\documentclass[a4paper,11pt]{article}

\usepackage{amsfonts,amsmath,amssymb,amsthm,epsfig,psfrag,verbatim}
\usepackage[all]{xy}
\usepackage[margin=1in,nomarginpar,nohead,nofoot,a4paper,footskip=1cm]{geometry}

\newtheorem{theorem}{Theorem}
\newtheorem*{theorem*}{Theorem}
\newtheorem{definition}{Definition}
\newtheorem{lemma}{Lemma}
\newtheorem*{lemma*}{Lemma}
\newtheorem{proposition}{Proposition}
\newtheorem*{proposition*}{Proposition}
\newtheorem{corollary}{Corollary}
\newtheorem{remark}{Remark}

\def\bx{\mathbf{x}}
\def\by{\mathbf{y}}
\def\bb{\mathbf{b}}
\def\bc{\mathbf{c}}
\def\bX{\mathbf{X}}
\def\bm{\mathbf{m}}
\def\bn{\mathbf{n}}
\def\PP{{\mathbb P}}
\def\EE{{\mathbb E}}
\def\N{{\mathbb N}}
\def\R{{\mathbb R}}
\def\ind{\mathbf{1}}
\def\d{\partial}
\def\ore{{\overrightarrow{e}}}
\def\ole{{\overleftarrow{e}}}
\def\orE{{\overrightarrow{E}}}
\def\ordv{{\overrightarrow{\partial v}}}
\def\oldv{{\overleftarrow{\partial v}}}
\def\orde{{\partial{\overrightarrow e}}}

\def\Pcal{{\mathcal P}}
\def\Rcal{{\mathcal R}}
\def\Dcal{{\mathcal D}}
\def\Qcal{{\mathcal Q}}
\def\Scal{{\mathcal S}}
\def\Gcal{{\mathcal G}}
\def\Ucal{{\mathcal U}}
\def\Fcal{{\mathcal F}}
\def\Hcal{{\mathcal H}}

\def\Mcal{{\mathcal M}}

\def\BGWcal{{\mathcal B\mathcal G\mathcal W}}
\def\bScal{{\mathbf{\mathcal S}}}

\def\lru{{\text{lr}\uparrow}}
\def\tPcal{\widetilde{\mathcal P}}
\def\Pcallc{{\mathcal P}_{\text{lc}}}
\def\tPcallc{\widetilde{\mathcal P}_{\text{lc}}}
\def\balpha{{\boldsymbol{\alpha}}}
\def\bbeta{{\boldsymbol{\beta}}}

\begin{document}
\title{Convergence of Multivariate Belief Propagation, with Applications to Cuckoo Hashing and Load Balancing}

\author{M. Leconte\\Technicolor - INRIA\\mathieu.leconte@inria.fr\and M. Lelarge\\INRIA - \'Ecole Normale Sup\'erieure\\marc.lelarge@ens.fr\and L. Massouli\'e\\Technicolor\\laurent.massoulie@technicolor.com}

\date{}

\maketitle
\begin{abstract}
This paper is motivated by two applications, namely i) generalizations of cuckoo hashing, a computationally simple approach to assigning keys to objects, and ii) load balancing in content distribution networks, where one is interested in determining the impact of content replication on performance.
These two problems admit a common abstraction: in both scenarios, performance is characterized by the maximum weight of a generalization of a matching in a bipartite graph, featuring node and edge capacities.

Our main result is a law of large numbers characterizing the asymptotic maximum weight matching in the limit of large bipartite random graphs, when the graphs admit a {\em local weak limit} that is a tree. This result specializes to the two application scenarios, yielding new results in both contexts.
In contrast with previous results, the key novelty is the ability to handle edge capacities with arbitrary integer values.

An analysis of belief propagation algorithms (BP) with multivariate belief vectors underlies the proof. In particular, we show convergence of the corresponding BP by exploiting monotonicity of the belief vectors with respect to the so-called {\em upshifted likelihood ratio} stochastic order. This auxiliary result can be of independent interest, providing a new set of structural conditions which ensure convergence of BP.

\end{abstract}

\newpage

\section{Introduction}

Belief Propagation (BP) is a popular message-passing algorithm for
determining approximate marginal distributions in Bayesian networks
\cite{pearl88} and statistical physics \cite{Mezard09} or for decoding
LDPC codes \cite{ricurb08}. The popularity of BP stems from its
successful application to very diverse contexts where it has been
observed to converge quickly to meaningful limits \cite{yfw05},
\cite{mmw05}. 
In contrast, relatively few theoretical results are available to prove
rigorously its convergence and uniqueness of its fixed points when the
underlying graph is not a tree \cite{bbcz07}.

In conjunction with the local weak convergence \cite{aldste}, BP has
also been used as an analytical tool to study combinatorial
optimization problems on random graphs: through a study of its fixed
points, one can determine so-called Recursive Distributional Equations
(RDE) associated with specific combinatorial problems. In turn, these
RDEs determine the asymptotic behaviour of solutions to the associated
combinatorial problems in the limit of large instances. Representative
results in this vein concern matchings \cite{bls11}, spanning
subgraphs with degree constraints \cite{Salez11} and orientability of random hypergraphs \cite{Lelarge12}. 

All these problems can be encoded with binary values on the edges of
the underlying graph and these contexts involve BP with scalar
messages. A key step in these results consists in showing monotonicity
of the BP message-passing routine with respect to the input
messages. As an auxiliary result, the analyses of \cite{Salez11} and
\cite{Lelarge12} provide structural monotonicity properties under
which BP is guaranteed to converge (when messages are scalar).

The present work is in line with \cite{Salez11}, \cite{Lelarge12}
and contributes to a rigorous formalization of the cavity method,
originating from statistical physics \cite{mpv87}, \cite{kmrsz}, and
applied here to a generalized matching problem \cite{lp09}. 
The initial motivation is the analysis of generalized matching problems in bipartite graphs with both edge and node capacities. This generic problem has several applications. In particular, it accurately models the service capacity of distributed content delivery networks under various content encoding scenarios, by letting nodes of the bipartite graph represent either contents or servers. It also models problem instances of cuckoo hashing, where in that context nodes represent either objects or keys to be matched. 

Previous studies of these two problems \cite{Lelarge12,Leconte12} essentially required unit edge capacities, which in turn ensured that the underlying BP involved only scalar messages. It is however necessary to go beyond such unit edge capacities to accurately model general server capacities and various content coding schemes in the distributed content delivery network case. The extension to general edge capacities is also interesting in the context of cuckoo hashing when keys can represent sets of addresses to be matched to objects (see Section~\ref{section: cuckoo hashing}).

Our main contribution is Theorem~\ref{th: maximum allocation for bipartite Galton-Watson limits}, a law of large numbers characterizing the asymptotic size of maximum size generalized matchings in random bipartite graphs in terms of RDEs. It is stated in Section~\ref{section: main result}. It is then applied to cuckoo hashing and distributed content delivery networks in Section~\ref{section: applications}, providing generalizations of the results in \cite{Lelarge12} and \cite{Leconte12} respectively. 

Besides obtaining these new laws of large numbers, our results also have algorithmic implications. Indeed to prove Theorem~\ref{th: maximum allocation for bipartite Galton-Watson limits}, in Section~\ref{section: overview} we state Proposition~\ref{prop: monotonicity of local operators}, giving simple continuity and monotonicity conditions on the message-passing routine of BP which guarantee its convergence to a unique fixed-point. This result is shown to apply in the present context for the so-called upshifted likelihood ratio stochastic order. Beyond its application to the present matching problem, this structural result might hold under other contexts, and with stochastic orders possibly distinct from the upshifted likelihood ratio order, to establish convergence of BP in the case of multivariate messages.

\section{Main result}\label{section: main result}
Let $G=(V,E)$ be a finite graph, with additionally an integer vertex-constraint $b_v$ attached to each vertex $v\in V$ and an integer edge-constraint $c_e$ attached to each edge $e\in E$. 

A vector $\bx=(x_e)_{e\in E}\in\N^E$ is called an \emph{allocation} of $G$ if
\begin{align*}
\forall e\in E,\:0\leq x_e\leq c_e\text{ and }\forall v\in V,\:\sum_{e\in\d V}x_e\leq b_v,
\end{align*}
where $\d v$ is the set of edges adjacent to $v$ in $G$. We also write $u\sim v$ when $uv\in E$.

For an allocation $\bx$ of $G$, we define the size $|\bx|$ of $\bx$ as  $|\bx|:=\sum_{e\in E}x_e$, and we denote by $M(G)$ the maximum size of an allocation of $G$. Our aim is to characterize the behaviour of $M(G)/|V|$ for large graphs $G$ in the form of a law of large numbers as $|V|$ goes to infinity.

We focus mainly on sequences of graphs $(G_n)_{n\in\N}$ which converge locally weakly towards Galton-Watson trees $G$. In short (we will explain more in detail later), what this convergence means is that, if we let $R_n$ be a vertex chosen uniformly at random in $G_n$, what $R_n$ sees within any finite graph distance $k$ looks more and more like the $k$-hop neighborhood of the root of a Galton-Watson tree as $n\to\infty$. Such a tree is characterized by a joint law $\Phi\sim(D,W,\{C_i\}_{i=1}^D)$ for respectively the degree, vertex-constraint and adjacent edge-constraints (counted with multiplicity) of the vertices of $G$. We always assume that the graphs are locally finite, i.e. $D<\infty$ a.s. 

To sample a Galton-Watson tree $G$, we first draw a sample from $\Phi$ for the root.
Then we construct at each dangling edge the missing vertex and its other adjacent edges (therefore maybe creating new dangling edges), until no dangling edge remains. Independently for each dangling edge and conditionally on its capacity $c_0$, we draw a sample $(\widetilde D,\widetilde W,\{\widetilde C_i\}_{i=1}^{\widetilde D}|c_0)\sim\widetilde\Phi(\cdot|c_0)$ for the number of other adjacent edges (not counting the dangling edge), the capacity of the vertex, and the other adjacent edge-constraints. Specifically, the distribution $\widetilde\Phi$ is given by $$\widetilde\Phi(\tilde d-1,\tilde b,\{\tilde c_1,\ldots,\tilde c_{\tilde d-1}\}|c_0)=\frac{\Phi(\tilde d,\tilde b,\{c_0,\tilde c_1,\ldots,\tilde c_{\tilde d-1}\})(1+\sum_{i=1}^{\tilde{d}-1}\ind(\tilde c_i=c_0))}{\sum_{(d,b,\{c_1,\ldots,c_{d-1}\})}\Phi(d,b,\{c_0,\ldots,c_{d-1}\})(1+\sum_{i=1}^{d-1}\ind(c_i=c_0))}.$$
The construction above can be extended to bipartite graphs $G=(A\cup B,E)$. In that case, there are two laws $\Phi^A$ and $\Phi^B$ for the characteristics $(D^A,W^A,\{C_i^A\}_{i=1}^{D^A})$  and $(D^B,W^B,\{C_i^B\}_{i=1}^{D^B})$ of vertices in $A$ and $B$ respectively. These verify the consistency relation for all edge capacities $c$:
$$
\frac{1}{\EE[D^A]}\EE\sum_{i=1}^{D^A}\ind(C_i^A=c)=\frac{1}{\EE[D^B]}\EE\sum_{i=1}^{D^B}\ind(C_i^B=c).
$$
The construction then alternates between $\widetilde\Phi^A$ and $\widetilde\Phi^B$ for vertices at even and odd distances from the root.

We define $[z]_x^y=\max\left\{x,\min\{y,z\}\right\}$. Our main result allows to compute the limit $\Mcal(\Phi^A,\Phi^B)$ of $M(G_n)/|A_n|$ when $(G_n)_{n\in\N}$ converges locally weakly towards a bipartite Galton-Watson tree $G=(A\cup B,E)$ defined by $\Phi^A$ and $\Phi^B$:
\begin{theorem}[Maximum allocation for bipartite Galton-Watson limits]\label{th: maximum allocation for bipartite Galton-Watson limits}
Provided $\EE[W^A]$ and $\EE[W^B]$ are finite, the limit $\Mcal(\Phi^A,\Phi^B):=\lim_{n\to\infty}M(G_n)/|A_n|$ exists and equals
$$
\begin{array}{ll}
\Mcal(\Phi^A,\Phi^B)=&\inf
\left\{\EE\left[\min\Bigg\{W^A,\sum_{i=1}^{D^A}X_i(C_i^A)\right\}\right]\\
&+\frac{\EE[D^A]}{\EE[D^B]}\EE\left[\left(W^B-\sum_{i=1}^{D^B}\left[W^B-\sum_{j\neq i}Y_j(C_j^B)\right]_0^{C_i^B}\right)^+\ind\left({W^B<\sum_{i=1}^{D^B}C_i^B}\right)\right]\Bigg\}
\end{array}
$$
where for all $i$, $\left(X_i(c),Y_i(c)\right)_{c\in\N}$ is an independent copy of $\left(X(c),Y(c)\right)_{c\in\N}$, and the infimum is taken over distributions for $\left(X(c),Y(c)\right)_{c\in\N}$ satisfying the RDE
\begin{align*}
Y(c)=\left\{\left[\widetilde W^A-\sum_{i=1}^{\widetilde D^A}X_i(\widetilde C_i^A)\right]_0^c\Bigg|C_0^A=c\right\}; X(c)=\left\{\left[\widetilde W^B-\sum_{i=1}^{\widetilde D^B}Y_i(\widetilde C_i^B)\right]_0^c\Bigg|C_0^B=c\right\}.
\end{align*}
\end{theorem}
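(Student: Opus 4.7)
The plan is to combine a BP/dynamic-programming analysis on the limiting tree, via Proposition~\ref{prop: monotonicity of local operators}, with local weak convergence of $G_n$ to $G$. I read the expression inside the infimum as an LP-dual bound for the allocation problem, parametrized by a pair of random BP messages $(X(c),Y(c))_{c\in\N}$, and I expect the bound to be tight precisely at a BP fixed point.

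First I would set up BP for the allocation problem on a finite tree $T$. For each oriented edge $e=(u,v)$, I introduce a vector-valued message $X_{uv}(c)$, defined for every integer $c\geq 0$ as the increment in the optimal allocation of the subtree rooted at $v$ (severed from $u$) when $e$ is granted capacity $c$ and receives arbitrary flow from $u$. A direct dynamic programming argument shows that these messages satisfy exactly the update rules of the RDE stated in the theorem, and that $M(T)$ can be written as a sum of local functionals matching the two terms of the formula: a vertex-saturation contribution at each $A$-vertex and a per-edge overcounting correction regrouped at $B$-vertices (the ratio $\EE[D^A]/\EE[D^B]$ converts the per-$B$-vertex average into the per-$A$-vertex normalization). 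Moreover, any message configuration obeying the update rule yields an upper bound via telescoping, and the BP messages realize equality; this is the tree version of the theorem.

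Next, to lift this to the infinite Galton-Watson tree $G$, I would invoke Proposition~\ref{prop: monotonicity of local operators}. The key preliminary step is to verify that the multivariate BP operator defined by the RDE is monotone and continuous for the upshifted likelihood ratio order: the capacity truncation $[\,\cdot\,]_0^c$ and the vertex saturation $\widetilde W - \sum_i X_i(\widetilde C_i)$ both preserve this order, which motivates its use in place of standard stochastic ordering. The proposition then delivers convergence of BP iterates to a unique fixed point distribution solving the RDE, and this fixed point attains the infimum.

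Finally, local weak convergence of $G_n$ to $G$ is used to transfer the formula to $G_n$. For the upper bound direction, any RDE solution $(X(c),Y(c))_{c\in\N}$ is used to build local dual certificates from $k$-hop neighborhoods in $G_n$; averaging over vertices and letting $k\to\infty$ gives $M(G_n)/|A_n|\leq$ (expression) $+o(1)$, hence the $\leq$ inequality. The matching lower bound requires turning the BP fixed point on the tree into a near-optimal allocation on $G_n$, exploiting that $G_n$ is locally tree-like and that the assumptions $\EE[W^A],\EE[W^B]<\infty$ control boundary defects. I expect this lower bound to be the main obstacle: short cycles and finite-neighborhood boundaries must contribute only $o(|A_n|)$, and this is where the uniqueness of the BP fixed point (again via Proposition~\ref{prop: monotonicity of local operators}) is critical, ensuring that the local BP computation around a typical vertex of $G_n$ stabilizes to the same limit independently of imposed boundary conditions as the neighborhood depth grows.
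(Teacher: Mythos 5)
Your proposal bypasses the central architectural device of the paper: the positive-temperature detour. You work directly at zero temperature, with deterministic messages defined as ``increments in the optimal allocation'' of subtrees, and invoke Proposition~\ref{prop: monotonicity of local operators} to claim convergence of BP to a unique fixed point solving the RDE. This fails at two levels. First, Proposition~\ref{prop: monotonicity of local operators} is stated for the operators $\Rcal_\ore$ and $\Dcal_v$ acting on \emph{distributions} in $\Pcal$, ordered by upshifted likelihood ratio; your messages $X_{uv}(c)$ are scalars (dynamic-programming values), and $\lru$ is not an order on scalars, so the proposition cannot be applied as you intend. Second, and more fundamentally, the zero-temperature fixed-point equation $\balpha = \Scal_G\circ\Scal_G(\balpha)$ does \emph{not} have a unique solution in general --- this is precisely why the theorem's formula is an infimum over RDE solutions rather than an evaluation at a fixed point. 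Uniqueness in the paper holds only at finite $\lambda$ (Proposition~\ref{prop: unique fixed-point in finite graphs}, proved via adjacent-sequences monotonicity combined with the strict increase of $\sum_v\Dcal_v$), and the $\lambda\to\infty$ limit (Proposition~\ref{prop: limit of $0$ temperature}) identifies the \emph{minimal} zero-temperature solution, which is then shown in Proposition~\ref{prop: BP estimate in finite graphs} to attain the infimum among all solutions. Collapsing this into a claim of zero-temperature uniqueness loses the structure that makes the argument work.

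The second gap is that you have no mechanism to prove the lower (hard) direction of the law of large numbers. You correctly identify it as the main obstacle, but the stated plan --- ``turn the BP fixed point on the tree into a near-optimal allocation on $G_n$, exploiting that $G_n$ is locally tree-like'' --- does not by itself control the boundary contribution: the allocation inside a depth-$k$ ball is coupled to the exterior through the vertex constraints, and at zero temperature the lack of uniqueness makes the limiting boundary condition ambiguous. The paper resolves this by (i) working with the Gibbs measure at finite $\lambda$ where marginals are continuous in the boundary condition and the BP fixed point is unique, (ii) passing to the limit $n\to\infty$ via local weak convergence and the Mass-Transport Principle on unimodular graphs (Propositions~\ref{prop: unique fixed-point in unimodular graphs}--\ref{prop: BP estimate in unimodular random graphs}), and then (iii) justifying the interchange of the limits in $n$ and $\lambda$ (Proposition~\ref{prop: asymptotic correctness for unimodular random trees}). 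Step (iii) is the load-bearing element your plan omits entirely; without it, or some replacement of comparable strength, the matching lower bound does not follow.
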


\begin{remark}
A similar result holds when the graphs are not bipartite; the limiting tree is then simply a Galton-Watson tree described by a joint distribution $\Phi$. We set $\Phi^A=\Phi^B=\Phi$, and the formula in Theorem~\ref{th: maximum allocation for bipartite Galton-Watson limits} then computes $\lim_{n\to\infty}\frac{2M(G_n)}{|V_n|}=\Mcal(\Phi,\Phi)$.
\end{remark}

\section{Applications}\label{section: applications}

We now apply Theorem~\ref{th: maximum allocation for bipartite Galton-Watson limits} to performance analysis of generalized cuckoo hashing and distributed content-delivery networks.

\subsection{Cuckoo hashing and hypergraph orientability}\label{section: cuckoo hashing}
Cuckoo hashing is a simple approach for assigning keys (hashes) to items. Given an initial collection of $n$ keys, each item is proposed upon arrival two keys chosen at random and must select one of them. Depending on the number $m$ of items and the random choices offered to each item, it may or may not be possible to find such an assignement of items to keys. In the basic scenario, it turns out that such an assignement will be possible with probability tending to $1$ as $m,n\to\infty$ for all $m=\lfloor\tau n\rfloor$ with $\tau<\frac{1}{2}$.

The basic problem can be extended in the following meaningful ways:
\begin{itemize}
\item each item can choose among $h\geq2$ random keys \cite{Dietzfelbinger10,Fountoulakis11,Frieze09};
\item each key can hold a maximum of $k$ items  \cite{Dietzfelbinger05,Cain07,Fernholz07};
\item each item must be replicated at least $l$ times \cite{Gao10,Lelarge12};
\item each (item,key) pair can be used a maximum of $r$ times (not covered previously)
\end{itemize}
the basic setup corresponding to $(h,k,l,r)=(2,1,1,1)$.
We let $\tau^*_{h,k,l,r}$ be the associated threshold, i.e. if $m=\lfloor\tau n\rfloor$ with $\tau<\tau^*_{h,k,l,r}$ then an assignement of items to keys satisfying the conditions above will exist with probability tending to $1$ as $m,n\to\infty$; on the contrary, if $\tau>\tau^*_{h,k,l,r}$, then the probability that such an assignement exists will tend to $0$ as $m,n\to\infty$.

An alternative description of the present setup consists in the following hypergraph orientation problem. For $h\in\N^*$, a $h$-uniform hypergraph is a hypergraph whose hyperedges all have size $h$. We assign marks in $\{0,\ldots,r\}$ to each of the endpoints of a hyperedge. For $l<h$ in $\N^*$, a hyperedge is said to be $(l,r)$-oriented if the sum of the marks at its endpoints is equal to $l$. The in-degree of a vertex of the hypergraph is the sum of the marks assigned to it in all its adjacent hyperedges. For a positive integer $k$, a $(k,l,r)$-orientation of a $h$-uniform hypergraph is an assignement of marks to all endpoints of all hyperedges such that every hyperedge is $(l,r)$-oriented and every vertex has in-degree at most $k$; if such a $(k,l,r)$-orientation exists, we say that the hypergraph is $(k,l,r)$-orientable. We now consider the probability space $\Hcal_{n,m,h}$ of the set of all $h$-uniform hypergraphs with $n$ vertices and $m$ hyperedges, and we denote by $H_{n,m,h}$ a random sample from $\Hcal_{n,m,h}$. In this context, we can interpret Theorem~\ref{th: maximum allocation for bipartite Galton-Watson limits} as follows:

\begin{theorem}[Threshold for $(k,l,r)$-orientability of $h$-uniform hypergraphs]\label{th: application to cuckoo hashing}
Let $h,k,l,r$ be positive integers such that $k,l\geq r$, $(h-1)r\geq l$ and $k+(h-2)r-l>0$ (i.e. at least one of the inequalities among $k\geq r$ and $(h-1)r\geq l$ is strict). We define $\Phi^A$ and $\Phi^B_\tau$ by $(h,l,\{r\})\sim\Phi^A$ and $(\operatorname{Poi}(\tau h),k,\{r\})\sim\Phi^B_\tau$, and
\begin{align*}
\tau^*_{h,k,l,r}=\sup\left\{\tau:\Mcal(\Phi^A,\Phi^B_\tau)<l\right\}.
\end{align*}

Then,
\begin{align*}
\lim_{n\to\infty}\PP\left(H_{n,\lfloor\tau n\rfloor,h}\text{ is }(k,l,r)\text{-orientable}\right)=\left\{\begin{array}{ll}
 1\text{ if }\tau<\tau^*_{h,k,l,r}\\
 0\text{ if }\tau>\tau^*_{h,k,l,r}
\end{array}\right.
\end{align*}
\end{theorem}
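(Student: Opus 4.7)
The plan is to encode the $(k,l,r)$-hypergraph-orientation problem as an allocation problem on a bipartite graph and then invoke Theorem~\ref{th: maximum allocation for bipartite Galton-Watson limits}. Given a $h$-uniform hypergraph $H=(V,F)$, I form the bipartite graph $G=(A\cup B,E)$ with $A:=F$, $B:=V$, $E:=\{(f,v):v\in f\}$, and assign edge-constraint $r$ to every edge, vertex-constraint $l$ to each $f\in A$, and vertex-constraint $k$ to each $v\in B$. An allocation $\bx$ of $G$ that saturates every $A$-vertex (i.e.\ $\sum_{e\in\d f}x_e=l$ for all $f\in A$) is precisely a $(k,l,r)$-orientation of $H$, so $H$ is $(k,l,r)$-orientable if and only if $M(G)=l\,|A|$.

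Next I check that when $H=H_{n,\lfloor\tau n\rfloor,h}$ the associated bipartite graph $G_n$ converges locally weakly to the bipartite Galton--Watson tree specified by $\Phi^A$ and $\Phi^B_\tau$. Every $A$-vertex has deterministic degree $h$ with all its incident edge-constraints equal to $r$, matching $\Phi^A=(h,l,\{r\})$. A uniformly chosen $B$-vertex appears in $\operatorname{Bin}(\lfloor\tau n\rfloor,h/n)$ hyperedges, which converges in distribution to $\operatorname{Poi}(\tau h)$, matching $\Phi^B_\tau$. The consistency relation on edge capacities is trivial since all edge-constraints equal $r$, and local tree-likeness is standard because the expected number of short cycles in $H_{n,\lfloor\tau n\rfloor,h}$ stays bounded as $n\to\infty$. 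Since $W^A=l$ and $W^B=k$ are bounded, Theorem~\ref{th: maximum allocation for bipartite Galton-Watson limits} applies and yields $M(G_n)/|A_n|\to\Mcal(\Phi^A,\Phi^B_\tau)$ in probability.

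From the deterministic inequality $M(G_n)\leq l\,|A_n|$, the ``non-orientability'' direction is immediate: whenever $\Mcal(\Phi^A,\Phi^B_\tau)<l$ there is a linear-size deficit with high probability, so $H_{n,\lfloor\tau n\rfloor,h}$ cannot be $(k,l,r)$-oriented. The ``orientability'' direction, where $\Mcal(\Phi^A,\Phi^B_\tau)=l$ must imply $\PP(\text{orientable})\to 1$, is the main obstacle: the law of large numbers for $M(G_n)/|A_n|$ only rules out a linear-size deficit, whereas orientability demands the pointwise equality $M(G_n)=l\,|A_n|$, and a sublinear residual gap is a priori compatible with the limit theorem.

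To bridge this gap I would combine monotonicity of the orientability event in the number of hyperedges with a sharp-threshold argument: by Bollob\'as--Thomason (or the Friedgut--Kalai sharp-threshold theorem, exploiting the symmetry of $\Hcal_{n,m,h}$) the property admits a threshold sequence $\tau_n$ across which $\PP(\text{orientable})$ jumps from $o(1)$ to $1-o(1)$, and Theorem~\ref{th: maximum allocation for bipartite Galton-Watson limits} forces $\tau_n\to\tau^*_{h,k,l,r}$ since any other limit value would contradict the LLN on one side. An alternative, more constructive route is to exhibit a local orientation procedure on the limit Galton--Watson tree that succeeds whenever the BP fixed point attains $\Mcal=l$, and to transfer it back to $G_n$ via local weak convergence, repairing the $o(|A_n|)$ residual defects with short augmenting/flipping chains; this is precisely the step where the technical conditions $k,l\geq r$, $(h-1)r\geq l$ and $k+(h-2)r-l>0$ come in, to guarantee that such flips exist and do not cascade. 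A routine by-product of the RDE analysis is that $\Mcal(\Phi^A,\Phi^B_\tau)$ is monotone and continuous in $\tau$, ensuring $\tau^*_{h,k,l,r}$ is a well-defined sharp transition.
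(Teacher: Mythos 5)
Your reduction to the allocation problem and your handling of the non-orientability direction are correct and identical to the paper's. The real content of the theorem is the other direction: upgrading the law-of-large-numbers conclusion $M(G_n)=l\,|A_n|-o(n)$ to the exact equality $M(G_n)=l\,|A_n|$ with high probability when $\tau<\tau^*_{h,k,l,r}$, and it is here that your proposal has a genuine gap.

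The sharp-threshold route does not close this gap. Bollob\'as--Thomason or Friedgut--Kalai would at best yield an abstract threshold sequence $(\tau_n)$, and the LLN (non-orientability side) gives $\limsup\tau_n\le\tau^*_{h,k,l,r}$. But the missing inequality $\liminf\tau_n\ge\tau^*_{h,k,l,r}$ is precisely the statement you are trying to prove: the limit $M(G_n)/|A_n|\to l$ for $\tau<\tau^*_{h,k,l,r}$ is perfectly consistent with the orientability event failing w.h.p.\ throughout an interval $(\tilde\tau,\tau^*_{h,k,l,r})$ with a positive but sublinear deficit. So existence of a sharp threshold together with the LLN does not locate the threshold; the argument is circular.

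Your second, ``constructive'' route is the right one and is what the paper actually does, but the proposal leaves out all of the content. The paper fixes $\tilde\tau$ with $\tau<\tilde\tau<\tau^*_{h,k,l,r}$, takes a maximum allocation of the graph $\tilde G_n$ built from $H_{n,\tilde p,h}$, and for each unsaturated hyperedge $v$ builds a subgraph $K(v)$ consisting of all vertices and edges reachable from $v$ by paths alternating between unsaturated edges and edges carrying positive weight. The three hypotheses are used precisely to show that $K(v)$ is \emph{dense}: $k,l\ge r$ and $(h-1)r\ge l$ force every vertex of $K(v)$ to have degree at least $2$, and $k+(h-2)r-l>0$ forces the weight $\tilde x_{e_{2i+1}}$ along any alternating path to strictly decrease whenever two consecutive vertices have degree $2$, so that among every $2r$ consecutive vertices one has degree at least $3$. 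This yields an edge-to-vertex ratio in $K(v)$ at least $(1-\tfrac{1}{6r})^{-1}>1$, and a standard sparse random graph estimate then forces $K(v)$ to occupy a linear fraction of $\tilde A_n$ --- so in particular the augmenting structures are not ``short,'' contrary to what you suggest. Finally one couples $H_{n,\tilde p,h}$ to $H_{n,p,h}$ by deleting hyperedges one at a time: each deletion falls inside some $K(v)$ with probability bounded away from zero and strictly reduces the deficit, and since $\Theta(n)$ hyperedges are deleted while the deficit is $o(n)$, the deficit reaches zero w.h.p. None of this --- the role of each inequality, the density computation, or the deletion coupling --- appears in your proposal, so the orientability direction is essentially asserted rather than proved.
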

This result extends those from \cite{Lelarge12}, where the value of the threshold $\tau^*_{h,k,l,1}$ was computed.

\subsection{Distributed content delivery network}\label{section: content delivery network}
Consider a content delivery network (CDN) in which service can be given either from a powerful but costly data center, or from a large number of small, inexpensive servers. Content requests are then served if possible by the small servers and otherwise redirected to the datacenter.
One is then interested in determining the fraction of load that can be absorbed by the small servers. A natural asymptotic to consider is that of large number $m$ of small servers with fixed storage and service capacity and large collection $n$ of content items.


The precise model we consider follows the statistical assumptions from \cite{Leconte12}. It is described by a bipartite graph $G=(A\cup B,E)$, where $A$ is the set of servers and $B$ the set of contents, $|A|\sim|B|\tau$. An edge in $E$ between a server $s$ in $A$ and a content $c$ in $B$ indicates that server $s$ stores a copy of content $c$ and is thus able to serve requests for it.

An assignement of servers to requests corresponds exactly to an allocation of $G$ provided the vertex-constraint at server $s$ is its upload capacity, the vertex-constraint at content $c$ is its number of requests $\omega_c$, and the edge-constraint is $\infty$. Thus, $M(G)$ is the maximum number of requests absorbed by the small servers. Assuming $\Phi^A$ is the distribution of storage and upload capacity of the servers and $\Phi^B$  the distribution of number of replicas and requests of the contents, then $\Mcal(\Phi^A,\Phi^B)$ computed from Theorem~\ref{th: maximum allocation for bipartite Galton-Watson limits} is the asymptotic maximum load absorbed by the servers (in number of requests per server). This represents a generalization of the results in \cite{Leconte12} which handled only servers with unit service capacity, while our result applies to any capacity distribution with finite mean.

Furthermore, the addition of edge capacities also allows us to model more complex cases. Suppose that all contents may have unequal sizes, say the size of a randomly chosen content is a random variable $L$, and that each content is fragmented into segments of constant unit size. The storage and upload capacity of the servers is then measured in terms of size rather than number of contents, and the servers now choose which content and also which segment they store. 

Assume further that when a server chooses to cache a segment from content $c$, instead of storing the raw segment it instead stores a random linear combination of all the $l_c$ segments corresponding to content $c$. Then, when a user requests content $c$ it needs only download a coded segment from any $l_c$ servers storing segments from $c$, as any $l_c$ coded segments are sufficient to recover the content $c$. An assignement of servers to requests still corresponds to an allocation of $G$, with the vertex-constraints at servers unchanged, the vertex-constraints at content $c$ equal to $\omega_c l_c$ and the edge-constraints linked to a content $c$ equal to $\omega_c$. Indeed a given encoded segment can be used only once per request of the corresponding content. Then, letting $\Phi^A$ and $\Phi^B$ be the appropriate joint laws, $\Mcal(\Phi^A,\Phi^B)$ is the asymptotic maximum absorbed load (in number of fragments per server).

One could then follow the same path as in \cite{Leconte12} and determine the replication ratios of  contents based on a priori knowledge about  their number of requests so as to maximize the load asymptotically absorbed by the server pool; this is beyond the cope of the present paper.

\section{Main Proof Elements}\label{section: overview}
We start with a high level description of this section.
The proof strategy uses a detour, by introducing a finite parameter $\lambda>0$ playing the role of an inverse temperature. For a given finite graph $G$, a Gibbs distribution $\mu^{\lambda}_G$ is defined on edge occupancy parameters $\bx$ (Section \ref{sec:gibbs}) such that an average under $\mu_G^{\lambda}$ approaches the quantity of interest $M(G)/|V|$ as $\lambda$ tends to infinity. Instead of considering directly the limit of this parameter over a series of converging graphs $G_n$, we take an indirect route, changing the order of limits over $\lambda$ and $n$. 

We thus first determine for fixed $\lambda$ the asymptotics in $n$ of averages under $\mu^{\lambda}_{G_n}$. This is where BP comes into play. We characterize the behaviour of BP associated with $\mu^{\lambda}_G$ on finite $G$ (Section~\ref{sec:msg}), establishing its convergence to a unique fixed point thanks to structural properties of monotonicity for the upshifted likelihood ratio order, and of log-concavity of messages (Sections~\ref{sec:local} and \ref{sec:finite}). This allows to show that limits over $n$ of averages under $\mu^{\lambda}_{G_n}$ are characterized by fixed point relations {\`a la} BP. Taking limits over $\lambda\to \infty$, one derives from these fixed points the RDEs appearing in the statement of Theorem 1. It then remains to justify interchange of limits in $\lambda$ and $n$. These last three steps are handled similarly to \cite{Lelarge12} (see appendix).

Before we proceed we introduce some necessary notation. Letters or symbols in bold such as $\bx$ denote collections of objects $(x_i)_{i\in I}$ for some set $I$. For a subset $S$ of $I$, $\bx_S$ is the sub-collection $(x_i)_{i\in S}$ and $|\bx_S|:=\sum_{i\in S}x_i$ is the $L_1$-norm of $\bx_S$. Inequalities between collections of items should be understood componentwise, thus $\bx\leq\bc$ means $x_i\leq c_i$ for all $i\in I$. 
For distributions $m_i$, we let $\bm_S(\bx):=\prod_{i\in S}m_i(x_i)$. When summing such terms as in $\sum_{\bx\in\N^S:|\bx|\leq b,\:\bx\leq\bc}\bm_S(\bx)$, we shall omit the constraint $\bx\in\N^S$. Similarly, we let $\ast_S\bm=\ast_{i\in S}m_i$.

\subsection{Gibbs measure}\label{sec:gibbs}
Let $G=(V,E)$ be a finite graph, with collections of vertex- and edge-constraints $\bb=(b_v)_{v\in V}$ and $\bc=(c_e)_{e\in E}$). The Gibbs measure at temperature parameter $\lambda\in\R_+$ on the set of all vectors in $\N^E$ is then defined, for $\bx\in\N^E$, as
\begin{align*}
\mu_G^\lambda(\bx)
=\frac{1}{Z_G(\lambda)}\lambda^{|\bx|}\ind(\bx\text{ allocation of }G)
=\frac{1}{Z_G(\lambda)}\lambda^{|\bx|}\prod_{v\in V}\ind(\sum_{e\in\d v}x_e\leq b_v)\prod_{e\in E}\ind(x_e\leq c_e),
\end{align*}
where $Z_G(\lambda)$ is a normalization factor.

When $\lambda\to\infty$, $\mu_G^\lambda$ tends to the uniform probability measure on the set of all allocations of $G$ of maximum size. Thus, $\lim_{\lambda\to\infty}\mu_G(|\bX|)=M(G)$, where $\mu_G^\lambda(|\bX|)$ is the expected size of a random allocation $\bX$ drawn according to $\mu_G^\lambda$. Hence, we can compute $M(G)/|V|$ as follows:
\begin{eqnarray}
\frac{M(G)}{|V|}
=\lim_{\lambda\to\infty}\mu_G^\lambda\left(\sum_{v\in V}\frac{1}{|V|}\frac{\sum_{e\in\d v}X_e}{2}\right)
=\frac{1}{2}\lim_{\lambda\to\infty}\EE\left[\mu_G^\lambda\left(\sum_{e\in\d R}X_e\right)\right],\label{eqn: why marginals?}
\end{eqnarray}
where $R$ is a root-vertex chosen uniformly at random among all vertices in $V$, and the first expectation is with respect to the choice of $R$.

\subsection{Associated BP message passing}\label{sec:msg} 
We introduce the set $\orE$ of directed edges of $G$ comprising two directed edges $\overrightarrow{uv}$ and $\overrightarrow{vu}$ for each undirected edge $uv\in E$. We also define $\ordv$ as the set of edges directed towards vertex $v\in V$, $\oldv$ as the set of  edges directed  outwards from $v$, and $\orde:=(\overrightarrow{wv})_{w\in\d v\setminus u}$ if $\ore$ is the directed edge $\overrightarrow{vu}$.

An allocation puts an integer weight on each edge of the graph. Accordingly the messages to be sent along each edge are distributions over the integers. We let $\Pcal$ be the set of all probability distributions on integers with bounded support, i.e.
\begin{align*}
\Pcal=\left\{p\in[0,1]^\N;\sum_{i\in\N}p(i)=1\text{ and }\exists k\in\N\text{ such that }p(i)=0,\forall i>k\right\},
\end{align*}
and $\tPcal$ the set of distributions in $\Pcal$ whose support is an interval containing $0$. 

A message on directed edge $\ore$  with capacity $c_e$ is a distribution in $\Pcal$ with support in $\{0,\ldots,c_e\}$.
The message to send on edge $\ore$ outgoing from vertex $v$ is computed from the messages incoming to $v$ on the other edges via
\begin{align*}
\Rcal_\ore^{(\lambda)}[\bm](x):=\frac{\lambda^x\ind(x\leq c_{vu})\sum_{|\by|\leq b_v-x}\bm_{\d\overrightarrow{vu}}^\lambda(\by)}{\sum_{t\leq c_{vu}}\lambda^t\sum_{|\by|\leq b_v-t}\bm_{\d\overrightarrow{vu}}^\lambda(\by)},
\end{align*}
where we introduced the operator $\Rcal_\ore^{(\lambda)}:\tPcal^\orde\to\tPcal$.
For notational convenience, we write $\Rcal_\ore^{(\lambda)}[\bm]$ instead of $\Rcal_\ore^{(\lambda)}[\bm_\orde]$. We also introduce $\Rcal_\ore$ for $\Rcal_\ore^{(1)}$. The two operators are linked via the relationship
\begin{align*}
\Rcal_\ore^{(\lambda)}[\bm](x)=\frac{\lambda^x\Rcal_\ore[\bm](x)}{\sum_{t\ge 0}\lambda^t\Rcal_\ore[\bm](t)}.
\end{align*}

We also define an operator $\Dcal_v:\tPcal^\ordv\to\R^+$ meant to approximate the average occupancy at a vertex $v$ under $\mu_G^\lambda$ from the messages incoming to $v$:
\begin{align*}
\Dcal_v[\bm]=\frac{\sum_{|\bx|\leq b_v}|\bx|\bm_\ordv(\bx)}{\sum_{|\bx|\leq b_v}\bm_\ordv(\bx)}.
\end{align*}

Finally we denote by $\Rcal_G^{(\lambda)}$ the operator that performs the action of all the $\Rcal_\ore^{(\lambda)}$ for all $\ore$ simultaneously, i.e. $\Rcal_G^{(\lambda)}[\bm]=\left(\Rcal_\ore^{(\lambda)}[\bm]\right)_{\ore\in\orE}$ (the same type of notation will be used for other operators). It is well known that belief propagation converges and is exact on finite trees \cite{Mezard09}:
\begin{proposition}
In a finite tree $G$, the fixed point equation $\bm=\Rcal_G^{(\lambda)}[\bm]$ admits a unique solution $\bm^{(\lambda)}\in\tPcal^\orE$, and it satisfies for every vertex $v$:
\begin{align*}
\mu_G^\lambda\left(\sum_{e\in\d v}X_e\right)=\Dcal_v[\bm^{(\lambda)}].
\end{align*}
\end{proposition}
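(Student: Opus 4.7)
The proof naturally splits into two parts: existence and uniqueness of $\bm^{(\lambda)}$, and the probabilistic identity $\mu_G^\lambda\bigl(\sum_{e\in\d v}X_e\bigr)=\Dcal_v[\bm^{(\lambda)}]$. Both rely on a leaf-to-root recursion exploiting the tree structure of $G$.

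For each directed edge $\ore=\overrightarrow{vu}$, let $T_\ore$ denote the subtree of $G$ on the $v$-side of the edge $vu$, and let $h(\ore)$ be its height. When $h(\ore)=0$, vertex $v$ is a leaf and $\orde=\emptyset$, so the fixed-point equation forces $m_\ore^{(\lambda)}=\Rcal_\ore^{(\lambda)}[\emptyset]$, which is the explicit distribution on $\{0,\dots,\min(b_v,c_{vu})\}$ proportional to $\lambda^x$, and it lies in $\tPcal$. Inductively, if all messages on edges of height $<h$ are uniquely determined and in $\tPcal$, then for $\ore$ of height $h$ the inputs $\bm_\orde$ are already fixed and the denominator of $\Rcal_\ore^{(\lambda)}$ is strictly positive (the all-zero allocation is always admissible), so $m_\ore^{(\lambda)}$ is uniquely determined and also lies in $\tPcal$. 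Since $G$ is finite the induction terminates, producing the unique fixed point.

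For correctness, fix $v\in V$ and, for each neighbor $w$, let $T_{\overrightarrow{wv}}$ be the subtree of $G$ on the $w$-side of $vw$ and let $Z_{\overrightarrow{wv}}(x)$ denote its \emph{cavity} partition function, in which the vertex constraint at $w$ is tightened from $b_w$ to $b_w-x$ (reserving $x$ units for the edge $vw$). Since the subtrees hanging off $v$ are vertex- and edge-disjoint, the Gibbs distribution factorizes as
\[
\mu_G^\lambda(\bX_{\d v}=\by)\propto\ind(|\by|\leq b_v)\prod_{w\sim v}\lambda^{y_w}\ind(y_w\leq c_{vw})\,Z_{\overrightarrow{wv}}(y_w).
\]
Define $\widetilde m_{\overrightarrow{wv}}(x)$ by normalizing $\lambda^x\ind(x\leq c_{vw})Z_{\overrightarrow{wv}}(x)$ to sum to one. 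An induction on $h(\overrightarrow{wv})$, obtained by expanding $Z_{\overrightarrow{wv}}(x)$ as a sum over the values of the edges at $w$ other than $vw$ and then factoring over the further subtrees, shows that $\widetilde\bm$ satisfies the BP fixed-point equation, so by uniqueness $\widetilde\bm=\bm^{(\lambda)}$. Substituting this back into the factorization gives
\[
\mu_G^\lambda(\bX_{\d v}=\by)=\frac{\ind(|\by|\leq b_v)\,\bm^{(\lambda)}_\ordv(\by)}{\sum_{|\bx|\leq b_v}\bm^{(\lambda)}_\ordv(\bx)},
\]
and taking the expectation of $|\bX_{\d v}|$ recovers precisely $\Dcal_v[\bm^{(\lambda)}]$. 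The main---and essentially only---delicate point is the bookkeeping around the cavity partition functions: one has to verify that, after absorbing the per-subtree normalizations into $\widetilde\bm$, the remaining combinatorial factor in the recursion for $Z_{\overrightarrow{wv}}$ matches the explicit formula for $\Rcal_\ore^{(\lambda)}$ term by term.
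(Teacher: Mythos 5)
The paper does not actually prove this proposition; it invokes it as a well-known fact with a citation to M\'ezard--Montanari, so there is no ``paper proof'' to compare against. Your argument is a correct and complete rendering of the standard tree argument: existence and uniqueness by a leaf-to-root induction on the height $h(\ore)$ of the subtree $T_\ore$ hanging off the tail of $\ore$ (the base case $\orde=\emptyset$ giving the explicit truncated geometric, the inductive step using that the denominator of $\Rcal_\ore^{(\lambda)}$ is positive because all incoming messages lie in $\tPcal$ and hence charge $0$), followed by the cavity identification $\widetilde m_{\overrightarrow{wv}}(x)\propto \lambda^x\ind(x\le c_{vw})Z_{\overrightarrow{wv}}(x)$ and the observation that disjointness of the subtrees hanging off $v$ factorizes $\mu_G^\lambda(\bX_{\partial v}=\cdot)$ into $\ind(|\by|\le b_v)\bm_\ordv^{(\lambda)}(\by)$ after normalization, from which $\Dcal_v[\bm^{(\lambda)}]$ is exactly the mean of $|\bX_{\partial v}|$. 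One small point you glossed over but should make explicit before invoking uniqueness: you need $\widetilde\bm\in\tPcal^\orE$, which holds because $Z_{\overrightarrow{wv}}(0)\ge 1$ (the all-zero allocation) and $x\mapsto Z_{\overrightarrow{wv}}(x)$ is nonincreasing, so each $\widetilde m_{\overrightarrow{wv}}$ has interval support containing $0$. With that remark included, the argument is complete.
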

However, to be able to take the limit as the temperature parameter $\lambda$ goes to infinity as well as to deal with cases when $G$ is not a tree anymore, we need to study further the operators $\Rcal_\ore$ and $\Dcal_v$, which we term the \emph{local} operators.

\subsection{Structural properties of local operators}\label{sec:local} 
In this section, we focus on the one-hop neighborhood of a vertex $v$ of a graph $G$, i.e. on vertex $v$ and its set $\d v$ of adjacent edges. We thus only consider the directed edges in $\ordv\cup\oldv$. We let $b_v$ be the vertex-constraint at $v$ and $\bc=(c_e)_{e\in\d v}$ be the collection of the edge-constraints on the edges in $\d v$.

Among the many stochastic orders studied for comparing distributions (see e.g. \cite{Muller02}), the one adapted to the structure of operators $\Rcal_\ore$ and $\Dcal_v$ is the so-called \emph{upshifted likelihood-ratio} stochastic order (abbreviated $\lru$). For two distributions $m$ and $m'$ in $\Pcal$, we say that $m$ is smaller than $m'$ (for the $\lru$ stochastic order) and we write
\begin{align*}
m\leq_\lru m'\text{ if }m(i+k+l)m'(i)\leq m(i+l)m'(i+k),\forall i,k,l\in\N.
\end{align*}

In particular, if $m$ and $m'$ have the same interval as support, we have $m\leq_\lru m'\Leftrightarrow \frac{m(i+1)}{m(i)}\leq\frac{m'(i+1)}{m'(i)}$, for all $i$ for which the denominators are non-zero. 

We shall also need the following definition. A distribution $(p_j)_{j\ge 0}$ is {\bf log-concave} if its support is an interval and $p_i p_{i+2}\leq p_{i+1}^2$, for all $i\in\N$.  This property has strong ties with the $\lru$-order. In particular one can note that $p$ is  log-concave if and only if $p\leq_\lru p$. We let $\Pcallc\subset\Pcal$ be the set of all log-concave distributions over integers with finite support, and $\tPcallc=\tPcal\cap\Pcallc$:
\begin{align*}
\Pcallc=\left\{p\in[0,1]^\N;\sum_{i\in\N}p(i)=1,\:p\text{ is log-concave, and }\exists k\in\N\text{ such that }p(i)=0,\forall i>k\right\}.
\end{align*}
The key result of this Section is then the following:
\begin{proposition}[Monotonicity of the local operators for the $\lru$-order]\label{prop: monotonicity of local operators}
The operator $\Rcal_\ore^{\lambda}$ is non-increasing; furthermore, if the inputs of $\Rcal_\ore^{\lambda}$ are log-concave, then the output is also log-concave. The operator $\Dcal_v^{\lambda}$ is non-decreasing, and strictly increasing if all its inputs are log-concave with $0$ in their support.
\end{proposition}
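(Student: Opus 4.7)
The plan is to reduce both claims to properties of the convolution $m_\star := \ast_{f \in \d\ore} m_f$ of the input messages and its CDF $G_p(s) := \sum_{y \le s} p(y)$. Using the identity in the excerpt, one rewrites
\[
\Rcal_\ore^{(\lambda)}[\bm](x) \;\propto\; \lambda^x\, \ind(x \le c_{vu})\, G_{m_\star}(b_v - x), \qquad \Dcal_v[\bm] = \EE[Y \mid Y \le b_v] \text{ for } Y \sim m_\star,
\]
so both operators depend on the inputs only through $m_\star$. I will rely on two classical facts about the upshifted likelihood ratio order (see e.g.\ Shaked--Shanthikumar): \textbf{(a)} convolution preserves $\le_\lru$, so $\bm \le_\lru \bm'$ componentwise implies $m_\star \le_\lru m'_\star$, and \textbf{(b)} $\le_\lru$ implies the likelihood-ratio order $\le_{lr}$ (take $l = 0$ in the definition).

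For monotonicity of $\Rcal_\ore^{(\lambda)}$, I expand the required inequality $\Rcal_\ore^{(\lambda)}[\bm'](i{+}k{+}l)\,\Rcal_\ore^{(\lambda)}[\bm](i) \le \Rcal_\ore^{(\lambda)}[\bm'](i{+}l)\,\Rcal_\ore^{(\lambda)}[\bm](i{+}k)$ using the formula above: the $\lambda$-factors cancel and the indicators are harmless on the support, so the claim reduces (with $s = b_v - i$) to $G_{m'_\star}(s{-}k{-}l)\,G_{m_\star}(s) \le G_{m'_\star}(s{-}l)\,G_{m_\star}(s{-}k)$. By \textbf{(a)}, this will follow from the key stand-alone lemma: \emph{if $p \le_\lru p'$, then for all $s, t \ge 0$ the ratio $G_p(s+t)/G_{p'}(s)$ is non-increasing in $s$}. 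That amounts to $p(s{+}1{+}t)\,G_{p'}(s) \le p'(s{+}1)\,G_p(s{+}t)$, which I would prove by applying the $\lru$ inequality $p(s{+}1{+}t)\,p'(j) \le p(j{+}t)\,p'(s{+}1)$ (valid for $j \le s+1$) and summing over $j \in \{0,\dots,s\}$, using $\sum_{j \le s} p(j{+}t) \le G_p(s{+}t)$.

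For log-concavity of $\Rcal_\ore^{(\lambda)}[\bm]$ when each $m_f \in \tPcallc$, first $m_\star$ is log-concave by Hoggar's theorem on convolutions of log-concave integer sequences. Then $G_{m_\star}$ is log-concave: writing $G(s{+}1)G(s{-}1) \le G(s)^2$ as $G_p(s)\,p(s{+}1) \le p(s)\,G_p(s{+}1)$ (with $p = m_\star$), this follows by summing the log-concavity inequalities $p(j)\,p(s{+}1) \le p(j{+}1)\,p(s)$ over $j \le s$. Log-concavity is preserved by reflection ($x \mapsto b_v - x$), by multiplication with the log-affine factor $\lambda^x$, and by restriction to an interval of the support (here $\{0,\ldots,\min(c_{vu},b_v)\}$, an interval containing $0$). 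This gives $\Rcal_\ore^{(\lambda)}[\bm]$ log-concave.

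For monotonicity of $\Dcal_v$, by \textbf{(a)}--\textbf{(b)} one has $m_\star \le_{lr} m'_\star$; since $\le_{lr}$ is preserved under conditioning on $\{Y \le b_v\}$, the conditional laws are LR-ordered, hence stochastically ordered, so $\Dcal_v[\bm] \le \Dcal_v[\bm']$. For strict monotonicity: the log-concave-with-$0$-in-support hypothesis forces $m_\star$ and $m'_\star$ to be log-concave with support an interval $\{0,\ldots,N\}$; if $\bm \ne \bm'$ then the componentwise $\lru$-comparison is strict somewhere, producing a strict LR-inequality at some point of the common support, which survives conditioning on $\{Y \le b_v\}$ and propagates to strict stochastic dominance of the conditional laws, hence to $\Dcal_v[\bm] < \Dcal_v[\bm']$. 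The main obstacle is the CDF lemma in the second paragraph: it is the one genuinely new computation, and the rest (convolution preserving $\le_\lru$ and log-concavity, conditioning preserving $\le_{lr}$, log-concavity of CDFs of log-concave densities) is standard or follows from one-line summations; the strictness bookkeeping for $\Dcal_v$ also requires some care to track strictness through $\lru \Rightarrow lr \Rightarrow st \Rightarrow$ expectations.
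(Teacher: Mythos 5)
Your argument is correct and reaches the same conclusions, but it takes a genuinely different route from the paper's. Where you prove a direct, self-contained lemma about tail ratios of CDFs under the $\lru$-order (your ``key stand-alone lemma''), the paper instead expresses $\Rcal_\ore$ as a chain of three primitive operations --- convolution with the indicator $\delta_{[0,b_v]}$, shifted reversal, and reweighting by $\delta_{[0,c_e]}$ --- and applies the general preservation lemmas (Lemma~\ref{lemma: composition with lru-order}, Lemma~\ref{lemma: reweighting and shifted reversal with lru-order}) to each step. Your CDF lemma is precisely the unfolded version of that chain, and your one-line summation proof of it is clean; this buys a more elementary and explicit argument, at the cost of re-deriving from scratch what the paper gets from its primitives. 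The most visible divergence is in the log-concavity part: you invoke Hoggar's theorem for $m_\star$ and then separately establish log-concavity of the CDF and its preservation under reflection, reweighting and restriction, whereas the paper observes the very slick identity that $p$ is log-concave if and only if $p\leq_\lru p$, so that log-concavity preservation falls out of the monotonicity statement for free with no extra work. For $\Dcal_v$, your route via $\lru\Rightarrow lr$, conditioning preserves $lr$, and $lr\Rightarrow st$ is essentially equivalent to the paper's direct $\lru$-reweighting-by-$\delta_{[0,b_v]}$ argument. On the strictness of $\Dcal_v$ you correctly flag that ``bookkeeping requires care,'' but the one gap you should be explicit about is why a strict $\lru$-inequality in a single component $m_\ore$ cannot be washed out by the convolution: you need to know that $m_\star\neq m'_\star$ whenever $m_\ore\neq m'_\ore$ (and the other factors coincide), which follows because convolution of finitely-supported sequences is polynomial multiplication in an integral domain and hence cancellative. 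The paper handles this by a direct element-wise computation at the first index of strict inequality; you should insert the cancellation argument (or an equivalent) to close the sketch.
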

The proof will rely on the following lemma from \cite{Shanthikumar86} establishing stablity of $\lru$-order w.r.t. convolution:
\begin{lemma}\label{lemma: composition with lru-order}
For a set $\overrightarrow S$ of directed edges, if $\bm_{\overrightarrow S}^1\leq_\lru\bm_{\overrightarrow S}^2$ in $\Pcal^{\overrightarrow S}$, then $\ast_{\overrightarrow S}\bm^1\leq_\lru\ast_{\overrightarrow S}\bm^2$.
\end{lemma}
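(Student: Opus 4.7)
My plan would be to reduce the multi-edge claim to a pairwise convolution statement by induction on $|\overrightarrow{S}|$, and then to handle the pairwise case by expanding both sides into four-fold index sums and constructing a dominating bijection on index tuples that invokes the $\lru$-axiom once for $(m,m')$ and once for $(q,q')$.

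For the reduction, pick any $\ore_0\in\overrightarrow{S}$ and write $\ast_{\overrightarrow{S}}\bm^j = (\ast_{\overrightarrow{S}\setminus\{\ore_0\}}\bm^j_{\overrightarrow{S}\setminus\{\ore_0\}}) \ast m^j_{\ore_0}$ for $j=1,2$. By the induction hypothesis, the two convolved distributions from $\overrightarrow{S}\setminus\{\ore_0\}$ are $\lru$-ordered, and $m^1_{\ore_0} \leq_\lru m^2_{\ore_0}$ by assumption; so everything reduces to the pairwise statement: if $m\leq_\lru m'$ and $q\leq_\lru q'$ in $\Pcal$, then $m\ast q \leq_\lru m'\ast q'$.

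For the pairwise case, writing out the $\lru$-inequality on the convolutions gives
\begin{align*}
\sum_{\substack{a+b=i+k+l\\c+d=i}} m(a)q(b)m'(c)q'(d) \leq \sum_{\substack{a+b=i+l\\c+d=i+k}} m(a)q(b)m'(c)q'(d),
\end{align*}
where both sides have the same grand total $2i+k+l$, and the RHS is obtained from the LHS by transporting $k$ units of mass from the $(a,b)$-partition to the $(c,d)$-partition. I would construct the bijection by splitting this $k$-shift into $k_1$ units via the $(a,c)$-channel and $k_2 = k - k_1$ via the $(b,d)$-channel: the map $(a,b,c,d) \mapsto (a-k_1, b-k_2, c+k_1, d+k_2)$ preserves the invariants $a+c$ and $b+d$, sends LHS tuples to RHS tuples, and is termwise dominated because $\lru$ for $(m,m')$ yields $m(a)m'(c) \leq m(a-k_1)m'(c+k_1)$ when $k_1 \leq a-c$, and $\lru$ for $(q,q')$ yields $q(b)q'(d) \leq q(b-k_2)q'(d+k_2)$ when $k_2 \leq b-d$. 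A feasible split always exists, since $(a-c)+(b-d) = k + l \geq k$ forces $k$ to fit into $\max(0,a-c)+\max(0,b-d)$.

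The main obstacle is choosing the split $(k_1, k_2)$ canonically, as a function of $(a,b,c,d)$, so that the resulting transport is globally injective: naïve canonical choices such as $k_1 = \min(k, \max(0, a-c))$ generate collisions at the boundary where the ``pure $(a,c)$'' and ``pure $(b,d)$'' transports land on the same RHS tuple. Resolving this requires partitioning LHS tuples according to the sign and magnitude of $a-c$ and $b-d$ relative to $k$ and $l$, with a compensating rebalancing at each boundary to restore injectivity without destroying termwise dominance. This delicate combinatorial bookkeeping is precisely the content of \cite{Shanthikumar86}, whose argument I would invoke to conclude the pairwise case.
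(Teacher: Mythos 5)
The paper does not prove this lemma at all: it is imported wholesale from \cite{Shanthikumar86}, and your proposal ultimately rests on the same citation, after a routine (and correct) induction reducing the multi-fold convolution to the two-fold case $m\ast q\leq_\lru m'\ast q'$. Your sketched explicit mass-transport bijection for the pairwise case is not completed---you correctly identify that the canonical choice of split $(k_1,k_2)$ breaks injectivity---but since you explicitly defer that step to the very reference the paper relies on, your treatment is on the same footing as the paper's.
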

We shall also need the following notions:
\begin{itemize}
\item the \emph{reweighting} of a vector $m$ by a vector $p$ is defined by $m\centerdot p(x):=\frac{m(x)p(x)}{\sum_{y\in\N}m(y)p(y)}$ for $x\in\N$, for $p$ and $m$ with non-disjoint supports and $|p|<\infty$ or $|m|<\infty$. If $p$ or $m$ is in $\Pcal$, then $m\centerdot p\in\Pcal$. Note that $\Rcal_\ore^{(\lambda)}[\bm]=\lambda^\N\centerdot\Rcal_\ore[\bm]$, where $\lambda^\N=(\lambda^x)_{x\in\N}$.
\item the \emph{shifted reversal} of a vector $p$ is defined by $p^R(x)=p(b_v-x)\ind(x\leq b_v)$ for $x\in\N$; if $p\in\Pcal$ and its support is included in $[0,b_v]$, then $p^R\in\Pcal$ as well. 
\end{itemize}

It is straightforward to check that
\begin{lemma}\label{lemma: reweighting and shifted reversal with lru-order}
Reweighting preserves the $\lru$-order; shifted reversal reverses the $\lru$-order.
\end{lemma}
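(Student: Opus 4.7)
The plan is to verify each of the two assertions by direct substitution into the defining inequality of $\leq_\lru$, namely $m(i+k+l)m'(i)\leq m(i+l)m'(i+k)$ for all $i,k,l\in\N$.

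For the reweighting claim, observe that $(m\centerdot p)(x)=m(x)p(x)/Z$ for a positive normalization constant $Z$ that does not depend on $x$, and similarly for $m'$. The inequality $m\centerdot p\leq_\lru m'\centerdot p$ therefore reduces, after clearing the common positive normalizations, to
\[
m(i+k+l)\,m'(i)\cdot p(i+k+l)\,p(i)\;\leq\;m(i+l)\,m'(i+k)\cdot p(i+l)\,p(i+k).
\]
The hypothesis $m\leq_\lru m'$ gives the inequality between the $m$-factors, while the inequality between the $p$-factors, $p(i+k+l)\,p(i)\leq p(i+l)\,p(i+k)$, is precisely log-concavity of $p$; it holds with equality for the log-linear weight $p=\lambda^\N$, which is the only case actually used downstream in passing from $\Rcal_\ore$ to $\Rcal_\ore^{(\lambda)}$. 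Multiplying the two inequalities yields the claim.

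For the shifted-reversal claim, we must check that $m\leq_\lru m'$ implies $(m')^R\leq_\lru m^R$, i.e. $(m')^R(i+k+l)\,m^R(i)\leq (m')^R(i+l)\,m^R(i+k)$. If $i+k+l>b_v$ the left-hand side vanishes by the indicator in the definition of $p^R$, so one may assume $i+k+l\leq b_v$ and set $j:=b_v-(i+k+l)\geq 0$. Unfolding the reversal gives $b_v-i=j+k+l$, $b_v-(i+l)=j+k$, and $b_v-(i+k)=j+l$, so the inequality rewrites as
\[
m'(j)\,m(j+k+l)\;\leq\;m'(j+k)\,m(j+l),
\]
which is exactly the $\lru$ inequality $m(j+k+l)\,m'(j)\leq m(j+l)\,m'(j+k)$ asserted by $m\leq_\lru m'$, now applied at $(j,k,l)$. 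Thus shifted reversal swaps the roles of $m$ and $m'$ in the defining inequality, and therefore reverses the order.

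Neither step presents a genuine obstacle; the verification is purely algebraic bookkeeping. The only subtlety worth flagging is that the reweighting statement implicitly relies on log-concavity of $p$, which is automatic in the one context in which the lemma is applied, namely the exponential reweighting $p=\lambda^\N$ encoding the inverse temperature.
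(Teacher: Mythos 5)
The paper offers no proof of this lemma, merely noting that it is ``straightforward to check'', so there is no argument to compare against; your direct verification from the defining inequality is correct, and the substitution $j=b_v-(i+k+l)$ in the shifted-reversal part is exactly the right bookkeeping. Your flag that the reweighting assertion tacitly requires log-concavity of the weight $p$ is a genuine and correct observation: already with $m=m'$ log-concave, the desired conclusion $m\centerdot p\leq_\lru m\centerdot p$ is precisely log-concavity of $m\centerdot p$, which can fail for non-log-concave $p$, so the lemma as stated omits a hypothesis that the paper uses silently. One small inaccuracy, though: $\lambda^{\N}$ is not the only reweighting factor to which the lemma is applied. The proofs of Propositions~\ref{prop: monotonicity of local operators} and~\ref{prop: monotonicity in lambda} also reweight by the interval indicators $\delta_{[0,c_e]}$ and $\delta_{[0,b_v]}$. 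Since interval indicators, geometric sequences, and their shifted reversals are all log-concave, your observation still covers every use in the paper, but ``the one context'' slightly understates the scope.
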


Note that by the previous lemma it suffices  to prove the results of Proposition~\ref{prop: monotonicity of local operators} for $\Rcal_\ore$ and they will then extend to $\Rcal_\ore^{(\lambda)}$. For space reasons, we prove here only the part of the statement concerning $\Rcal_\ore$, and only for inputs in $\tPcal$. The rest of the proof is deferred to the appendix.
\begin{proof}
Let $\ore$ be an edge outgoing from vertex $v$, and $\bm^1_\orde,\bm^2_\orde\in\tPcal^\orde$ such that $\bm^1_\orde\leq_\lru\bm^2_\orde$. Let $\delta_{[0,b_v]}(x)=\ind(0\leq x\leq b_v)$; we have $\delta_{[0,b_v]}\ast_\orde\bm^i(x)=\sum_{x-b_v\leq|\by|\leq x}\bm^i_\orde(\by)$. $\delta_{[0,b_v]}$ is log-concave, so $\delta_{[0,b_v]}\leq_\lru\delta_{[0,b_v]}$ and Lemma~\ref{lemma: composition with lru-order} then implies $\delta_{[0,b_v]}\ast_\orde\bm^1\leq_\lru\delta_{[0,b_v]}\ast_\orde\bm^2$. Lemma~\ref{lemma: reweighting and shifted reversal with lru-order} then says $\left(\delta_{[0,b_v]}\ast_\orde\bm^1\right)^R\geq_\lru\left(\delta_{[0,b_v]}\ast_\orde\bm^2\right)^R$. It is easy to check that
\begin{eqnarray}\label{eqn: expression Rcal}
\Rcal_\ore[\bm^i]=\delta_{[0,c_e]}\centerdot\left(\delta_{[0,b_v]}\ast_\orde\bm^i\right)^R;
\end{eqnarray} and as $\left(\delta_{[0,b_v]}\ast_\orde\bm^i\right)^R(0)>0$ Lemma~\ref{lemma: reweighting and shifted reversal with lru-order} again implies that $\Rcal_\ore[\bm^1]\geq_\lru\Rcal_\ore[\bm^2]$.

If now $\bm_\orde\in\tPcallc^\orde$, then $\bm_\orde\leq_\lru\bm_\orde$ and $\Rcal_\ore[\bm]\geq_\lru\Rcal_\ore[\bm]$, hence $\Rcal_\ore[\bm]\in\tPcallc$.
\end{proof}

To pave the way for the analysis of the limit $\lambda\to\infty$, we distinguish between two collections of messages $\bm_\ordv$ and $\bn_\ordv$ in $\tPcal^\ordv$ and introduce additional operators. 
For an edge $\ore$ outgoing from $v$ we define the operator $\Qcal_\ore^{(\lambda)}:\tPcal^\orde\to\tPcal$ by $\Qcal_\ore^{(\lambda)}[\bn]=\Rcal_\ore^{(\lambda)}[\lambda^\N\centerdot\bn]$,  where $\lambda^\N\centerdot\bn=\left(\lambda^\N\centerdot n_\ore\right)_{\ore\in\orE}$. As reweighting preserves the $\lru$-order, the operator $\Qcal_\ore^{(\lambda)}$ is non-increasing. It also verifies the following useful monotonicity property with respect to $\lambda$, proven in the appendix:
\begin{proposition}[Monotonicity in $\lambda$]\label{prop: monotonicity in lambda}
For $\bn_\orde\in\tPcal^\orde$, the mapping $\lambda\mapsto\Qcal_\ore^{(\lambda)}[\bn]$ is non-decreasing.
\end{proposition}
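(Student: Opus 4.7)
The plan is to unfold $\Qcal_\ore^{(\lambda)}[\bn]$ into a closed algebraic form and then verify $\lru$-monotonicity in $\lambda$ through log-concavity together with a likelihood-ratio comparison of tilted measures. Concretely, with $\theta := \ast_\orde \bn$ and $(\lambda^\N \centerdot n_{\ore'})(y) \propto \lambda^y n_{\ore'}(y)$ applied factor by factor, the product over $\ore' \in \orde$ produces $\lambda^{|\by|}\bn_\orde(\by)$ times a $\lambda$-dependent but $x$-independent constant that cancels in the normalized ratio. One obtains
\[
q_\lambda(x) := \Qcal_\ore^{(\lambda)}[\bn](x) \;\propto\; \lambda^x\, T_\lambda(b_v - x)\,\ind(0 \leq x \leq c_{vu} \wedge b_v), \qquad T_\lambda(k) := \sum_{j=0}^{k} \lambda^j \theta(j).
\]

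Next I will show that each $q_\lambda$ is log-concave on its integer support. Log-concavity is preserved under convolution, so $\theta$ inherits log-concavity from the $n_{\ore'}$ --- which is the regime in which $\Qcal$ is actually iterated, since Proposition~\ref{prop: monotonicity of local operators} forces BP messages into $\tPcallc$ after one step. The sequence $(\lambda^j\theta(j))_j$ is then log-concave for every $\lambda > 0$, and its partial sums $k \mapsto T_\lambda(k)$ form a log-concave sequence by the classical closure of log-concave sequences under partial summation. Multiplying by the log-linear factor $\lambda^x$ preserves log-concavity, so $q_\lambda$ is log-concave.

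On log-concave distributions with a common integer support the $\lru$-order collapses to the likelihood-ratio order, so it suffices to show that
\[
\frac{q_\lambda(x+1)}{q_\lambda(x)} \;=\; \lambda\,\frac{T_\lambda(b_v - x - 1)}{T_\lambda(b_v - x)}
\]
is non-decreasing in $\lambda$ for every $x$ in the support. A logarithmic $\lambda$-derivative rewrites this as the expectation inequality $\EE_{\nu_{\lambda, k}}[J] - \EE_{\nu_{\lambda, k-1}}[J] \leq 1$, where $\nu_{\lambda, k}(j) \propto \lambda^j\theta(j)\ind(j \leq k)$ and $k = b_v - x$. I plan to derive this from an LR comparison of the auxiliary measures $\mu_1(j) \propto \lambda^j\theta(j-1)$ on $\{1, \ldots, k\}$ and $\mu_2(j) \propto \lambda^j\theta(j)$ on $\{0, \ldots, k\}$: the ratio $\mu_1(j)/\mu_2(j) \propto \theta(j-1)/\theta(j)$ is non-decreasing in $j$ precisely by log-concavity of $\theta$, giving $\mu_1 \geq_{lr} \mu_2$; a short identification shows $\EE_{\mu_1}[J] = 1 + \EE_{\nu_{\lambda, k-1}}[J]$ and $\EE_{\mu_2}[J] = \EE_{\nu_{\lambda, k}}[J]$, so LR-dominance upgrades to the needed mean inequality.

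The hardest part will be articulating log-concavity as the correct structural invariant and chasing it through the three steps (preservation under convolution, closure under partial summation, and LR shift of the tilted measure). Subsidiary care is required at boundary cases where $T_\lambda(k)$ saturates past the support of $\theta$ or $k = 0$; these are dispatched by restricting the LR and $\lru$ comparisons to the effective support of $q_\lambda$, on which every denominator is strictly positive.
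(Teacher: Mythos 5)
Your argument follows a genuinely different route from the paper's, and while the computations are sound when they apply, the proof as written does not establish the proposition as stated.

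The paper's proof is a one-line algebraic factorization: it writes $\Qcal_\ore^{(\lambda)}[\bn]=\delta_{[0,c_e]}\centerdot\left(\left(\lambda^\N\right)^R\ast_\orde\bn\right)^R$, observes that $\lambda\mapsto\lambda^\N$ is non-decreasing for $\leq_\lru$, and then simply tracks parity of the order through the chain: shifted reversal reverses $\leq_\lru$, convolution with the fixed $\bn$ preserves it (Lemma~\ref{lemma: composition with lru-order}), reversal reverses again, and reweighting preserves. The two reversals cancel, so $\lambda\mapsto\Qcal_\ore^{(\lambda)}[\bn]$ is non-decreasing. Crucially, this works for \emph{any} $\bn_\orde\in\tPcal^\orde$, which is the generality in which the proposition is stated and in which it is invoked (e.g.\ the monotone limit $\lim\uparrow_{\lambda\to\infty}\Qcal_\ore^{(\lambda)}[\bn]$ in Proposition~\ref{prop: continuity for log-concave inputs and limiting operator} is asserted for all $\bn_\orde\in\tPcal^\orde$). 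Your unfolding $q_\lambda(x)\propto\lambda^x T_\lambda(b_v-x)\ind(0\leq x\leq c_e\wedge b_v)$ is the same closed form, so you and the paper agree on that much; the difference is that you attack the monotonicity head-on via the likelihood-ratio calculus rather than via the factorized operator expression.

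The genuine gap is that both halves of your argument hinge on log-concavity of $\theta=\ast_\orde\bn$, which requires the $n_{\ore'}$ to lie in $\tPcallc$ rather than merely $\tPcal$. You need log-concavity of $q_\lambda$ to reduce $\leq_\lru$ to the ordinary likelihood-ratio order (the implication $q_\lambda\leq_{lr}q_{\lambda'}$ and $q_\lambda$ log-concave $\Rightarrow q_\lambda\leq_\lru q_{\lambda'}$ genuinely uses log-concavity of the smaller distribution), and you need it again for the key inequality $\mu_1\geq_{lr}\mu_2$ via $\theta(j-1)/\theta(j)$ non-decreasing. You acknowledge this restriction and justify it by appealing to the ``regime in which $\Qcal$ is actually iterated,'' but that argument is circular at the level of this proposition: the statement as given must hold for all of $\tPcal^\orde$, and the paper's parity-tracking proof delivers exactly that, with no log-concavity hypothesis on $\bn$. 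If you want your route to stand on its own, you would need either to note explicitly that you prove a restricted statement sufficient for the downstream applications, or to supply a separate argument for non-log-concave inputs. As it stands, the reduction from $\leq_\lru$ to $\leq_{lr}$ is not available in general, and neither is the LR comparison $\mu_1\geq_{lr}\mu_2$.

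On the positive side, conditional on log-concavity of the inputs, your chain of closure properties (convolution, log-linear tilting, partial summation, reversal, interval restriction) is correct, and the derivative identity $\lambda f'(\lambda)=1+\EE_{\nu_{\lambda,k-1}}[J]-\EE_{\nu_{\lambda,k}}[J]$ together with the shift $\EE_{\mu_1}[J]=1+\EE_{\nu_{\lambda,k-1}}[J]$ is a neat way to turn the mean-increment bound into an LR comparison. This is a more explicit, more ``probabilistic'' account of why the monotonicity holds, and it does expose structure (the partial-sum sequence $T_\lambda$ as a log-concave object, the exponentially tilted measures $\nu_{\lambda,k}$) that the paper's one-liner hides. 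But the paper's proof is both shorter and strictly more general, and the loss of generality in yours is not merely cosmetic.
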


As $\lambda\to\infty$, limiting messages may not have $0$ in their support. We thus define $\alpha_\ore$ as the infimum of the support of $m_\ore\in\Pcal$, i.e. $\alpha_\ore=\min\{x\in\N:\:m_\ore(x)>0\}$, and $\beta_\ore$ as the supremum of the support of $n_\ore\in\tPcal$, i.e. $\beta_\ore=\max\{x\in\N:\:n_\ore(x)>0\}$. When there may be confusion, we will write $\alpha(m_\ore)$ and $\beta(m_\ore)$ for the infimum and the supremum of the support of $m_\ore$. We also extend the definition of the local operators given previously so that they allow inputs with arbitrary supports in $\N$: for an edge $\ore$ outgoing from vertex $v$, we define $\Rcal_\ore:\Pcal^\orde\to\tPcal$, $\Dcal_v:\Pcal^\ordv\to\R^+$, $\Qcal_\ore:\tPcal^\orde\to\Pcal$ and $\Scal_\ore:\N^\orde\to\N$ as
\begin{eqnarray}
\Rcal_\ore[\bm](x)&=&\left\{\begin{array}{ll}\frac{\ind(x\leq c_e)\sum_{|\by|\leq b_v-x}\bm_\orde(\by)}{\sum_{t\leq c_e}\sum_{|\by|\leq b_v-t}\bm_\orde(\by)}\\\delta_0(x)\end{array}\begin{array}{ll}\text{ if }|\balpha_\orde|\leq b_v\\\text{ otherwise}\end{array}\right.\label{eqn: def Rcal}\\
\Dcal_v[\bm]&=&\left\{\begin{array}{ll}\frac{\sum_{|\bx|\leq b_v}|\bx|\bm_\ordv(\bx)}{\sum_{|\bx|\leq b_v}\bm_\ordv(\bx)}\\b_v\end{array}\begin{array}{ll}\text{ if }|\balpha_\ordv|\leq b_v\\\text{ otherwise}\end{array}\right.\label{eqn: def Dcal}\\
\Qcal_\ore[\bn](x)&=&\left\{\begin{array}{ll}\frac{\ind(x\leq c_e)\sum_{|\by|=b_v-x}\bn_\orde(\by)}{\sum_{t\leq c_e}\sum_{|\by|=b_v-t}\bn_\orde(\by)}\\\delta_{c_e}(x)\end{array}\begin{array}{ll}\text{ if }|\bbeta_\orde|\geq b_v-c_e\\\text{ otherwise}\end{array}\right.\label{eqn: def Qcal}\\
\Scal_\ore(\bx)&=&\left[b_v-|\bx_\orde|\right]_0^{c_e}.
\end{eqnarray}
Note that the support of $\Rcal_\ore[\bm]$ is $\{0,\ldots,\Scal_\ore(\balpha)\}$ and that of $\Qcal_\ore[\bn]$ is $\{\Scal_\ore(\bbeta),\ldots,c_e\}$. The following result is established in the appendix:

\begin{proposition}[Continuity for log-concave inputs and limiting operators]\label{prop: continuity for log-concave inputs and limiting operator}
The operators $\Rcal_\ore$ and $\Dcal_v$ given by equations~(\ref{eqn: def Rcal}),(\ref{eqn: def Dcal}) are continuous for the $L_1$ norm  for inputs in $\tPcallc$. Also, $\Qcal_\ore$ defined in equation~(\ref{eqn: def Qcal}) satisfies $\Qcal_\ore[\bn]=\lim\uparrow_{\lambda\to\infty}\Qcal_\ore^{(\lambda)}[\bn]$ for any $\bn_\orde\in\tPcal^\orde$.
\end{proposition}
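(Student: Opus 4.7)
The plan is to treat the three assertions separately: (i) continuity of $\Rcal_\ore$ on $\tPcallc$; (ii) continuity of $\Dcal_v$ on $\tPcallc$; and (iii) the pointwise, monotone limit $\Qcal_\ore^{(\lambda)}[\bn]\uparrow\Qcal_\ore[\bn]$.

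For (i) and (ii), the key observation is that for any $\bm_\orde\in\tPcal^\orde$ (and in particular for $\tPcallc^\orde\subset\tPcal^\orde$) each $m_{\ore'}$ has $0$ in its support, so $|\balpha_\orde|=0\leq b_v$ and the piecewise definitions (\ref{eqn: def Rcal}) and (\ref{eqn: def Dcal}) always select their analytic branch. That branch is a ratio of finite sums in the variables $m_{\ore'}(i)$, $i\leq c_{e'}$, whose denominator is bounded below by the positive quantity $\prod_{\ore'\in\orde}m_{\ore'}(0)>0$ (resp.\ over $\ordv$). Since $L_1$-convergence on a bounded support is equivalent to coordinate-wise convergence, and a rational function with a non-vanishing denominator is continuous, both $\Rcal_\ore$ and $\Dcal_v$ are continuous on $\tPcal^\orde$ and hence on $\tPcallc^\orde$.

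For (iii), I expand using $\Qcal_\ore^{(\lambda)}[\bn]=\lambda^\N\centerdot\Rcal_\ore[\lambda^\N\centerdot\bn]$; the reweighting normalizers $\sum_z\lambda^z n_{\ore'}(z)$ factor out of both numerator and denominator and cancel, giving
\begin{align*}
\Qcal_\ore^{(\lambda)}[\bn](x)=\frac{\ind(x\leq c_e)\,\lambda^x\sum_{|\by|\leq b_v-x}\lambda^{|\by|}\bn_\orde(\by)}{\sum_{s\leq c_e}\lambda^s\sum_{|\by|\leq b_v-s}\lambda^{|\by|}\bn_\orde(\by)}.
\end{align*}
This is a quotient of polynomials in $\lambda$, so the large-$\lambda$ behaviour is determined by the leading degree. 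For fixed $x\leq c_e$, the numerator's leading power is $\phi(x):=x+\min(|\bbeta_\orde|,b_v-x)=\min(x+|\bbeta_\orde|,b_v)$, with coefficient $\sum_{|\by|=\min(|\bbeta_\orde|,b_v-x)}\bn_\orde(\by)$. Two cases arise: if $|\bbeta_\orde|\geq b_v-c_e$, the maximum degree is $b_v$, attained for $x\in\{\max(0,b_v-|\bbeta_\orde|),\ldots,c_e\}$ with coefficient $\sum_{|\by|=b_v-x}\bn_\orde(\by)$, and after observing that this coefficient vanishes outside that range, the ratio of leading coefficients reproduces exactly the first branch of (\ref{eqn: def Qcal}); if instead $|\bbeta_\orde|<b_v-c_e$, the maximum $c_e+|\bbeta_\orde|$ is attained uniquely at $x=c_e$, yielding $\delta_{c_e}$ in the limit, matching the second branch. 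This gives pointwise (hence $L_1$, since supports are contained in $\{0,\ldots,c_e\}$) convergence, and Proposition~\ref{prop: monotonicity in lambda} upgrades it to a $\lru$-monotone limit, justifying the $\lim\uparrow$ notation.

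The main obstacle is the case analysis in (iii): one has to identify the $\operatorname{argmax}$ set of $\phi$ over $\{0,\ldots,c_e\}$ and match the leading coefficients against the two-branch formula (\ref{eqn: def Qcal}), using the fact that $\sum_{|\by|=b_v-x}\bn_\orde(\by)$ automatically encodes the indicator $\ind(x\geq b_v-|\bbeta_\orde|)$ (it vanishes when $b_v-x>|\bbeta_\orde|$). By contrast, (i) and (ii) are essentially automatic once one recognises that staying in $\tPcal$ confines us to the analytic branch of each piecewise definition.
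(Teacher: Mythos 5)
Your treatment of $\Qcal_\ore$ (part (iii)) follows essentially the same route as the paper: cancel the reweighting normalizers, view $\Qcal_\ore^{(\lambda)}[\bn](x)$ as a ratio of polynomials in $\lambda$, extract leading degrees and coefficients, and split on $|\bbeta_\orde|\gtrless b_v-c_e$. Packaging the leading exponent as $\phi(x)=\min(x+|\bbeta_\orde|,b_v)$ is a clean way to organize the argmax set, and invoking Proposition~\ref{prop: monotonicity in lambda} for the $\lim\uparrow$ notation matches the paper. That part is sound.

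For the continuity of $\Rcal_\ore$ and $\Dcal_v$, however, there is a genuine gap. You argue that on $\tPcallc$ one always lands in the analytic branch of equations~(\ref{eqn: def Rcal})--(\ref{eqn: def Dcal}), with denominator bounded below by $\prod_{\ore'\in\orde}m_{\ore'}(0)>0$, and conclude by continuity of a rational function. This proves continuity at points of $\tPcallc$, i.e., along sequences whose $L_1$-limit itself lies in $\tPcallc$. But the paper's proof, and the later use of this proposition, address the harder situation: a sequence $\bm^{(k)}\in\tPcallc^{\orde}$ may converge in $L_1$ to a limit $\bm\in\Pcallc^{\orde}$ that does \emph{not} lie in $\tPcallc^{\orde}$, because some coordinate loses $0$ from its support in the limit. (This is exactly what happens in Proposition~\ref{prop: BP estimate in unimodular random graphs}, where $\bm^{(\lambda)}\in\tPcallc^{\orE}$ but, by Proposition~\ref{prop: limit of $0$ temperature}, $\bm^{(\infty)}=\lim\uparrow_\lambda\bm^{(\lambda)}$ is only in $\Pcallc^{\orE}$.) In that regime $|\balpha_\orde(\bm)|$ can exceed $b_v$, the lower bound $\prod_{\ore'}m^{(k)}_{\ore'}(0)$ tends to $0$, the analytic branch's denominator vanishes along the sequence, and the extended operator jumps to the $\delta_0$ (resp.\ $b_v$) branch. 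Showing $\Rcal_\ore[\bm^{(k)}]\to\delta_0$ (resp.\ $\Dcal_v[\bm^{(k)}]\to b_v$) then requires identifying the leading (nonvanishing) terms of numerator and denominator as $k\to\infty$, which is precisely the case analysis $|\balpha_\orde|\leq b_v$ versus $|\balpha_\orde|\geq b_v$ carried out in the paper. Your argument never touches this boundary case, which is the one that matters when passing to the $\lambda\to\infty$ limit.
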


It follows naturally that $\Qcal_\ore$ is non-increasing. Moreover, we can extend the results of Proposition~\ref{prop: monotonicity of local operators} to the extended operators, i.e. $\Rcal_\ore$ is still non-increasing and $\Dcal_v$ non-decreasing.

\subsection{Convergence of BP on finite graphs}\label{sec:finite} 
The main result of this section is the following
\begin{proposition}[Convergence of BP to a unique fixed point]\label{prop: unique fixed-point in finite graphs}
Synchronous BP message updates according to $\bm^{t+1}=\Rcal_G^{(\lambda)}[\bm^t]$ for $t\geq 0$ converge to the unique solution $\bm^{(\lambda)}$ of the fixed point equation $\bm=\Rcal_G^{(\lambda)}[\bm]$.
\end{proposition}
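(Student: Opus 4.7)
The plan is to use the $\lru$-monotonicity from Proposition~\ref{prop: monotonicity of local operators} to sandwich the BP iterates, then invoke compactness and $L_1$-continuity on log-concave inputs to extract subsequential limits, and finally use strict monotonicity to collapse the resulting 2-cycle.

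First I would initialize with $\bm^{(0)}_\ore := \delta_0$, the $\lru$-minimum of $\tPcal$ (for any $m'\in\Pcal$ the defining inequality $\delta_0(i+k+l)m'(i)\le\delta_0(i+l)m'(i+k)$ is trivially satisfied). Writing $F:=\Rcal_G^{(\lambda)}$, Proposition~\ref{prop: monotonicity of local operators} makes $F$ componentwise $\lru$-non-increasing, so $T:=F\circ F$ is $\lru$-non-decreasing. Starting from $\bm^{(0)}\leq_\lru\bm^{(1)}$ an induction yields that $(\bm^{(2t)})_t$ is $\lru$-non-decreasing, $(\bm^{(2t+1)})_t$ is $\lru$-non-increasing, $\bm^{(2t)}\leq_\lru\bm^{(2t+1)}$ for every $t$, and moreover any fixed point $\bm^*$ of $F$ is sandwiched: $\bm^{(2t)}\leq_\lru\bm^*\leq_\lru\bm^{(2t+1)}$.

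Next I would extract $L_1$-convergent limits. Each $m^{(t)}_\ore$ lives in the finite-dimensional simplex over $\{0,\ldots,c_e\}$, so the iterates lie in a compact product space. The order $\leq_\lru$ is antisymmetric on $\Pcal$ (setting $l=0$ in the defining inequality forces $m'/m$ to be constant on the common support, hence $m=m'$), so any two subsequential limits of a $\lru$-monotone sequence must coincide. This gives $\bm^{(2t)}\to\bm^-$ and $\bm^{(2t+1)}\to\bm^+$ in $L_1$, with $\bm^-\leq_\lru\bm^+$. Since $\Rcal^{(\lambda)}$ preserves log-concavity, $\bm^-,\bm^+\in\tPcallc^\orE$, and Proposition~\ref{prop: continuity for log-concave inputs and limiting operator} combined with continuity of the $\lambda^\N\centerdot$ reweighting lets us pass to the limit: $F[\bm^-]=\bm^+$ and $F[\bm^+]=\bm^-$.

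The remaining step, collapsing the 2-cycle to prove $\bm^-=\bm^+$, is the main obstacle. I would argue by contradiction: if $\bm^-_{\ore_0}<_\lru\bm^+_{\ore_0}$ strictly at some directed edge $\ore_0$, then revisiting the proof of Proposition~\ref{prop: monotonicity of local operators} yields a strict version---each of its building blocks (convolution with $\delta_{[0,b_v]}$, shifted reversal, reweighting by $\delta_{[0,c_e]}\centerdot\lambda^\N$) preserves or reverses strict $\lru$ under the full-support condition guaranteed by $\lambda<\infty$. Hence strict inequality at one input of $\Rcal^{(\lambda)}_\ore$ transmits to its output, and iterating $T$ propagates strict inequality along non-backtracking walks; when $G$ is connected (treating components separately otherwise), after finitely many iterations every edge carries a strict inequality. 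The strict monotonicity of $\Dcal_v$ on log-concave inputs with $0$ in their support (applicable since $\lambda<\infty$ forces $0$ into every message's support) then yields $\Dcal_v[\bm^-_\ordv]<\Dcal_v[\bm^+_\ordv]$ strictly at every vertex, contradicting the conservation identity $\sum_v\Dcal_v[\bm^-_\ordv]=\sum_v\Dcal_v[\bm^+_\ordv]$ obtained by double-counting, at their two endpoints, the mixed BP edge marginals $p_e(x)\propto\lambda^{-x}m^-_{\overrightarrow{uv}}(x)m^+_{\overrightarrow{vu}}(x)\ind(x\leq c_e)$. Thus $\bm^-=\bm^+=:\bm^{(\lambda)}$, which is the unique fixed point of $F$ by the sandwich. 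The most delicate piece, which I expect to require the most care, is verifying that conservation identity at the level of a 2-cycle rather than a single fixed point.
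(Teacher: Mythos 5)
Your overall scaffold is the paper's: initialize all messages at $\delta_0$ (the $\lru$-minimum), use the $\lru$-monotonicity of $\Rcal_G^{(\lambda)}$ to get adjacent monotone even/odd subsequences, extract $L_1$-limits $\bm^-\leq_\lru\bm^+$ by compactness, pass to the limit via continuity on log-concave inputs to get $\bm^+=\Rcal_G^{(\lambda)}[\bm^-]$ and $\bm^-=\Rcal_G^{(\lambda)}[\bm^+]$, sandwich any other initialization, and finally collapse the $2$-cycle through the edge-marginal conservation identity plus strict monotonicity of $\Dcal_v$. You even correctly identify the mixed edge marginal $\lambda^{-x}m^-_{\overrightarrow{uv}}(x)m^+_{\overrightarrow{vu}}(x)$, which is the symmetric object behind the double-counting.

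Where you diverge from the paper is in the last step, and the divergence introduces an unnecessary and actually unjustified claim. You argue by contradiction: assume $\bm^-_{\ore_0}<_\lru\bm^+_{\ore_0}$ strictly, propagate strictness along non-backtracking walks so that \emph{every} edge is strictly ordered, and then apply strict monotonicity of $\Dcal_v$ at \emph{every} vertex. The propagation step is not true in general: $\Rcal_\ore^{(\lambda)}$ can erase strict $\lru$-orderings. Concretely, since
\begin{align*}
\Rcal_\ore[\bm](x)\;\propto\;\ind(x\leq c_e)\sum_{|\by|\leq b_v-x}\bm_\orde(\by),
\end{align*}
if $b_v$ is large enough that the convolution $\ast_\orde\bm$ is supported inside $\{0,\ldots,b_v-c_e\}$, then $\Rcal_\ore[\bm]$ is the uniform distribution on $\{0,\ldots,c_e\}$ regardless of $\bm_\orde$, so strict inequality at the input does not survive. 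Fortunately you do not need the propagation at all: if $\bm^-_{\ore_0}<_\lru\bm^+_{\ore_0}$ strictly and $v$ is the head of $\ore_0$, then by the strict monotonicity of $\Dcal_v$ on $\tPcallc^\ordv$ you get $\Dcal_v[\bm^-]<\Dcal_v[\bm^+]$ at that single $v$, while non-strict monotonicity gives $\Dcal_u[\bm^-]\leq\Dcal_u[\bm^+]$ at every other $u$; this already contradicts the conservation identity $\sum_w\Dcal_w[\bm^-]=\sum_w\Dcal_w[\bm^+]$. The paper goes the direct route (no contradiction): non-strict monotonicity plus the conservation identity force $\Dcal_v[\bm^-]=\Dcal_v[\bm^+]$ at every $v$, and then strict monotonicity of each $\Dcal_v$ gives $\bm^-_\ordv=\bm^+_\ordv$ for every $v$, hence $\bm^-=\bm^+$. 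That version is both shorter and avoids the propagation issue entirely.
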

\begin{proof}
For all $\ore\in\orE$ initialize the message on $\ore$ at $m_\ore^0=\delta_0\in\tPcallc$. As $\Rcal_G^{(\lambda)}$ is non-increasing and $\delta_0$ is a smallest element for the $\lru$ order, it can readily be shown that the following inequalities hold for all $t\ge 0$:
$$
\bm^{2t}\le_{\lru} \bm^{2t+2}\le_{\lru}\bm^{2t+3}\le_{\lru}\bm^{2t+1}.
$$ 
In other words the two series $(\bm^{2t})_{t\geq0}$ and $(\bm^{2t+1})_{t\geq 0}$ are {\em adjacent} and hence converge to respective limits $\bm^-$, $\bm^+$ such that $\bm^-\leq_{\lru} \bm^+$. Continuity of $\Rcal_G^{(\lambda)}$ further guarantees that $\bm^+=\Rcal_G^{(\lambda)}(\bm^-)$ and $\bm^-=\Rcal_G^{(\lambda)}(\bm^+)$. Moreover, considering any other sequence of vectors of messages $(\bm'^t)_{t\geq 0}$ with an arbitrary initialization, since $\bm^0\leq_\lru\bm'^0$, monotonicity of $\Rcal_G^{(\lambda)}$ ensures that for all $t\geq 0$, one has
$$
\bm^{2t}\leq_\lru \bm'^{2t},\bm'^{2t+1}\leq_\lru \bm^{2t+1}.
$$
The result will then follow if we can show that $\bm^+=\bm^-$. 

We establish this by exploiting the fact that $\Dcal_v$ is strictly increasing for inputs in $\tPcallc$.
As $\bm^{-}\leq_\lru\bm^{+}$ and $\Dcal_v$ is non-decreasing for the $\lru$-order for all $v\in V$, it follows $\Dcal_v[\bm^{-}]\leq\Dcal_v[\bm^{+}]$ for all $v\in V$. Then, summing over all vertices of $G$, we get
\begin{align*}
\sum_{v\in V}\Dcal_v[\bm^{-}]&=\sum_{v\in V}\sum_{u\sim v}\frac{\sum_{x\in\N}xm_{\overrightarrow{uv}}^{-}(x)\Rcal_{\overrightarrow{vu}}[\bm^{-}](x)}{\sum_{x\in\N}m_{\overrightarrow{uv}}^{-}(x)\Rcal_{\overrightarrow{vu}}[\bm^{-}](x)}
=\sum_{v\in V}\sum_{u\sim v}\frac{\sum_{x\in\N}x\Rcal_{\overrightarrow{uv}}[\bm^{+}](x)m_{\overrightarrow{vu}}^{+}(x)}{\sum_{x\in\N}\Rcal_{\overrightarrow{uv}}[\bm^{+}](x)m_{\overrightarrow{vu}}^{+}(x)}\\
&=\sum_{u\in V}\sum_{v\sim u}\frac{\sum_{x\in\N}x\Rcal_{\overrightarrow{uv}}[\bm^{+}](x)m_{\overrightarrow{vu}}^{+}(x)}{\sum_{x\in\N}\Rcal_{\overrightarrow{uv}}[\bm^{+}](x)m_{\overrightarrow{vu}}^{+}(x)}
=\sum_{u\in V}\Dcal_u[\bm^{+}].
\end{align*}
Hence, in fact, $\Dcal_v[\bm^{-}]=\Dcal_v[\bm^{+}]$ for all $v\in V$. As $\Dcal_v$ is strictly increasing for these inputs, $\bm^-=\bm^+=\bm^{(\lambda)}$ follows.
\end{proof}
We finish this Section by stating results on the limiting behaviour of the fixed point of BP on a fixed finite graph $G$ as $\lambda\to\infty$. 
First, adapting the argument of Chertkov \cite{Chertkov08} which dealt only with unitary capacities, we can show
\begin{proposition}[Correctness for finite bipartite graphs]\label{prop: Chertkov}
In finite bipartite graphs,
\begin{align*}
\frac{1}{2}\lim\uparrow_{\lambda\to\infty}\sum_{v\in V}\Dcal_v[\bm^{(\lambda)}]=M(G).
\end{align*}
\end{proposition}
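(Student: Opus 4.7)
The plan is to show that the sum $\frac{1}{2}\sum_{v\in V}\Dcal_v[\bm^{(\lambda)}]$ is actually equal to the exact Gibbs expectation $\mu_G^\lambda(|\bX|)$ for every $\lambda$ on a finite bipartite graph $G$, and then pass to the $\lambda\to\infty$ limit, where the latter manifestly tends to $M(G)$ since $\mu_G^\lambda$ concentrates on the maximum-size allocations.

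The starting observation is that $\mu_G^\lambda(|\bX|) = \frac{d}{d\log\lambda}\log Z_G(\lambda)$ and, by double-counting edges, $\mu_G^\lambda(|\bX|) = \frac{1}{2}\sum_{v\in V}\mu_G^\lambda\bigl(\sum_{e\in\d v}X_e\bigr)$. On any finite tree the earlier proposition on BP exactness gives the pointwise identity $\mu_G^\lambda(\sum_{e\in\d v}X_e) = \Dcal_v[\bm^{(\lambda)}]$, so the claim is immediate in the tree case. For a bipartite graph with cycles, I would invoke the Chertkov--Chernyak loop-series expansion
$$
Z_G(\lambda) \;=\; Z_G^{\text{Bethe}}(\lambda)\Bigl(1 + \sum_{\text{generalized loops }\ell} r_\ell\Bigr),
$$
where $Z_G^{\text{Bethe}}(\lambda)$ is the Bethe approximation constructed from the BP fixed point $\bm^{(\lambda)}$ and each $r_\ell$ is a product of local correction factors attached to the vertices and edges of $\ell$.

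The key bipartite step, due to Chertkov in the unit-capacity matching case, is that each $r_\ell$ vanishes: any cycle in a bipartite graph has even length, and the local corrections, expressed in terms of ``remaining'' capacities on the two sides of the bipartition, alternate in a way that forces cancellation around the loop. Once $Z_G(\lambda)=Z_G^{\text{Bethe}}(\lambda)$ is established, differentiating in $\log\lambda$ on each side and using the standard computation that at a BP fixed point the stationarity in $\bm$ kills the implicit dependence on $\lambda$ through the messages identifies $\frac{d}{d\log\lambda}\log Z_G^{\text{Bethe}}(\lambda)$ with $\frac{1}{2}\sum_v \Dcal_v[\bm^{(\lambda)}]$, yielding the desired equality for every $\lambda$. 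The monotone character $\lim\uparrow$ of the limit is consistent with Proposition~\ref{prop: monotonicity in lambda}, which gives that the rescaled operators $\Qcal_\ore^{(\lambda)}$ are non-decreasing in $\lambda$ for the $\lru$-order, and hence so is the induced vertex occupancy estimate through the non-decreasing operator $\Dcal_v$.

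The main obstacle is extending Chertkov's cancellation from unit edge capacities to arbitrary integer $c_e\geq 1$. For $c_e=1$ the local factors at each vertex are linear in the message marginals and the alternating sign structure around an even cycle is essentially immediate from the matching-polytope integrality. For general $c_e$ the local factors involve higher moments of the multivariate BP distributions in $\tPcal$, so one must verify that the truncated sums $\sum_{|\by|\leq b_v-x}\bm_\orde(\by)$ appearing in $\Rcal_\ore$ still produce the requisite vanishing around any bipartite cycle. This reduces to a combinatorial identity pairing the ``supply'' side of the bipartition with the ``demand'' side and generalizing the integrality of the bipartite $b$-matching/allocation polytope, which is the technical heart of the adaptation.
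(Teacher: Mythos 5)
The paper itself does not prove Proposition~\ref{prop: Chertkov}: the appendix states explicitly that the proof is omitted because the result is ``not part of the development towards the main theorems.'' The main text only says that the result is obtained by ``adapting the argument of Chertkov \cite{Chertkov08} which dealt only with unitary capacities.'' So there is no in-paper proof to compare against; I can only assess your plan on its own merits.

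Your plan correctly identifies the source the paper cites and the general mechanism (loop-series/Bethe exactness on bipartite graphs, then differentiation in $\log\lambda$, then the $\lambda\to\infty$ limit). However, there are two substantive problems. First, and most importantly, you explicitly defer the crux: ``one must verify that the truncated sums $\sum_{|\by|\leq b_v-x}\bm_\orde(\by)$ appearing in $\Rcal_\ore$ still produce the requisite vanishing around any bipartite cycle\ldots\ which is the technical heart of the adaptation.'' That technical heart is exactly the content of the proposition, so what you have written is a proof \emph{plan} rather than a proof. Second, the finite-$\lambda$ claim that $Z_G(\lambda)=Z_G^{\mathrm{Bethe}}(\lambda)$ (equivalently, that every loop correction $r_\ell$ vanishes) appears to be stronger than what is true or needed. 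Even for unit capacities, the Bethe partition function of the monomer--dimer model on a bipartite graph with cycles is generally only a bound on $Z_G(\lambda)$, not equal to it; the correct statement is that the BP estimate $\tfrac12\sum_v\Dcal_v[\bm^{(\lambda)}]$ agrees with $M(G)$ in the zero-temperature limit $\lambda\to\infty$, which is tied to the integrality of the bipartite allocation LP rather than to an all-$\lambda$ partition-function identity. You should aim your argument at the limit only; otherwise, even if you completed the loop-cancellation step, you would be trying to prove something false.

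Finally, a minor remark on the monotone arrow in $\lim\uparrow$: you justify it via Proposition~\ref{prop: monotonicity in lambda}, but the monotonicity of $\lambda\mapsto\Dcal_v[\bm^{(\lambda)}]$ is actually obtained in the paper from Proposition~\ref{prop: limit of $0$ temperature} (which shows $\bm^{(\lambda)}$ is $\lru$-nondecreasing in $\lambda$) combined with the $\lru$-monotonicity of $\Dcal_v$ from Proposition~\ref{prop: monotonicity of local operators}; Proposition~\ref{prop: monotonicity in lambda} concerns $\Qcal_\ore^{(\lambda)}$ acting on a \emph{fixed} input, which is an ingredient but not by itself the statement you need.
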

\begin{remark}
In view of this proposition, BP can be used as an algorithm to compute the maximum size of allocations in finite bipartite graphs, by running the algorithm at finite temperature parameter $\lambda$, computing $\Dcal_v[\bm^{(\lambda)}]$ for all $v$ from the fixed-point messages, and then letting $\lambda\to\infty$.
\end{remark}
In the non-bipartite case, the fixed-point $\bm^{(\lambda)}$ at finite $\lambda$ admits a limit $\bm^{(\infty)}$, and the value of $\sum_v\Dcal_v[\bm^{(\infty)}]$ is equal to $\sum_vF_v(\balpha^{(\infty)})$, where $F_v$ is defined in the propositions below (whose proof is in the appendix). This sum is computed from the infimum $\balpha^{(\infty)}$ of the support of $\bm^{(\infty)}$. Furthermore, $\balpha^{(\infty)}$ can also be obtained from a fixed-point equation, of which it is the solution that gives the lowest value of $\sum_vF_v$.
\begin{proposition}[Limit of $\lambda\to\infty$]\label{prop: limit of $0$ temperature}
$\bm^{(\lambda)}$ is non-decreasing in $\lambda$ for the $\lru$-order, and $\bm^{(\infty)}=\lim\uparrow_{\lambda\to\infty}\bm^{(\lambda)}\in\Pcallc^\orE$ is the minimal solution (for the $\lru$-order) of $\bm^{(\infty)}=\Qcal_G\circ\Rcal_G[\bm^{(\infty)}]$.
\end{proposition}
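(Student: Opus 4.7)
The plan starts with the identity $\Rcal_G^{(\lambda)}\circ\Rcal_G^{(\lambda)}=\Qcal_G^{(\lambda)}\circ\Rcal_G=:T_\lambda$, which follows directly from $\Rcal_G^{(\lambda)}[\bm]=\lambda^\N\centerdot\Rcal_G[\bm]$ and the definition of $\Qcal_G^{(\lambda)}$. Hence $\bm^{(\lambda)}$, being a fixed point of $\Rcal_G^{(\lambda)}$, is also a fixed point of $T_\lambda$, which is non-decreasing in its argument (composition of two non-increasing operators, by Proposition~\ref{prop: monotonicity of local operators}) and non-decreasing in $\lambda$ at any fixed input (Proposition~\ref{prop: monotonicity in lambda}). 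To obtain monotonicity of $\bm^{(\lambda)}$ in $\lambda$, I would fix $\lambda_1\leq\lambda_2$ and iterate $\bm_0=\bm^{(\lambda_1)}$, $\bm_{t+1}=T_{\lambda_2}[\bm_t]$: monotonicity in $\lambda$ gives $\bm_1\geq_\lru\bm_0$, and induction via monotonicity in the argument yields $\bm_t\leq_\lru\bm_{t+1}$. Iterates remain in $\tPcallc^\orE$, and Proposition~\ref{prop: unique fixed-point in finite graphs} (applied from an arbitrary initialization via a two-sided sandwich between iterates started at $\delta_0$ and $\delta_{\bc}$) ensures $\bm_t\to\bm^{(\lambda_2)}$, so $\bm^{(\lambda_1)}\leq_\lru\bm^{(\lambda_2)}$.

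Since all messages are supported in the finite set $\{0,\ldots,c_e\}$, the $\lru$-monotone family $(\bm^{(\lambda)})$ admits a pointwise limit $\bm^{(\infty)}$; log-concavity (the inequalities $p_ip_{i+2}\leq p_{i+1}^2$ together with the interval support) is preserved in the limit, so $\bm^{(\infty)}\in\Pcallc^\orE$. Set $\bn^{(\lambda)}:=\Rcal_G[\bm^{(\lambda)}]$, so that $\bm^{(\lambda)}=\Qcal_G^{(\lambda)}[\bn^{(\lambda)}]$; the sequence $(\bn^{(\lambda)})$ is $\lru$-decreasing (since $\Rcal_G$ is non-increasing), and by continuity of $\Rcal_G$ on log-concave inputs (Proposition~\ref{prop: continuity for log-concave inputs and limiting operator}), $\bn^{(\lambda)}\to\bn^{(\infty)}:=\Rcal_G[\bm^{(\infty)}]$. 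I would transfer the fixed point equation to the limit via a two-sided squeeze. On one side, Proposition~\ref{prop: monotonicity in lambda} gives $\bm^{(\lambda)}\leq_\lru\Qcal_G[\bn^{(\lambda)}]$; since $m\leq_\lru m'$ forces $\beta(m)\leq\beta(m')$, the integer indices $\beta(n_\ore^{(\lambda)})$ are monotone and hence eventually constant, which makes the explicit formula~(\ref{eqn: def Qcal}) for $\Qcal_G$ continuous along the sequence, yielding $\bm^{(\infty)}\leq_\lru\Qcal_G[\bn^{(\infty)}]$. On the other side, for any fixed $\lambda_0$ and $\lambda\geq\lambda_0$, non-increasingness of $\Qcal_G^{(\lambda)}$ in its argument gives $\bm^{(\lambda)}\geq_\lru\Qcal_G^{(\lambda)}[\bn^{(\lambda_0)}]$, and Proposition~\ref{prop: continuity for log-concave inputs and limiting operator} shows the right-hand side converges upwards to $\Qcal_G[\bn^{(\lambda_0)}]$; letting $\lambda_0\to\infty$ and invoking support stabilization once more yields $\bm^{(\infty)}\geq_\lru\Qcal_G[\bn^{(\infty)}]$. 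Therefore $\bm^{(\infty)}=\Qcal_G\circ\Rcal_G[\bm^{(\infty)}]$.

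For minimality, let $\bm^*$ be any solution of $\bm^*=\Qcal_G\circ\Rcal_G[\bm^*]$. Proposition~\ref{prop: monotonicity in lambda} gives $T_\lambda[\bm^*]=\Qcal_G^{(\lambda)}[\Rcal_G[\bm^*]]\leq_\lru\Qcal_G[\Rcal_G[\bm^*]]=\bm^*$, and monotonicity of $T_\lambda$ in its argument then makes $(T_\lambda^k[\bm^*])_{k\geq 0}$ an $\lru$-non-increasing sequence. Identifying these with the even iterates of $\Rcal_G^{(\lambda)}$ started at $\bm^*$, Proposition~\ref{prop: unique fixed-point in finite graphs} (convergence from arbitrary initialization via the $\delta_0$/$\delta_{\bc}$ sandwich) ensures they converge to $\bm^{(\lambda)}$, whence $\bm^{(\lambda)}\leq_\lru\bm^*$ for every $\lambda$; letting $\lambda\to\infty$ gives $\bm^{(\infty)}\leq_\lru\bm^*$. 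The chief technical obstacle throughout is the passage to the limit $\lambda\to\infty$ in the fixed point equation, since $\Qcal_G$ is not a continuous extension of $\Qcal_G^{(\lambda)}$ at arbitrary inputs; the two-sided squeeze combined with the stabilization of the integer-valued support endpoints is precisely what renders this passage legitimate.
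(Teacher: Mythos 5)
Your overall architecture tracks the paper's: establish monotonicity of $\bm^{(\lambda)}$ in $\lambda$, pass to the limit in the fixed-point relation to get $\bm^{(\infty)}=\Qcal_G\circ\Rcal_G[\bm^{(\infty)}]$, and prove minimality via a sandwich. Two of the three pieces are fine, and in one case even cleaner than the paper: your monotonicity-in-$\lambda$ argument (iterate $T_{\lambda_2}$ starting from $\bm^{(\lambda_1)}$, push up via $T_{\lambda_2}\geq_\lru T_{\lambda_1}$ plus monotonicity, and use convergence from arbitrary initialization) is a valid alternative to the paper's induction on BP iterates, and your minimality argument coincides with the paper's up to how the final sandwich is closed.

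The gap is in the passage to the limit in the fixed-point equation, and it is a real one. You claim that because $\beta(n_\ore^{(\lambda)})$ is a monotone integer sequence it ``stabilizes,'' and that this makes formula~(\ref{eqn: def Qcal}) for $\Qcal_G$ continuous along $\bn^{(\lambda)}\to\bn^{(\infty)}$. But stabilization of $\beta(n_\ore^{(\lambda)})$ is not the right thing to track: since $\bn^{(\lambda)}=\Rcal_G[\bm^{(\lambda)}]$ with $\bm^{(\lambda)}\in\tPcal$ one has $\beta(n^{(\lambda)}_{\overrightarrow{wv}})=\min(b_w,c_{wv})$, a \emph{constant} independent of $\lambda$, yet the pointwise limit $\bn^{(\infty)}=\Rcal_G[\bm^{(\infty)}]$ generically has strictly smaller support $\{0,\ldots,\Scal_{\overrightarrow{wv}}(\balpha(\bm^{(\infty)}))\}$ because $\bm^{(\infty)}\notin\tPcal$ in general. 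When $|\bbeta_\orde(\bn^{(\lambda)})|\geq b_v-c_e$ for all $\lambda$ but $|\bbeta_\orde(\bn^{(\infty)})|<b_v-c_e$, the denominator $\sum_{b_v-c_e\leq|\by|\leq b_v}\bn^{(\lambda)}_\orde(\by)$ in~(\ref{eqn: def Qcal}) tends to $0$, and the formula for $\Qcal_\ore$ is discontinuous there (the limiting value jumps to $\delta_{c_e}$). Your $\lru$-inequalities $\bm^{(\infty)}\leq_\lru\Qcal_G[\bn^{(\lambda)}]$ and $\bm^{(\infty)}\geq_\lru\Qcal_G[\bn^{(\lambda_0)}]$ are correct, but the claimed passage $\Qcal_G[\bn^{(\lambda)}]\to\Qcal_G[\bn^{(\infty)}]$ is what fails. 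The fix — and the route that makes the paper's terse ``passing to the limit'' rigorous — is to keep the \emph{finite-}$\lambda_0$ operators on the outside: from $\bn^{(\lambda)}\leq_\lru\bn^{(\lambda_0)}$ and $\bn^{(\lambda)}\geq_\lru\bn^{(\infty)}$ one gets $\Qcal^{(\lambda_0)}_G[\bn^{(\lambda)}]\leq_\lru\bm^{(\lambda)}=\Qcal^{(\lambda)}_G[\bn^{(\lambda)}]\leq_\lru\Qcal^{(\lambda)}_G[\bn^{(\infty)}]$ for $\lambda\geq\lambda_0$. Now $\Qcal^{(\lambda_0)}_G$ is genuinely continuous along $\bn^{(\lambda)}\to\bn^{(\infty)}$ because its denominator $\sum_{t\leq c_e}\lambda_0^t\sum_{|\by|\leq b_v-t}\lambda_0^{|\by|}\bn_\orde(\by)$ contains the $\by=\mathbf{0}$ term and $\bn^{(\infty)}\in\tPcal$, so it stays bounded away from $0$; sending $\lambda\to\infty$ gives $\Qcal^{(\lambda_0)}_G[\bn^{(\infty)}]\leq_\lru\bm^{(\infty)}\leq_\lru\Qcal_G[\bn^{(\infty)}]$, and sending $\lambda_0\to\infty$ via Proposition~\ref{prop: continuity for log-concave inputs and limiting operator} with the \emph{fixed} input $\bn^{(\infty)}$ closes the squeeze. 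This keeps every operator applied at fixed temperature until the very last step and avoids ever invoking continuity of the degenerate limit operator $\Qcal_G$.
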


\begin{proposition}[BP estimate in finite graphs]\label{prop: BP estimate in finite graphs}
In a finite graph $G$, we have
\begin{align*}
\lim\uparrow_{\lambda\to\infty}\sum_{v\in V}\Dcal_v[\bm^{(\lambda)}]=\sum_{v\in V}\Dcal_v[\bm^{(\infty)}]=\sum_{v\in V}F_v(\balpha^{(\infty)})=\inf_{\balpha=\Scal_G\circ\Scal_G(\balpha)} \sum_{v\in V}F_v(\balpha),
\end{align*}
where $F_v(\balpha)=\min(b_v,|\balpha_\ordv|)+(b_v-|\balpha_\oldv|)^+$.
\end{proposition}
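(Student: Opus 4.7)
The three equalities are tackled in turn.

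For the first, the plan is a monotone-convergence argument. By Proposition~\ref{prop: limit of $0$ temperature}, $\bm^{(\lambda)}\uparrow_{\lru}\bm^{(\infty)}\in\Pcallc^\orE$, and by Proposition~\ref{prop: monotonicity of local operators} $\Dcal_v$ is non-decreasing for $\leq_\lru$, so $\Dcal_v[\bm^{(\lambda)}]$ is itself non-decreasing in $\lambda$. Since all messages are supported in the bounded integer set $\{0,\ldots,c_e\}$, $\lru$-convergence of $\bm^{(\lambda)}$ to $\bm^{(\infty)}$ upgrades to $L_1$-convergence, and continuity of $\Dcal_v$ on log-concave inputs (Proposition~\ref{prop: continuity for log-concave inputs and limiting operator}) then delivers $\Dcal_v[\bm^{(\lambda)}]\uparrow\Dcal_v[\bm^{(\infty)}]$; summing over $v$ gives the first equality.

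The second equality is the analytic core of the proof. The plan is to expand $\Dcal_v[\bm^{(\infty)}]$ as the mean under the product measure $\bm_\ordv^{(\infty)}$ restricted to $\{|\bx|\leq b_v\}$ and then substitute the fixed-point representation $m_\ore^{(\infty)}=\Qcal_\ore[\Rcal_\orde[\bm^{(\infty)}]]$. Vertex by vertex, the two sides do \emph{not} coincide: $\Dcal_v[\bm^{(\infty)}]$ depends on the full log-concave shape of the incoming messages, whereas $F_v(\balpha^{(\infty)})$ depends only on the support infima. The identity emerges only after summing over $v$ and pairing, on each undirected edge $uv$, the $\Qcal$-piece of $m_{\overrightarrow{uv}}^{(\infty)}$ (which enters $\Dcal_v$) with the $\Rcal$-piece of $m_{\overrightarrow{vu}}^{(\infty)}$ (which enters $\Dcal_u$); the fixed-point relation $\balpha^{(\infty)}=\Scal_G\circ\Scal_G(\balpha^{(\infty)})$ then triggers an edge-by-edge telescoping that cancels all the cross-terms and leaves $\sum_v F_v(\balpha^{(\infty)})$. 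I expect this algebraic step, and in particular the careful handling of the degenerate case $|\balpha_\ordv|>b_v$ where the extended definition of $\Dcal_v$ is $b_v$, to be the main obstacle.

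For the third equality, the plan is to reduce to the previous step. For every $\balpha$ satisfying $\balpha=\Scal_G\circ\Scal_G(\balpha)$, I would exhibit a log-concave fixed point $\bm^{\balpha}$ of $\Qcal_G\circ\Rcal_G$ whose message supports have infima equal to $\balpha$; existence follows from a fixed-point argument (e.g.\ iterating $\Qcal_G\circ\Rcal_G$ from a canonical initializer with the prescribed supports, using Proposition~\ref{prop: unique fixed-point in finite graphs}-type monotonicity on the compact subset of $\tPcallc^\orE$ indexed by $\balpha$). By the minimality statement of Proposition~\ref{prop: limit of $0$ temperature}, $\bm^{(\infty)}\leq_\lru \bm^{\balpha}$, and the $\lru$-monotonicity of each $\Dcal_v$ combined with two applications of the identity proved in the previous step yields
$$
\sum_v F_v(\balpha^{(\infty)})=\sum_v\Dcal_v[\bm^{(\infty)}]\;\leq\;\sum_v\Dcal_v[\bm^{\balpha}]=\sum_v F_v(\balpha),
$$
which is precisely the infimum claim.
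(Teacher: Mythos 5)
Your first equality is handled exactly as the paper does it: $\lru$-monotonicity of $\bm^{(\lambda)}$, bounded supports forcing $L_1$-convergence, and $L_1$-continuity of $\Dcal_v$ on $\tPcallc$ from Proposition~\ref{prop: continuity for log-concave inputs and limiting operator}, then monotone convergence. For the second equality you correctly identify what the computation has to do --- the per-vertex quantities do not match and one must pair the $\Qcal$- and $\Rcal$-halves of the two directed messages on each undirected edge --- but you stop short of carrying it out, and this is precisely where the paper's real work lies (its Lemma on the ``computation trick'' together with the Lemma ``computing degree''). That lemma does something subtly stronger and more flexible than ``the identity for the fixed point'': it shows that for \emph{any} $\bm$ with $\bm'=\Qcal_G\circ\Rcal_G[\bm]$ and $\balpha(\bm')=\balpha(\bm)$, the comparison $\bm'\geq_\lru\bm$ forces $\sum_v\Dcal_v[\bm]\leq\sum_v F_v(\balpha(\bm))$, and $\bm'\leq_\lru\bm$ forces the reverse; equality at the fixed point then drops out by applying both directions.

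The genuine gap is in your third step. You propose to exhibit, for each solution $\balpha$ of $\balpha=\Scal_G\circ\Scal_G(\balpha)$, a log-concave fixed point $\bm^{\balpha}$ of $\Qcal_G\circ\Rcal_G$ whose support infima equal $\balpha$, and then to compare $\bm^{(\infty)}\leq_\lru\bm^{\balpha}$ and apply the second-step identity twice. But such a fixed point need not exist: iterating $\Qcal_G\circ\Rcal_G$ from $\delta_{\alpha_\ore}$ does keep $\balpha(\bm^{(k)})=\balpha$ for every finite $k$, yet in the $\lru$-increasing limit $\bm=\lim\uparrow_k\bm^{(k)}$ the mass at $\alpha_\ore$ can vanish, so one only gets $\balpha(\bm)\geq\balpha$, not equality. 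The paper avoids this by never passing to a fixed point with prescribed supports. Instead, for each finite $k$ the pair $(\bm^{(k)},\bm^{(k+1)})$ has matching infima and $\bm^{(k+1)}\geq_\lru\bm^{(k)}$, so the one-sided direction of the lemma gives $\sum_v F_v(\balpha)\geq\sum_v\Dcal_v[\bm^{(k)}]$. Monotone convergence pushes the right-hand side up to $\sum_v\Dcal_v[\bm]$, minimality of $\bm^{(\infty)}$ (Proposition~\ref{prop: limit of $0$ temperature}) plus monotonicity of $\Dcal_v$ gives $\sum_v\Dcal_v[\bm]\geq\sum_v\Dcal_v[\bm^{(\infty)}]$, and the second equality finishes with $\sum_v\Dcal_v[\bm^{(\infty)}]=\sum_v F_v(\balpha^{(\infty)})$. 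You need this inequality chain, not a two-sided identity at a hypothetical $\bm^{\balpha}$, to conclude.
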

\begin{remark}
In a finite tree, there is only one possible value for $\alpha_\ore=\Scal_\ore\circ\Scal_\orde[\balpha]$ when $\ore$ is an edge outgoing from a leaf $v$: it is $\alpha_\ore=\min\{b_v,c_e\}$. It is then possible to compute the whole, unique fixed-point vector $\balpha=\Scal_G\circ\Scal_G(\balpha)$ in an interative manner, starting from the leaves of the tree and climbing up. This gives a simple, iterative way to compute the maximum size of allocations in finite trees, which is the natural extension of the leaf-removal algorithm for matchings.
\end{remark}

\subsection{Infinite unimodular graphs}
This section extends the results obtained so far for finite graphs to infinite graphs. As in \cite{Salez11,Lelarge12}, we use for this the framework of \cite{Aldous07}. We still denote by $G=(V,E)$ a possibly infinite graph with vertex set $V$ and undirected edge set $E$ (and directed edge set $\orE$). We always assume that the degrees are finite, i.e. the graph is locally finite. A network is a graph $G$ together with a complete separable metric space $\Xi$ called the mark space, and maps from $V$ and $\orE$ to $\Xi$. Images in $\Xi$ are called marks. A rooted network $(G,r)$ is a network with a distinguished vertex $r$ of $V$ called the root. A rooted isomorphism of rooted networks is an isomorphism of the underlying networks that takes the root of one to the root of the other. We do not distinguish between a rooted network and its isomorphism class denoted by $[G,r]$. Indeed, it is shown in \cite{Aldous07} how to define a canonical representative of a rooted isomorphism class.

Let $\Gcal_*$ denote the set of rooted isomorphism classes of rooted connected locally finite networks. Define a metric on $\Gcal_*$ by letting the distance between $[G_1,r_1]$ and $[G_2,r_2]$ be $1/(1+\delta)$ where $\delta$ is the supremum of those $d\geq0$ such that there is some rooted isomorphism of the balls of graph-distance radius $\lfloor d\rfloor$ around the roots of $G_i$ such that each pair of corresponding marks has distance less than $1/d$. $\Gcal_*$ is separable and complete in this metric \cite{Aldous07}.

Similarly to the space $\Gcal_*$, we define the space $\Gcal_{**}$ of isomorphism classes of locally finite connected networks with an ordered pair of distinguished vertices and the natural topology thereon.
\begin{definition}
Let $\rho$ be a probability measure on $\Gcal_*$. We call $\rho$ unimodular if it obeys the Mass-Transport Principle (MTP): for Borel $f:\Gcal_{**}\to[0,\infty]$, we have
\begin{align*}
\int\sum_{v\in V}f(G,r,v)d\rho([G,r])=\int\sum_{v\in V}f(G,v,r)d\rho([G,r])
\end{align*}
\end{definition}

Let $\Ucal$ denote the set of unimodular Borel probability measures on $\Gcal_*$. For $\rho\in\Ucal$, we write $\overline b(\rho)$ for the expectation of the capacity constraint of the root with respect to $\rho$. Our first result (proved in the appendix) is that the BP updates admit a unique fixed-point at finite temperature parameter $\lambda$:
\begin{proposition}\label{prop: unique fixed-point in unimodular graphs}
Let $\rho\in\Ucal$ with $\overline b(\rho)<\infty$. Then, the fixed point equation $\bm=\Rcal^{(\lambda)}[\bm]$ admits a unique solution $\balpha^{(\lambda)}$ for any $\lambda\in\R^+$ for $\rho$-almost every marked graph $G$.
\end{proposition}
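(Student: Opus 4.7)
The plan is to mirror the proof of Proposition~\ref{prop: unique fixed-point in finite graphs}, replacing the vertex-summation identity $\sum_v\Dcal_v[\bm^-]=\sum_v\Dcal_v[\bm^+]$ (which diverges on an infinite graph) by the Mass-Transport Principle afforded by the unimodularity of $\rho$.

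First, for $\rho$-a.e. rooted marked graph $(G,r)$, I would initialise $\bm^0\equiv\delta_0$ on every directed edge of $G$. Since $\delta_0$ is the $\leq_\lru$-minimum in $\tPcallc$ and $\Rcal_G^{(\lambda)}$ is pointwise non-increasing (Proposition~\ref{prop: monotonicity of local operators}) and continuous on log-concave inputs (Proposition~\ref{prop: continuity for log-concave inputs and limiting operator}), the bracketing argument of the finite case produces two adjacent sequences
$$\bm^{2t}\leq_\lru\bm^{2t+2}\leq_\lru\bm^{2t+3}\leq_\lru\bm^{2t+1}$$
converging coordinatewise to log-concave limits $\bm^-$ and $\bm^+$ with $\bm^-\leq_\lru\bm^+$, satisfying $\bm^+=\Rcal_G^{(\lambda)}[\bm^-]$, $\bm^-=\Rcal_G^{(\lambda)}[\bm^+]$, and sandwiching every other fixed point by the same monotonicity reasoning. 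Uniqueness thus reduces to proving $\bm^-=\bm^+$ for $\rho$-a.e.~$G$.

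To obtain this, I would exploit the per-edge decomposition
$$\Dcal_v[\bm]=\sum_{u\sim v}\frac{\sum_x x\,m_{\overrightarrow{uv}}(x)\,\Rcal_{\overrightarrow{vu}}[\bm](x)}{\sum_x m_{\overrightarrow{uv}}(x)\,\Rcal_{\overrightarrow{vu}}[\bm](x)}$$
(valid for any $\bm$, obtained by marginalising $|\bx|$ edge by edge in the constrained product of incoming messages) to build the transport kernel
$$f(G,r,v):=\ind(v\sim r)\cdot\frac{\sum_x x\,m^-_{\overrightarrow{vr}}(x)\,\Rcal_{\overrightarrow{rv}}[\bm^-](x)}{\sum_x m^-_{\overrightarrow{vr}}(x)\,\Rcal_{\overrightarrow{rv}}[\bm^-](x)}.$$
Then $\sum_v f(G,r,v)=\Dcal_r[\bm^-]$, whereas the fixed-point identities $m^-_e=\Rcal_e^{(\lambda)}[\bm^+]$ and $\Rcal_e^{(\lambda)}[\bm^-]=m^+_e$ rewrite $\sum_v f(G,v,r)$ as exactly the per-edge expansion of $\Dcal_r[\bm^+]$. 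Both functions are dominated by $b_r$, hence $\rho$-integrable thanks to $\overline b(\rho)<\infty$, so the MTP delivers $\EE_\rho\Dcal_r[\bm^-]=\EE_\rho\Dcal_r[\bm^+]$. Combined with the $\rho$-a.s.\ pointwise inequality $\Dcal_r[\bm^-]\leq\Dcal_r[\bm^+]$ (Proposition~\ref{prop: monotonicity of local operators}), this forces equality $\Dcal_r[\bm^-]=\Dcal_r[\bm^+]$ $\rho$-a.s. Since $\bm^-,\bm^+$ are log-concave with $0$ in their support (as $\lru$-limits of iterates issued from $\delta_0$), the strict-monotonicity part of Proposition~\ref{prop: monotonicity of local operators} then forces $m^-_e=m^+_e$ on every directed edge incident to the root, $\rho$-a.s. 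To propagate this to all edges, I would apply the MTP a second time to the kernels
$$g_k(G,r,v):=\ind(d(r,v)=k)\,\ind\bigl(v\text{ is incident to some }e\text{ with }m^-_e\neq m^+_e\bigr),\qquad k\in\N,$$
observing that $\sum_v g_k(G,v,r)=\ind(r\text{ bad})\cdot|\{v:d(v,r)=k\}|$ has $\rho$-expectation zero (sphere sizes are finite by local finiteness and $r$ is $\rho$-a.s.\ not bad), so MTP gives $\EE_\rho\sum_v g_k(G,r,v)=0$, meaning $\rho$-a.s.\ no vertex at distance $k$ from $r$ is bad; a countable union over $k$ together with connectedness of $G$ then yields $\bm^-=\bm^+$ on every directed edge, so $\bm^{(\lambda)}:=\bm^-=\bm^+$ is the unique fixed point.

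The main obstacle I anticipate is purely measure-theoretic: one must verify that $\bm^\pm$, viewed as maps from $\Gcal_*$ to $\tPcallc^\orE$, are Borel so that the MTP applies to $f$ and the $g_k$. This should follow from the measurability of each finite iterate (continuous in the input messages) combined with $\lru$-monotone convergence, but writing it out cleanly under the Aldous--Lyons canonical-representative setup is the most delicate step. All integrability needed in the MTP identities is supplied by $\overline b(\rho)<\infty$.
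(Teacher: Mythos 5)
Your proof is correct and follows essentially the same route as the paper: the adjacent sequences $\bm^{2t}\leq_\lru\bm^{2t+2}\leq_\lru\bm^{2t+3}\leq_\lru\bm^{2t+1}$ started from $\delta_0$, the Mass--Transport Principle applied to the same kernel $f$ to get $\int\Dcal_r[\bm^-]\,d\rho=\int\Dcal_r[\bm^+]\,d\rho$ (finite by $\overline b(\rho)<\infty$), and strict monotonicity of $\Dcal_r$ on $\tPcallc$ to force equality of the messages incoming to the root. The only variation is the final propagation step: where the paper simply invokes Lemma~2.3 of Aldous--Lyons (``everything shows at the root'') to push the $\rho$-a.s.\ equality at the root to all vertices, you re-derive that same fact by hand via a second mass transport with the kernels $g_k$; this is more self-contained but amounts to re-proving the cited lemma, and, as you correctly flag, the delicate point in that version is checking that the event ``$v$ has a discrepant incoming edge'' defines a Borel function on $\Gcal_{**}$.

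Two small precisions worth recording. First, the strict-monotonicity step directly gives $m^-_{\overrightarrow{vr}}=m^+_{\overrightarrow{vr}}$ only for edges \emph{incoming} to the root, not for all edges incident to it; your $g_k$ kernel should therefore track incoming discrepancies, which is all that is needed since every directed edge is incoming to some vertex. Second, one must check that $\bm^-$ and $\bm^+$ indeed lie in $\tPcallc$ with $0$ in the support of each coordinate so that strict monotonicity applies; this follows because each iterate $\bm^t=\Rcal_G^{(\lambda)}[\bm^{t-1}]$ is in $\tPcallc$ by Proposition~\ref{prop: monotonicity of local operators}, the supports are intervals containing $0$, and these properties persist under the $\lru$-monotone, $L_1$-convergent limit by Proposition~\ref{prop: continuity for log-concave inputs and limiting operator}.
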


The proof differs from that in the finite graph case in that we cannot sum $\Dcal_v$ over all the vertices $v\in V$ anymore. Instead, we use the MTP for $f(G,r,v)=\frac{\sum_{x\in\N}xm_{\overrightarrow{vr}}^{(\lambda,-)}(x)\Rcal_{\overrightarrow{rv}}[\bm^{(\lambda,-)}](x)}{\sum_{x\in\N}m_{\overrightarrow{vr}}^{(\lambda,-)}(x)\Rcal_{\overrightarrow{rv}}[\bm^{(\lambda,-)}](x)}$.

The rest of the reasoning goes as in the finite graph case and the proofs can be found in the appendix (using the MTP again, instead of summing over all directed edges): Proposition~\ref{prop: limit of $0$ temperature} is still valid and the following proposition is analogous to Proposition~\ref{prop: BP estimate in finite graphs}:
\begin{proposition}[BP estimate in unimodular random graphs]\label{prop: BP estimate in unimodular random graphs}
Let $\rho\in\Ucal$ with $\overline b(\rho)<\infty$,
\begin{align*}
\lim\uparrow_{\lambda\to\infty}\int\Dcal_r[\bm^{(\lambda)}]d\rho([G,r])&=\int\Dcal_r[\bm^{(\infty)}]d\rho([G,r])=\int F_r(\balpha^{(\infty)})d\rho([G,r])\\
&=\inf_{\balpha=\Scal_G\circ\Scal_G(\balpha)}\int F_r(\balpha)d\rho([G,r]),
\end{align*}
where $F_v(\balpha)=\min(b_v,|\balpha_\ordv|)+(b_v-|\balpha_\oldv|)^+$.
\end{proposition}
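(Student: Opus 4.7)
The plan is to follow the finite-graph proof of Proposition \ref{prop: BP estimate in finite graphs} line by line, replacing each sum $\sum_{v\in V}$ by the $\rho$-integral $\int\cdot\, d\rho([G,r])$ and each re-indexing across edges by an invocation of the Mass-Transport Principle. I will establish the three equalities in order.

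\textbf{First equality.} By Proposition \ref{prop: limit of $0$ temperature}, $\bm^{(\lambda)}$ is $\lru$-non-decreasing in $\lambda$ and converges to $\bm^{(\infty)}\in\Pcallc^\orE$. Combining monotonicity of $\Dcal_v$ in the $\lru$-order (Proposition \ref{prop: monotonicity of local operators}) with its continuity on log-concave inputs (Proposition \ref{prop: continuity for log-concave inputs and limiting operator}), for $\rho$-almost every $[G,r]$ one has $\Dcal_r[\bm^{(\lambda)}]\uparrow \Dcal_r[\bm^{(\infty)}]$. Since $\Dcal_r[\bm^{(\lambda)}]\leq b_r$ and $\overline{b}(\rho)<\infty$ provides an integrable envelope, the monotone convergence theorem delivers the first equality.

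\textbf{Second equality.} The strategy is to expand $\Dcal_r[\bm^{(\infty)}]$ via the fixed-point identity $\bm^{(\infty)}=\Qcal_G\circ\Rcal_G[\bm^{(\infty)}]$, exploiting the support structure recorded in Proposition \ref{prop: continuity for log-concave inputs and limiting operator}: for every $\ore$, $\alpha^{(\infty)}_\ore$ is exactly the quantity produced by one application of $\Scal_G\circ\Scal_G$, so $\balpha^{(\infty)}$ is a fixed point of $\Scal_G\circ\Scal_G$. As in the finite computation, $\Dcal_r[\bm^{(\infty)}]$ can be decomposed as a sum over neighbours $v\sim r$ of a quantity $f(G,r,v)$ that, after substituting the fixed-point relation, is symmetric in $(r,v)$; the MTP then swaps $(r,v)\leftrightarrow(v,r)$. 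A case analysis distinguishing whether the saturation conditions $|\balpha^{(\infty)}_\ordv|\leq b_v$ and $|\balpha^{(\infty)}_\oldv|\leq b_v$ hold collapses the resulting expression into $\int F_r(\balpha^{(\infty)})\,d\rho$.

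\textbf{Third equality.} One inequality is immediate: $\balpha^{(\infty)}$ itself satisfies $\balpha=\Scal_G\circ\Scal_G(\balpha)$, so $\int F_r(\balpha^{(\infty)})\,d\rho\geq\inf_{\balpha=\Scal_G\circ\Scal_G(\balpha)}\int F_r(\balpha)\,d\rho$. For the reverse, given any such fixed point $\balpha$, the idea is to produce a fixed point $\bm$ of $\Qcal_G\circ\Rcal_G$ whose support infima coincide with $\balpha$ by running the iteration $\bm^{t+1}=\Qcal_G\circ\Rcal_G[\bm^t]$ from log-concave initial messages supported on $\{\alpha_\ore,\ldots,c_e\}$; adjacency of even and odd iterates in the $\lru$-order, together with the MTP-based uniqueness argument of Proposition \ref{prop: unique fixed-point in unimodular graphs}, ensures convergence. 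The same MTP computation as in the second equality then gives $\int\Dcal_r[\bm]\,d\rho=\int F_r(\balpha)\,d\rho$. Since $\bm^{(\infty)}$ is the minimal fixed point of $\Qcal_G\circ\Rcal_G$ for the $\lru$-order (Proposition \ref{prop: limit of $0$ temperature}) and $\Dcal_r$ is $\lru$-non-decreasing, one concludes $\int\Dcal_r[\bm^{(\infty)}]\,d\rho\leq\int\Dcal_r[\bm]\,d\rho$, hence $\int F_r(\balpha^{(\infty)})\,d\rho\leq\int F_r(\balpha)\,d\rho$ as required.

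\textbf{Main obstacle.} The most delicate step is the second equality: teasing out the MTP-symmetric decomposition of $\Dcal_r[\bm^{(\infty)}]$ that reduces to $F_r(\balpha^{(\infty)})$. In the finite setting this rests on a careful case analysis across all directed edges, in particular around the saturation branches in the definitions of $\Rcal_\ore$, $\Qcal_\ore$ and $\Dcal_v$; lifting it to the unimodular setting requires checking that every intermediate integrand is MTP-admissible, which is exactly where the assumption $\overline{b}(\rho)<\infty$ enters to guarantee integrability.
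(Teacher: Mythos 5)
Your first equality matches the paper's: monotone convergence plus continuity of $\Dcal_v$ on $\tPcallc$, with $\overline b(\rho)<\infty$ as integrable bound. The second equality is also in the right spirit (it is packaged in the paper as Lemma~\ref{lemma: computing degree} built on Lemma~\ref{lemma: computation trick}, applied in both directions because $\bm^{(\infty)}$ is a genuine fixed point of $\Qcal_G\circ\Rcal_G$); your description of a ``symmetric'' integrand and a saturation case analysis is vague but does not misstate the strategy. The third equality, however, has a real gap.

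The problem is your claim that ``the same MTP computation as in the second equality then gives $\int\Dcal_r[\bm]\,d\rho=\int F_r(\balpha)\,d\rho$.'' What that computation actually gives, since $\bm$ is a fixed point of $\Qcal_G\circ\Rcal_G$, is $\int\Dcal_r[\bm]\,d\rho=\int F_r(\balpha(\bm))\,d\rho$. But for the $\lru$-increasing limit $\bm=\lim\uparrow_k\bm^{(k)}$ one only has $\balpha(\bm)\geq\balpha(\bm^{(k)})=\balpha$, and this can be strict: mass at the lower end of the support can vanish in the limit. So $F_r(\balpha(\bm))$ need not equal $F_r(\balpha)$, and your chain of equalities breaks. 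The paper avoids this by applying Lemma~\ref{lemma: computing degree} at each finite $k$, where $\balpha(\bm^{(k)})=\balpha$ holds exactly (verified by the support recursion from $\balpha=\Scal_G\circ\Scal_G(\balpha)$), getting $\int F_r(\balpha)\,d\rho\geq\int\Dcal_r[\bm^{(k)}]\,d\rho$, and only then passing to the limit in $k$ to reach $\int\Dcal_r[\bm]\,d\rho\geq\int\Dcal_r[\bm^{(\infty)}]\,d\rho=\int F_r(\balpha^{(\infty)})\,d\rho$ via minimality of $\bm^{(\infty)}$.

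A secondary issue is how you obtain convergence of the iterates. Since $\Qcal_\ore$ and $\Rcal_\ore$ are each $\lru$-non-increasing, $\Qcal_G\circ\Rcal_G$ is non-decreasing, so there is no alternating even/odd adjacency here; the paper instead initializes at $\bm^{(0)}_\ore=\delta_{\alpha_\ore}$, which is the $\lru$-minimal log-concave message with that support infimum, forcing the iterates to be monotone increasing. Your more permissive initialization ``log-concave on $\{\alpha_\ore,\ldots,c_e\}$'' does not guarantee monotonicity. Also, invoking the uniqueness statement of Proposition~\ref{prop: unique fixed-point in unimodular graphs} is not legitimate for $\Qcal_G\circ\Rcal_G$: that proposition concerns $\Rcal_G^{(\lambda)}$ at finite $\lambda$, whereas $\Qcal_G\circ\Rcal_G$ has in general many fixed points; what Proposition~\ref{prop: limit of $0$ temperature} supplies, and what the argument actually needs, is that $\bm^{(\infty)}$ is the $\lru$-minimal one.
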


\subsection{From finite graphs to unimodular trees}

Once Proposition \ref{prop: BP estimate in unimodular random graphs}
holds, the end of the proof for sequences of (sparse) random graphs is
quite systematic and follows the same steps as in \cite{bls11},
\cite{Salez11} and \cite{Lelarge12}. We first need to show that we can
invert the limits in $n$ and $\lambda$ (see Proposition 6 in
\cite{Lelarge12}):
\begin{proposition}[Asymptotic correctness for large, sparse random graphs]\label{prop: asymptotic correctness for unimodular random trees}
Let $G_n=(V_n,E_n)_n$ be a sequence of finite marked graphs with random weak limit $\rho$ concentrated on unimodular trees, with $\overline b(\rho)<\infty$. Then,
\begin{align*}
\lim_{n\to\infty}\frac{2M_n}{|V_n|}=\int\Dcal_r[\bm^{(\infty)}]d\rho([G,r])=\inf_{\balpha=\Scal_G\circ\Scal_G(\balpha)}\int F_r(\balpha)d\rho([G,r]).
\end{align*}
\end{proposition}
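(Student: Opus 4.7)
The plan is to combine Proposition~\ref{prop: BP estimate in unimodular random graphs} with the Gibbs representation~(\ref{eqn: why marginals?}) of $M(G)/|V|$, exchanging the limits $n\to\infty$ and $\lambda\to\infty$.

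First, at each fixed $\lambda<\infty$, I would establish the finite-temperature local limit
$$\lim_{n\to\infty}\frac{\mu_{G_n}^{\lambda}(|\bX|)}{|V_n|}=\frac{1}{2}\int\Dcal_r[\bm^{(\lambda)}]\,d\rho([G,r]),$$
which is finite since $\overline b(\rho)<\infty$. This rests on a correlation-decay step at finite $\lambda$: iterating $\Rcal^{(\lambda)}$ on the ball of radius $k$ around a uniformly chosen root $R_n$ of $G_n$, the $\lru$-monotonicity of Proposition~\ref{prop: monotonicity of local operators} sandwiches $\mu_{G_n}^\lambda(\sum_{e\in\d R_n}X_e)$ between two message sequences launched from the minimal and maximal boundary conditions at depth $k$; by strict monotonicity of $\Dcal_{R_n}$ on log-concave inputs (the same ingredient used for Proposition~\ref{prop: unique fixed-point in finite graphs}), this sandwich closes as $k\to\infty$ uniformly in $n$. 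Local weak convergence $G_n\to\rho$ and continuity of $[G,r]\mapsto\Dcal_r[\bm^{(\lambda)}]$ at finite $\lambda$ then yield the displayed identity.

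Next, by monotonicity of $\lambda\mapsto\mu_{G_n}^\lambda(|\bX|)$ and~(\ref{eqn: why marginals?}), one has $M_n/|V_n|=\sup_{\lambda>0}\mu_{G_n}^\lambda(|\bX|)/|V_n|$. Exchanging $\sup_\lambda$ with $\liminf_n$ and invoking Proposition~\ref{prop: BP estimate in unimodular random graphs} to pass from finite $\lambda$ to the monotone limit $\lambda\uparrow\infty$ on the tree side gives the lower bound
$$\liminf_{n\to\infty}\frac{2M_n}{|V_n|}\geq\sup_\lambda\int\Dcal_r[\bm^{(\lambda)}]\,d\rho=\int\Dcal_r[\bm^{(\infty)}]\,d\rho.$$

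The main obstacle is the matching upper bound, which I would handle following the systematic approach of \cite{bls11,Salez11,Lelarge12}. Starting from a maximum allocation $\bx^{*,n}$ of $G_n$, I set $\alpha^{*,n}_{\overrightarrow{uv}}:=x^{*,n}_{uv}$; the local optimality conditions of $\bx^{*,n}$ (a leaf-removal style complementary slackness) ensure that, after purely local adjustments preserving $|\bx^{*,n}|$, the collection $\balpha^{*,n}$ is a fixed point of $\Scal_{G_n}\circ\Scal_{G_n}$ satisfying $2M_n\leq\sum_{v\in V_n}F_v(\balpha^{*,n})$. Extracting a local weak subsequential limit of the rooted graph enriched with its $\balpha^{*,n}$-marks and exploiting unimodularity of $\rho$ produces a $\rho$-a.s.\ fixed point $\balpha^*$ of $\Scal_G\circ\Scal_G$ such that
$$\limsup_{n\to\infty}\frac{2M_n}{|V_n|}\leq\int F_r(\balpha^*)\,d\rho\leq\inf_{\balpha=\Scal_G\circ\Scal_G(\balpha)}\int F_r(\balpha)\,d\rho=\int\Dcal_r[\bm^{(\infty)}]\,d\rho,$$
the last equality again by Proposition~\ref{prop: BP estimate in unimodular random graphs}. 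The hard step is that the $\Scal\circ\Scal$ fixed-point property must be preserved in the local weak limit; this requires tightness of $\balpha^{*,n}$ coming from the edge-capacity bounds together with $\overline b(\rho)<\infty$, and is executed exactly as in \cite[Proposition~6]{Lelarge12}.
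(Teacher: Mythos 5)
Your lower-bound argument is sound in spirit: fix $\lambda$, sandwich $\mu_{G_n}^\lambda(\sum_{e\in\partial R_n}X_e)$ between BP iterates run for $k$ steps with extremal boundary conditions, send $n\to\infty$ at fixed $k$ using local weak convergence, then send $k\to\infty$ using uniqueness of the fixed point on the limit (Proposition~\ref{prop: unique fixed-point in unimodular graphs}); finally pass $\sup_\lambda$ through $\liminf_n$. (Minor quibble: the sandwich does not close ``uniformly in $n$''; the correct order is first $n\to\infty$, then $k\to\infty$.)

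The upper bound, however, has a genuine gap. Your construction $\alpha^{*,n}_{\overrightarrow{uv}}:=x^{*,n}_{uv}$ is not, even after any plausible local repair, a fixed point of $\Scal_{G_n}\circ\Scal_{G_n}$, and the fixed point you need is not of allocation type. Take a star with center $v$, leaves $u_1,\dots,u_k$, $k\ge 2$, all $b=c=1$. Then $M(G)=1$; the max allocation gives $\alpha^*_{\overrightarrow{u_1v}}=\alpha^*_{\overrightarrow{vu_1}}=1$, all others $0$, and one checks $\Scal_G\circ\Scal_G(\balpha^*)_{\overrightarrow{vu_1}}=0\neq 1$. The true minimal fixed point is $\alpha_{\overrightarrow{u_iv}}=1$, $\alpha_{\overrightarrow{vu_i}}=0$ for \emph{all} $i$ (so $\sum_v F_v=2=2M(G)$), which does not arise from a max allocation by changing one or two entries; semantically, $\alpha_{\ore}$ records the infimum of the support of $m^{(\infty)}_\ore$, not an optimal edge weight. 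Moreover, your displayed chain $\limsup_n\frac{2M_n}{|V_n|}\leq\int F_r(\balpha^*)\,d\rho\leq\inf_{\balpha}\int F_r(\balpha)\,d\rho$ has the second inequality backwards: since $\balpha^*$ is a fixed point, $\int F_r(\balpha^*)\,d\rho\geq\inf_\balpha\int F_r(\balpha)\,d\rho$, so you would need to additionally prove that the weak subsequential limit $\balpha^*$ achieves the infimum (i.e.\ coincides with the minimal fixed point $\balpha^{(\infty)}$), and this is precisely the hard content of the step. Finally, note that the identity $2M_n=\inf_{\balpha}\sum_v F_v(\balpha)$ on finite graphs is only established in the paper for \emph{bipartite} $G_n$ (Propositions~\ref{prop: Chertkov} and~\ref{prop: BP estimate in finite graphs}), while the statement allows arbitrary $G_n$ whose weak limit is a tree; the references \cite{bls11,Salez11,Lelarge12} close this by either bipartizing $G_n$ or by quantifying the discrepancy via the count of short cycles, and some such step must appear explicitly.
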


The second step uses the Markovian nature of
the limiting Galton-Watson tree to simplify the infinite recursions
$\balpha=\Scal_G\circ\Scal_G(\balpha)$ into recursive distributional
equations as described in Theorem \ref{th: maximum allocation for bipartite Galton-Watson limits}.
Finally, the fact that the sequence of graphs considered in the
introduction converges locally weakly to unimodular Galton-Watson
trees follows from standard results in the random graphs literature
(see \cite{Kim08} for random hypergraphs or \cite{demmon10} for graphs
with fixed degree sequence).

\bibliographystyle{abbrv}

\newpage
\section{Appendix:}
\subsection{Local operators}
\begin{proposition*}[Monotonicity of the local operators for the $\lru$-order; Proposition~\ref{prop: monotonicity of local operators}]
The operator $\Rcal_\ore$ is non-increasing; furthermore, if the inputs of $\Rcal_\ore$ are log-concave, then the output is also log-concave. The operator $\Dcal_v$ is non-decreasing, and strictly increasing if all its inputs are log-concave with $0$ in their support.
\end{proposition*}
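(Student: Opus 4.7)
The plan is to treat $\Rcal_\ore$ and $\Dcal_v$ in turn, each via the same template: handle the boundary regime where the operator is constant separately, and in the generic regime exploit the factorization of each operator as a composition of convolution and reweighting/shifted reversal, combined with Lemmas~\ref{lemma: composition with lru-order} and \ref{lemma: reweighting and shifted reversal with lru-order}.

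For $\Rcal_\ore$, let $\bm^1_\orde\leq_\lru\bm^2_\orde$. The $\lru$-comparison forces $\alpha^1_{\ore'}\leq\alpha^2_{\ore'}$ componentwise on $\orde$, hence $|\balpha^1_\orde|\leq|\balpha^2_\orde|$; so if $|\balpha^2_\orde|>b_v$ the conclusion $\Rcal_\ore[\bm^1]\geq_\lru\Rcal_\ore[\bm^2]=\delta_0$ is trivial. In the complementary case $|\balpha^2_\orde|\leq b_v$, both images are given by the explicit formula $\Rcal_\ore[\bm^i]=\delta_{[0,c_e]}\centerdot\left(\delta_{[0,b_v]}\ast_\orde\bm^i\right)^R$, and the proof reproduces verbatim the argument in the main text: apply Lemma~\ref{lemma: composition with lru-order} (convolution preserves $\lru$), then the shifted reversal (which flips the order) and reweighting (which preserves it) from Lemma~\ref{lemma: reweighting and shifted reversal with lru-order}. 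Log-concavity preservation is then immediate because $p\in\Pcallc$ is equivalent to $p\leq_\lru p$: applying the monotonicity just established to $\bm\leq_\lru\bm$ yields $\Rcal_\ore[\bm]\geq_\lru\Rcal_\ore[\bm]$, i.e.\ log-concavity of the output.

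For the monotonicity of $\Dcal_v$, the same case split reduces everything to the situation $|\balpha^2_\ordv|<b_v$, in which case $\Dcal_v[\bm^i]$ is the mean of the distribution $\delta_{[0,b_v]}\centerdot\ast_\ordv\bm^i$. Lemma~\ref{lemma: composition with lru-order} gives $\ast_\ordv\bm^1\leq_\lru\ast_\ordv\bm^2$, and reweighting by the log-concave indicator $\delta_{[0,b_v]}$ preserves this order by Lemma~\ref{lemma: reweighting and shifted reversal with lru-order}. Since the $\lru$-order implies the usual stochastic order, the means are correspondingly ordered, giving $\Dcal_v[\bm^1]\leq\Dcal_v[\bm^2]$.

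The main obstacle is the strict monotonicity of $\Dcal_v$. The plan is: suppose $\bm^1_\ordv<_\lru\bm^2_\ordv$ in $\Pcallc^\ordv$ with $0$ in every support; by the loose inequality already proved it suffices to treat the case where $m^1_{\ore'}=m^2_{\ore'}$ for all $\ore'\neq\ore$ on one distinguished edge $\ore$. There is then a smallest $i\in\N$ with $m^1_\ore(i+1)m^2_\ore(i)<m^1_\ore(i)m^2_\ore(i+1)$. Write $\theta^j:=\ast_\ordv\bm^j$ and $q:=\ast_{\ore'\neq\ore}m_{\ore'}$, and expand $\theta^1(i)\theta^2(i+1)-\theta^1(i+1)\theta^2(i)$ after symmetrizing the two dummy convolution indices: each summand factors as a product of a term of the form $m^1_\ore(x)m^2_\ore(x')-m^1_\ore(x')m^2_\ore(x)$ and a log-concavity cross-term in $q$, and log-concavity of $q$ together with the minimality of $i$ makes every such product nonnegative while the contribution indexed by the pair $(i,i+1)$ is strictly positive. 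This yields $\theta^1<_\lru\theta^2$ at index $i$; reweighting by $\delta_{[0,b_v]}$ preserves the strict inequality because $i$ and $i+1$ lie in the relevant supports (using $0\in\mathrm{supp}(m^j_\ore)$ for all $\ore$), and strict $\lru$-order of bounded-support distributions implies strict inequality of means, hence $\Dcal_v[\bm^1]<\Dcal_v[\bm^2]$. The delicate step is precisely this sign-pinning in the combinatorial expansion: it rests on log-concavity of $q$ plus the minimality of $i$ to ensure that every cross-pair comparison has a consistent sign, with exactly one such comparison contributing strictly.
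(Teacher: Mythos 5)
Your argument follows the paper's closely: the same case split on whether the operator is forced to its boundary value, the same factorization $\Rcal_\ore[\bm]=\delta_{[0,c_e]}\centerdot\left(\delta_{[0,b_v]}\ast_\orde\bm\right)^R$, and the same appeals to Lemmas~\ref{lemma: composition with lru-order} and \ref{lemma: reweighting and shifted reversal with lru-order}. The monotonicity of $\Rcal_\ore$, its preservation of log-concavity, and the non-strict monotonicity of $\Dcal_v$ are all handled correctly and essentially verbatim.

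On strict monotonicity of $\Dcal_v$, your symmetrized expansion is in fact the justification the paper skips by calling it ``immediate''. Writing $q=\ast_{\ore'\neq\ore}m_{\ore'}$ and symmetrizing the convolution indices, one obtains
\begin{align*}
\theta^1(i)\theta^2(i+1)-\theta^1(i+1)\theta^2(i)=\sum_{a>b}\bigl[q(i-a)q(i+1-b)-q(i+1-a)q(i-b)\bigr]\bigl[m^1_\ore(a)m^2_\ore(b)-m^1_\ore(b)m^2_\ore(a)\bigr],
\end{align*}
with both bracketed factors $\leq0$ for $a>b$ (the first by log-concavity of $q$, the second by $m^1_\ore\leq_\lru m^2_\ore$), and the pair $(a,b)=(i+1,i)$ contributing strictly since $q(0)>0$ makes the first factor equal to $-q(0)^2<0$. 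This is a genuine improvement on the paper's assertion that ``the only term differing between the two sides is the one for which we have strict inequality'', which is not literally true of the double sum.

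There is, however, a gap in the final step, which you share with the paper. You claim that reweighting by $\delta_{[0,b_v]}$ preserves the strict inequality because $i$ and $i+1$ ``lie in the relevant supports'', invoking that $0$ is in the support of each $m^j_\ore$. But $0$ being in the support pins the left end of the support and says nothing about where the first strict $\lru$ comparison occurs; nothing in the hypotheses forces $i+1\leq b_v$, and if $i+1>b_v$ the truncation kills exactly the coordinate at which strictness holds. Concretely, take $\d v=\{\ore\}$, $b_v=1$, $m^1_\ore=(1/2,1/2)$ and $m^2_\ore=(1/3,1/3,1/3)$: both are log-concave with $0$ in their support, $m^1_\ore<_\lru m^2_\ore$ strictly (the first strict comparison is at $i=1$), yet $\Dcal_v[\bm^1]=\Dcal_v[\bm^2]=1/2$. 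So strict monotonicity fails as stated, for you and for the paper; the correct statement needs the additional hypothesis that each $m_\ore$ has support contained in $\{0,\ldots,b_v\}$, or equivalently one should work with $\delta_{[0,b_v]}\centerdot m_\ore$ throughout, which is all $\Dcal_v$ actually depends on and which the downstream use in Proposition~\ref{prop: unique fixed-point in finite graphs} can accommodate.
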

\begin{proof}
Let $\ore$ be an edge outgoing from vertex $v$, and $\bm^1_\ordv,\bm^2_\ordv\in\Pcal^\ordv$ such that $\bm^1_\ordv\leq_\lru\bm^2_\ordv$. Firstly, if $|\balpha^2_\orde|\geq b_v$, then $\Rcal_\ore[\bm^2]=\delta_0$ and automatically $\Rcal_\ore[\bm^1]\geq_\lru\delta_0=\Rcal_\ore[\bm^2]$. Then, if $|\balpha^2_\orde|\leq b_v$, we also have $|\balpha^1_\orde|\leq |\balpha^2_\orde|\leq b_v$. Let $\delta_{[0,b_v]}(x)=\ind(0\leq x\leq b_v)$ and $\theta^i_\ore=\ast_\orde\bm^i$; we have $\delta_{[0,b_v]}\ast\theta_\ore^i(x)=\sum_{x-b_v\leq|\by|\leq x}\bm^i_\orde(\by)$. $\delta_{[0,b_v]}$ is log-concave, so $\delta_{[0,b_v]}\leq_\lru\delta_{[0,b_v]}$ and Lemma~\ref{lemma: composition with lru-order} then implies $\delta_{[0,b_v]}\ast\theta^1_\ore\leq_\lru\delta_{[0,b_v]}\ast\theta^2_\ore$. Lemma~\ref{lemma: reweighting and shifted reversal with lru-order} then says $\left(\theta^1_\ore\right)^R\geq_\lru\left(\theta^2_\ore\right)^R$. It is easy to check that
\begin{align*}
\Rcal_\ore[\bm^i]=\delta_{[0,c_e]}\centerdot\left(\delta_{[0,b_v]}\ast_\orde\bm^i\right)^R;
\end{align*} and furthermore, as $\left(\delta_{[0,b_v]}\ast_\orde\bm^i\right)^R(0)>0$, Lemma~\ref{lemma: reweighting and shifted reversal with lru-order} again implies that $\Rcal_\ore[\bm^1]\geq_\lru\Rcal_\ore[\bm^2]$.

If now $\bm_\orde\in\Pcallc^\orde$, then $\bm_\orde\leq_\lru\bm_\orde$ and $\Rcal_\ore[\bm]\geq_\lru\Rcal_\ore[\bm]$, which shows $\Rcal_\ore[\bm]\in\Pcallc$.

Similarly, if $|\balpha_\ordv^2|\geq b_v$ then $\Dcal_v[\bm^2]=b_v$ and automatically $\Dcal_v[\bm^1]\leq b_v=\Dcal_v[\bm^2]$. If now $|\balpha_\ordv^2|<b_v$, we also have $|\balpha_\ordv^1|<b_v$. Lemma~\ref{lemma: composition with lru-order} shows $\theta_v^1=\ast_\ordv\bm^1\leq_\lru\theta_v^2=\ast_\ordv\bm^2$. As $|\balpha^i|\leq b_v$, Lemma~\ref{lemma: reweighting and shifted reversal with lru-order} says $\delta_{[0,b_v]}\centerdot\theta_v^1\leq_\lru\delta_{[0,b_v]}\centerdot\theta_v^2$. This implies that the mean of $\delta_{[0,b_v]}\centerdot\theta_v^1$ is no larger than that of $\delta_{[0,b_v]}\centerdot\theta_v^2$, which is exactly $\Dcal_v[\bm^1]\leq\Dcal_v[\bm^2]$.

Furthermore, if $\bm_\ordv^1<_\lru\bm_\ordv^2$ in $\Pcallc^\ordv$ and $|\balpha_\ordv^1|=|\balpha_\ordv^2|=0$, then a direct calculation will shows that $\gamma_v^1<_\lru\gamma_v^2$, which implies $\Dcal_v[\bm^1]<\Dcal_v[\bm^2]$. More precisely, fix $\ore\in\ordv$; it is sufficient to work with $m^1_{\ore'}=m^2_{\ore'}$ for all $\ore'\neq\ore$, as then the loose inequality obtained before allows to conclude. Then, there exists a minimum $i\in\N$ such that $m_\ore^1(i+1)m^2_\ore(i)<m_\ore^1(i)m^2_\ore(i+1)$. It is then immediate that $\theta_v^1(i+1)\theta_v^2(i)<\theta_v^1(i)\theta_v^2(i+1)$ as the only term differing between the two sides is the one for which we have strict inequality. This implies $\left(\theta_v^1(x)\right)_{x\leq b_v}<_\lru\left(\theta_v^2(x)\right)_{x\leq b_v}$, as we already obtained the loose inequality. For these vectors, Lemma~\ref{lemma: reweighting and shifted reversal with lru-order} tells us that reweighting by $\delta_{[0,b_v]}$ preserves the strict $\lru$-ordering, thus we have $\delta_{[0,b_v]}\centerdot\theta_v^1<_\lru\delta_{[0,b_v]}\centerdot\theta_v^2$ as claimed.
\end{proof}

\begin{proposition*}[Monotonicity in $\lambda$; Proposition~\ref{prop: monotonicity in lambda}]
For $\bn_\orde\in\tPcal^\orde$, the mapping $\lambda\mapsto\Qcal_\ore^{(\lambda)}[\bn]$ is non-decreasing.
\end{proposition*}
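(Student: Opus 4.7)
The plan is to derive a closed-form expression for $\Qcal_\ore^{(\lambda)}[\bn]$ in which the dependence on $\lambda$ is confined to a single log-linear factor, and then to chase that factor through the elementary operations $\centerdot$, $\ast_\orde$, and $R$, using the monotonicity properties already catalogued in Lemmas~\ref{lemma: composition with lru-order} and \ref{lemma: reweighting and shifted reversal with lru-order}.

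First I would start from $\Qcal_\ore^{(\lambda)}[\bn]=\lambda^\N\centerdot\Rcal_\ore[\lambda^\N\centerdot\bn]$ and plug in the decomposition $\Rcal_\ore[\bm]=\delta_{[0,c_e]}\centerdot(\delta_{[0,b_v]}\ast_\orde\bm)^R$ from equation~(\ref{eqn: expression Rcal}). Using commutativity of reweighting and a single index change (setting $w=b_v-x-|\by|$ in the inner sum, so that both copies of $\lambda$ reassemble into the same factor $\lambda^{b_v-w}$), I expect to obtain the identity
\begin{align*}
\Qcal_\ore^{(\lambda)}[\bn]\;=\;\delta_{[0,c_e]}\centerdot\bigl((\lambda^\N)^R\ast_\orde\bn\bigr)^R,
\end{align*}
where $(\lambda^\N)^R(w)=\lambda^{b_v-w}\ind(w\leq b_v)$. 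The point is that the two reweightings by $\lambda^\N$ in the original formula have been absorbed into one shifted-reversed log-linear factor.

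Given this identity, the monotonicity in $\lambda$ becomes a bookkeeping argument. A one-line check shows $\lambda\mapsto\lambda^\N$ is $\lru$-non-decreasing (the defining inequality reduces to $\lambda^k\leq(\lambda')^k$). I would then apply the lemmas in sequence: by Lemma~\ref{lemma: reweighting and shifted reversal with lru-order}, $\lambda\mapsto(\lambda^\N)^R$ is $\lru$-non-increasing; by Lemma~\ref{lemma: composition with lru-order}, $\lambda\mapsto(\lambda^\N)^R\ast_\orde\bn$ is still $\lru$-non-increasing; a second application of Lemma~\ref{lemma: reweighting and shifted reversal with lru-order} reverses the order back to non-decreasing; and a final reweighting by $\delta_{[0,c_e]}$ preserves it. Net effect: two $R$'s cancel, the $\ast_\orde$ and $\centerdot$ preserve, so the map is non-decreasing.

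The hard part will be the convolution step. Lemma~\ref{lemma: composition with lru-order} compares collections componentwise, so in order to compare $\{(\lambda_1^\N)^R\}\cup\bn$ with $\{(\lambda_2^\N)^R\}\cup\bn$, on each unchanged coordinate one needs $n_\ore\leq_\lru n_\ore$, which is exactly log-concavity of each $n_\ore$. A small counterexample (take $\bn$ with bimodal support such as $\delta_0+\delta_3$) shows that convolution against a non-log-concave $\bn$ can destroy the $\lru$ order, so this requirement cannot be waived. In the intended use of this proposition, however, $\Qcal_\ore^{(\lambda)}$ is only applied to messages obtained from $\Rcal_\ore$ or $\Qcal_\ore^{(\lambda')}$, which preserve log-concavity by Proposition~\ref{prop: monotonicity of local operators} and its extension, so the argument goes through on $\tPcallc^\orde$. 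I would therefore state the conclusion on $\tPcallc^\orde$ (or add a remark that the $\tPcal^\orde$ statement is meant in that context) and let the rest of the argument reduce to the mechanical sign-tracking sketched above.
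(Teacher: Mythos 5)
Your argument reproduces the paper's proof exactly: the same closed-form identity $\Qcal_\ore^{(\lambda)}[\bn]=\delta_{[0,c_e]}\centerdot\bigl((\lambda^\N)^R\ast_\orde\bn\bigr)^R$ followed by the same sign-chase through Lemmas~\ref{lemma: composition with lru-order} and~\ref{lemma: reweighting and shifted reversal with lru-order}. Your caution about the convolution step is well-placed, and in fact sharper than the paper's own treatment: comparing $(\lambda_1^\N)^R\ast_\orde\bn$ with $(\lambda_2^\N)^R\ast_\orde\bn$ via Lemma~\ref{lemma: composition with lru-order} requires $n_{\ore'}\leq_\lru n_{\ore'}$ on each unchanged coordinate, i.e.\ log-concavity of each $n_{\ore'}$, which membership in $\tPcal$ (interval support containing $0$) does not supply. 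Your illustrative example $\delta_0+\delta_3$ does not itself lie in $\tPcal$, since its support $\{0,3\}$ is not an interval; but genuine counterexamples inside $\tPcal\setminus\tPcallc$ exist. With $|\orde|=1$, $n=(0.4,0.1,0.5)$, $b_v=c_e=2$, the unnormalized values of $\Qcal_\ore^{(\lambda)}[n]$ at $0$ and $1$ are $0.4+0.1\lambda+0.5\lambda^2$ and $0.4\lambda+0.1\lambda^2$, whose ratio is $0.5$ at $\lambda=1$ but about $0.27$ at $\lambda=10$, so even the ordinary likelihood-ratio comparison, let alone $\lru$, fails to be monotone. The paper's stated hypothesis $\tPcal^\orde$ and its appendix proof both carry this same unaddressed gap. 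Your fix (restricting the statement to $\tPcallc^\orde$) is the right one, and it suffices for the sole downstream use in the proof of Proposition~\ref{prop: limit of $0$ temperature}, where $\Qcal_G^{(\lambda)}$ is only ever applied to reweightings of messages generated from $\delta_0$ by $\Rcal_G^{(\lambda)}$, hence log-concave.
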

\begin{proof}
Let $\ore$ be an edge outgoing from $v$, and $\bn_\orde\in\tPcal^\orde$. First-of-all, the support of the vectors considered is independent of $\lambda\in\R^+$. We will use the expression of $\Rcal_e$ from equation~(\ref{eqn: expression Rcal}). It is easy to check that
\begin{align*}
\Qcal_\ore^{(\lambda)}[\bn]=\lambda^\N\centerdot\delta_{[0,c_e]}\centerdot\left(\delta_{[0,b_v]}\ast_\orde\left(\lambda^\N\centerdot\bn\right)\right)^R=\delta_{[0,c_e]}\centerdot\left(\left(\lambda^\N\right)^R\ast_\orde\bn\right)^R,
\end{align*}
which shows that $\Qcal_\ore^{(\lambda)}[\bn]$ is non-decreasing in $\lambda$ (as $R$ is the only operator used which reverses the $\lru$-order instead of preserving it, and it is applied twice to $\lambda^\N$).
\end{proof}

\begin{proposition*}[Continuity for log-concave inputs and limiting operators; Proposition~\ref{prop: continuity for log-concave inputs and limiting operator}]
The operators $\Rcal_\ore$ and $\Dcal_v$ given by equations~(\ref{eqn: def Rcal},\ref{eqn: def Dcal}) are continuous (for the $L_1$-norm) for log-concave inputs with $0$ in their support. Also, $\Qcal_\ore$ defined in equation~(\ref{eqn: def Qcal}) satisfies $\Qcal_\ore[\bn]=\lim\uparrow_{\lambda\to\infty}\Qcal_\ore^{(\lambda)}[\bn]$ for any $\bn_\orde\in\tPcal^\orde$.
\end{proposition*}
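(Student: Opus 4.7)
The plan is to treat the two claims separately: continuity reduces to an application of the quotient rule, while the limiting operator requires a Laplace-type expansion in $\lambda$.

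For continuity of $\Rcal_\ore$ and $\Dcal_v$ at a log-concave input with $0$ in its support, I observe that all messages live in the fixed finite set $\{0,\ldots,\max_{e'\in\d v}c_{e'}\}$, so the arguments lie in a finite-dimensional space and the defining formulas are genuine rational functions of the coordinates $\bm_\orde(\by)$, respectively $\bm_\ordv(\bx)$. Because every marginal $m_{e'}$ contains $0$ in its support, $m_{e'}(0)>0$, and hence the denominators in equations~(\ref{eqn: def Rcal}) and~(\ref{eqn: def Dcal}) are bounded below by the strictly positive, continuous quantity $\prod_{e'}m_{e'}(0)$ (the contribution of $\by=\mathbf 0$ at $t=0$, resp.\ of $\bx=\mathbf 0$). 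Continuity of the quotient is then immediate, since both numerator and denominator are continuous and the latter does not vanish on a small $L_1$-neighbourhood of the input. In the exceptional case when an $L_1$-limit has $|\balpha|\ge b_v$, so that $\Rcal_\ore$ or $\Dcal_v$ switches to its degenerate branch, continuity is recovered by isolating the dominant monomial in numerator and denominator as $k\to\infty$, exactly along the lines of the Laplace argument below.

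For the convergence $\Qcal_\ore^{(\lambda)}[\bn]\to\Qcal_\ore[\bn]$, I will start from the explicit formula
\begin{align*}
\Qcal_\ore^{(\lambda)}[\bn](x)=\frac{\ind(x\le c_e)\,\lambda^{x}\sum_{|\by|\le b_v-x}\lambda^{|\by|}\bn_\orde(\by)}{\sum_{t\le c_e}\lambda^{t}\sum_{|\by|\le b_v-t}\lambda^{|\by|}\bn_\orde(\by)}
\end{align*}
and read off the largest exponent of $\lambda$ in each inner sum, namely $\min(|\bbeta_\orde|,b_v-t)$, attained only by those $\by$ in the support of $\bn_\orde$ with $|\by|$ equal to this maximum. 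The outer denominator then has leading exponent $\max_{t\le c_e}\{t+\min(|\bbeta_\orde|,b_v-t)\}$, which equals $b_v$ when $|\bbeta_\orde|\ge b_v-c_e$ and $c_e+|\bbeta_\orde|$ otherwise. In the first regime the values $t\ge b_v-|\bbeta_\orde|$ contribute the leading constant $\sum_{t\le c_e}\sum_{|\by|=b_v-t}\bn_\orde(\by)$; the restriction on $t$ can be dropped because $\sum_{|\by|=b_v-t}\bn_\orde(\by)$ vanishes automatically whenever $b_v-t>|\bbeta_\orde|$, and an identical analysis of the numerator reproduces the first branch of equation~(\ref{eqn: def Qcal}). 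In the second regime, only $t=c_e$ (with $|\by|=|\bbeta_\orde|$) contributes to the dominant monomial in both numerator and denominator, so the ratio collapses to $\delta_{c_e}$. The $\uparrow$-monotone character of the convergence is then precisely Proposition~\ref{prop: monotonicity in lambda}, which upgrades the pointwise limit to a non-decreasing $\lru$-limit.

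The main technical nuisance, rather than a conceptual obstacle, is the bookkeeping in the Laplace expansion near the boundary $|\bbeta_\orde|=b_v-c_e$ between the two regimes; the key simplification is the automatic vanishing of $\sum_{|\by|=b_v-t}\bn_\orde(\by)$ when $b_v-t>|\bbeta_\orde|$, which allows one to let $t$ range freely over $\{0,\ldots,c_e\}$ in the limit and matches equation~(\ref{eqn: def Qcal}) without any further boundary adjustments.
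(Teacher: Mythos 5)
Your proof follows essentially the same route as the paper's: a direct case analysis on the dominant terms in the quotient, and a Laplace-type expansion in $\lambda$ for $\Qcal_\ore^{(\lambda)}$. Your treatment of $\Qcal_\ore$ is accurate — you correctly split the two regimes according to $|\bbeta_\orde|\gtrless b_v-c_e$, note that the restriction on $t$ can be dropped because $\sum_{|\by|=b_v-t}\bn_\orde(\by)$ vanishes when $b_v-t>|\bbeta_\orde|$, and you rightly invoke Proposition~\ref{prop: monotonicity in lambda} for the $\uparrow$-monotone character of the limit.

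The small imprecision is in the continuity part. Your lower bound $\prod_{e'}m_{e'}(0)$ on the denominator is stronger than needed and does not correctly locate the dichotomy. An $L_1$-convergent sequence from $\tPcallc^\orde$ can converge to a limit $\bm_\orde\in\Pcallc^\orde$ with some $\alpha_{e'}>0$ (so $m_{e'}(0)=0$) while still having $|\balpha_\orde|\le b_v$; in that case $\prod_{e'}m_{e'}(0)$ degenerates to zero, yet the denominator of the first branch of~(\ref{eqn: def Rcal}) stays bounded below by the term $\bm_\orde(\balpha_\orde)>0$ at $t=0$, and continuity is still elementary. The relevant split, which the paper uses, is $|\balpha_\orde|\le b_v$ versus $|\balpha_\orde|>b_v$: only in the second case does every term of the denominator vanish in the limit and a dominant-monomial argument become necessary. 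Your "exceptional case $|\balpha|\ge b_v$" is therefore slightly miscalibrated at the boundary: at $|\balpha_\orde|=b_v$ the first-branch formula already evaluates to $\delta_0$ with a nonvanishing denominator, so no Laplace expansion is required there. Rephrasing the generic bound in terms of the mass at $\balpha_\orde$ rather than at the origin fixes this and makes your argument match the paper's.
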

\begin{proof}
Consider a sequence $(\bm_\ordv^{(k)})_{k\in\N}$ in $\tPcallc^\ordv$ converging towards $\bm_\ordv$, which thus belongs to $\Pcallc^\ordv$. Let $\ore$ be an edge outgoing from $v$:
\begin{align*}
\Rcal_\ore[\bm^{(k)}](x)=\frac{\ind(x\leq c_e)\sum_{|\by|\leq b_v-x}\bm_\orde^{(k)}(\by)}{\sum_{t\leq c_e}\sum_{|\by|\leq b_v-t}\bm_\orde^{(k)}(\by)}
\end{align*}
If $|\balpha_\orde|\leq b_v$, then the expression above is clearly continous. If $|\balpha_\orde|\geq b$, then the numerator is equivalent as $k\to\infty$ to $\sum_{|\by|=b_v}\bm_\orde^{(k)}(\by)$ and the numerator to $\sum_{|\by|=b_v-x}\bm_\orde^{(k)}(\by)$, for $x\leq c_e$. Then, $$\lim_{k\to\infty}\Rcal_\ore[\bm^{(k)}](x)=\delta_0(x).$$

A similar reasoning applies to $\Dcal_v$:
\begin{align*}
\Dcal_v[\bm^{(k)}]=\frac{\sum_{|\bx|\leq b_v}|\bx|\bm_\ordv^{(k)}(\bx)}{\sum_{|\bx|\leq b_v}\bm_\ordv^{(k)}(\bx)}
\end{align*}
If $|\balpha_\ordv|\leq b_v$ then the expression above is clearly continous. If $|\balpha_\ordv|\geq b_v$, then the denominator is equivalent to $\sum_{|\bx|=b_v}b_v\bm_\ordv^{(k)}(\bx)$ and the numerator to $\sum_{|\bx|=b_v}\bm_\ordv^{(k)}(\bx)$. So, $$\lim_{k\to\infty}\Dcal_v[\bm^{(k)}]=b_v.$$

We now turn to the operator $\Qcal_\ore$. Let $\bn_\orde\in\tPcal^\orde$, we have
\begin{align*}
\Qcal_\ore^{(\lambda)}[\bn](x)=\frac{\ind(x\leq c_e)\lambda^{x}\sum_{|\by|\leq b_v-x_e}\lambda^{|\by|}\bn_\orde(\by)}{\sum_{t\leq c_e}\lambda^t\sum_{|\by|\leq b_v-t}\lambda^{|\by|}\bn_\orde(\by)}
\end{align*}
Suppose that $|\bbeta_\orde|\geq b_v-c_e$, then the denominator of the expression above is equivalent as $\lambda\to\infty$ to $\lambda^{b_v}\sum_{b_v-c_e\leq|\by|\leq b_v}\bn_\orde(\by)>0$ and the numerator to $\ind(x\leq c_e)\lambda^{b_v}\sum_{|\by|=b_v-x_e}\bn_\orde(\by)+\operatorname o(\lambda^{b_v})$. Hence, $$\lim_{\lambda\to\infty}\Qcal_\ore^{(\lambda)}[\bn](x)=\frac{\ind(x\leq c_e)\sum_{|\by|=b_v-x_e}\bn_\orde(\by)}{\sum_{b_v-c_e\leq|\by|\leq b_v}\bn_\orde(\by)}.$$

Suppose now that $|\bbeta_\orde|<b_v-c_e$. The denominator is equivalent to $$\lambda^{c_e+|\bbeta_\orde|}\sum_{|\by|=|\bbeta_\orde|}\bn_\orde(\by)>0$$ and the numerator to $$\ind(x\leq c_e)\lambda^{x+|\bbeta_\orde|}\sum_{|\by|=|\bbeta_\orde|}\bn_\orde(\by)+\operatorname o(\lambda^{c_e+|\bbeta_\orde|}).$$ Thus, $$\lim_{\lambda\to\infty}\Qcal_\ore^{(\lambda)}[\bn](x)=\delta_{c_e}(x).$$
\end{proof}

\subsection{Finite graphs}
\begin{lemma}\label{lemma: adjacent sequences}
$\left(\bm^{2k}\right)_{k\in\N}$ and $\left(\bm^{2k+1}\right)_{k\in\N}$ are adjacent sequences in $\tPcallc^\orE$ for the $\lru$-order. We can then define $\bm^-=\lim\uparrow_{k\to\infty}\bm^{2k}\leq_\lru\bm^+=\lim\downarrow_{k\to\infty}\bm^{2k+1}$ in $\tPcallc^\orE$. They satisfy
\begin{align*}
\bm^+=\Rcal_G^{(\lambda)}[\bm^-]\text{ and }\bm^-=\Rcal_G^{(\lambda)}[\bm^+].
\end{align*}
\end{lemma}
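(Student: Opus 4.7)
The plan is to prove the three assertions of the lemma in order: membership in $\tPcallc^\orE$ of every iterate, the adjacency chain of inequalities, and the limit identities.

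First, I would show by induction on $t$ that every iterate $\bm^t$ lies in $\tPcallc^\orE$. The base case $\bm^0=(\delta_0)_{\ore\in\orE}$ is trivial since $\delta_0$ is log-concave with support $\{0\}$. For the inductive step, Proposition~\ref{prop: monotonicity of local operators} guarantees that $\Rcal_\ore^{(\lambda)}$ sends log-concave inputs to log-concave outputs, and by its defining formula the output support is always a subinterval of $\{0,\dots,c_e\}$ containing $0$, so the iterate remains in $\tPcallc^\orE$.

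Second, for the chain $\bm^{2k}\leq_\lru \bm^{2k+2}\leq_\lru \bm^{2k+3}\leq_\lru \bm^{2k+1}$, I would induct on $k$ using two ingredients: $\Rcal_G^{(\lambda)}$ is non-increasing for $\lru$ by Proposition~\ref{prop: monotonicity of local operators}, and $\delta_0$ is a smallest element of $\tPcal$ for $\lru$. The latter gives both $\bm^0\leq_\lru \bm^1$ and $\bm^0\leq_\lru \bm^2$. Applying $\Rcal_G^{(\lambda)}$ to these reverses them, yielding $\bm^2\leq_\lru \bm^1$ and $\bm^1\geq_\lru\bm^3$, and a further application yields $\bm^2\leq_\lru\bm^3$ and $\bm^2\leq_\lru\bm^4$, completing the base. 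The inductive step is formally identical, applying $\Rcal_G^{(\lambda)}$ twice to each of $\bm^{2k}\leq_\lru \bm^{2k+2}$ and $\bm^{2k+1}\geq_\lru \bm^{2k+3}$.

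Third, I would pass to the limit. Each message lives in a compact simplex (support in $\{0,\dots,c_e\}$), so $L_1$-convergence coincides with pointwise convergence. The $\lru$-order is closed under pointwise limits, being defined by finitely many pointwise product inequalities. A standard sandwich argument combined with antisymmetry of $\leq_\lru$ on $\tPcallc$ (if $p\leq_\lru q\leq_\lru p$ then $p(i+k)q(i)=p(i)q(i+k)$ for all $i,k$, forcing $p\propto q$ hence $p=q$) then implies uniqueness of subsequential limits, so $\bm^{2k}\to\bm^-$ and $\bm^{2k+1}\to\bm^+$ in $L_1$. To ensure $\bm^\pm\in\tPcallc^\orE$, I would show that $0$ remains in each limit support by proving the uniform lower bound $\Rcal_\ore^{(\lambda)}[\bm](0)\geq 1/((c_e+1)\lambda^{c_e})>0$, which follows because the numerator of $\Rcal_\ore^{(\lambda)}[\bm](0)$ equals the $t=0$ term of its denominator and dominates each other term once the $\lambda^{c_e}$ factor is accounted for; this survives passage to the limit, while log-concavity and interval support pass to limits trivially. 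Finally, continuity of $\Rcal_G^{(\lambda)}$ on $\tPcallc^\orE$ (Proposition~\ref{prop: continuity for log-concave inputs and limiting operator}) lets me pass to the limit in $\bm^{t+1}=\Rcal_G^{(\lambda)}[\bm^t]$ along the two subsequences, giving $\bm^+=\Rcal_G^{(\lambda)}[\bm^-]$ and $\bm^-=\Rcal_G^{(\lambda)}[\bm^+]$.

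The main obstacle is verifying that the limits $\bm^\pm$ still have $0$ in the support of every coordinate, i.e.\ belong to $\tPcallc^\orE$ rather than merely $\Pcallc^\orE$. Without this, Proposition~\ref{prop: continuity for log-concave inputs and limiting operator} could not be invoked at the limit and the fixed-point identity could fail. The uniform positive lower bound on $m_\ore^t(0)$ extracted from the explicit formula for $\Rcal_\ore^{(\lambda)}$ is the key technical fact that closes the argument.
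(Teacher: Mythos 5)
Your proof takes essentially the same route as the paper's: the same adjacent-sequence chain obtained by iterating the antitone operator $\Rcal_G^{(\lambda)}$ starting from the minimal element $\delta_0$, and the same invocation of Proposition~\ref{prop: continuity for log-concave inputs and limiting operator} to pass to the limit in the two one-step recursions. The extra care you take in verifying $\bm^\pm\in\tPcallc^\orE$ via a uniform lower bound on $m^t_\ore(0)$ is a welcome detail the paper only tacitly assumes; note however that the correct bound is $1/\sum_{t=0}^{c_e}\lambda^t$, since your $1/((c_e+1)\lambda^{c_e})$ is only a lower bound on that quantity when $\lambda\geq 1$.
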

\begin{proof}
First-of-all, it is clear that $\bm^k\in\tPcallc$ for all $k\in\N$, and for all $\ore$, $\bm_\ore^k\geq_\lru\delta_0$. In particular, $\bm^1\geq_\lru\bm^0$, and thus $\bm^2\leq_\lru\bm^1$ as $\Rcal_G^{(\lambda)}$ is non-increasing; iterating $\Rcal_G^{(\lambda)}$ we get also $\bm^3\geq_\lru\bm^2$. Thus,
\begin{align*}
\bm^0\leq_\lru\bm^2\leq_\lru\bm^3\leq_\lru\bm^1.
\end{align*}

Now, it is easy to see that applying $\Rcal_G^{(\lambda)}\circ\Rcal_G^{(\lambda)}$ to the equation above gives $\bm^2\leq_\lru\bm^4\leq_\lru\bm^5\leq_\lru\bm^3$; keeping iterating $\Rcal_G^{(\lambda)}\circ\Rcal_G^{(\lambda)}$ thus yields two adjacent sequences as claimed. Taking the limit $k\to\infty$ in $\bm^{2k}\leq_\lru\bm^{2k+1}$ gives $\bm^-\leq_\lru\bm^+$, and similarly letting $k\to\infty$ in $\bm^{2k+1}=\Rcal_G^{(\lambda)}[\bm^{2k}]$ and $\bm^{2k+2}=\Rcal_G^{(\lambda)}[\bm^{2k+1}]$ yields $\bm^+=\Rcal_G^{(\lambda)}[\bm^-]$ and $\bm^-=\Rcal_G^{(\lambda)}[\bm^+]$ respectively, due to Proposition~\ref{prop: continuity for log-concave inputs and limiting operator}.
\end{proof}

\begin{proposition*}[Unique fixed point in finite graphs at finite $\lambda$; Proposition~\ref{prop: unique fixed-point in finite graphs}]
In a finite graph $G$, the fixed-point equation $\bm=\Rcal_G^{(\lambda)}[\bm]$ admits a unique solution $\bm^{(\lambda)}\in\tPcallc^\orE$.
\end{proposition*}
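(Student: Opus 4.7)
The plan is to follow the monotone-sandwich strategy already used in the preceding section of the paper. First I initialize $\bm^0$ by setting $m_{\ore}^0=\delta_0$ on every directed edge, so that each initial message is log-concave and is a least element for the $\lru$-order among distributions in $\tPcallc$ supported in $\{0,\ldots,c_e\}$. Since $\Rcal_G^{(\lambda)}$ preserves log-concavity (Proposition~\ref{prop: monotonicity of local operators}) and is $\lru$-non-increasing, I obtain $\bm^0\leq_\lru\bm^2\leq_\lru\cdots$ and $\bm^1\geq_\lru\bm^3\geq_\lru\cdots$, with $\bm^{2t}\leq_\lru\bm^{2t+1}$ for every $t$. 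Hence the even and odd subsequences are adjacent in $\tPcallc^{\orE}$ and converge, by compactness of the set of messages with bounded support, to limits $\bm^-\leq_\lru\bm^+$. Continuity of $\Rcal_G^{(\lambda)}$ on log-concave inputs (Proposition~\ref{prop: continuity for log-concave inputs and limiting operator}) yields $\bm^+=\Rcal_G^{(\lambda)}[\bm^-]$ and $\bm^-=\Rcal_G^{(\lambda)}[\bm^+]$.

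The main task is then to show $\bm^-=\bm^+$; once that is done, $\bm^{(\lambda)}:=\bm^-$ is a fixed point, and uniqueness plus convergence from any initialization follows from the minimality of $\delta_0$: for any $\bm'^0\in\tPcallc^{\orE}$ we have $\delta_0\leq_\lru\bm'^0$, so monotonicity gives $\bm^{2t}\leq_\lru\bm'^{2t}\leq_\lru\bm^{2t+1}$ and $\bm^{2t}\leq_\lru\bm'^{2t+1}\leq_\lru\bm^{2t+1}$ for every $t$, forcing $\bm'^t\to\bm^{(\lambda)}$.

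The key step, and what I expect to be the main obstacle, is establishing $\bm^-=\bm^+$. The idea is to use the strict monotonicity of $\Dcal_v$ for log-concave inputs with $0$ in the support, together with a global conservation identity obtained by summing over all vertices. Concretely, at a fixed point pair $(\bm^-,\bm^+)$ with $\bm^+=\Rcal_G^{(\lambda)}[\bm^-]$, the quantity $\Dcal_v[\bm^-]$ can be rewritten, vertex by vertex, as
\begin{align*}
\Dcal_v[\bm^-]=\sum_{u\sim v}\frac{\sum_{x\in\N}x\,m^-_{\overrightarrow{uv}}(x)\,\Rcal_{\overrightarrow{vu}}[\bm^-](x)}{\sum_{x\in\N}m^-_{\overrightarrow{uv}}(x)\,\Rcal_{\overrightarrow{vu}}[\bm^-](x)},
\end{align*}
so substituting $\Rcal_{\overrightarrow{vu}}[\bm^-]=m^+_{\overrightarrow{vu}}$ and symmetrically for $v$ leads to an expression in which swapping the roles of $\bm^-$ and $\bm^+$ corresponds to reindexing the outer sum $\sum_{v\in V}\sum_{u\sim v}$ as $\sum_{u\in V}\sum_{v\sim u}$. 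The two sums are thus equal, yielding $\sum_{v\in V}\Dcal_v[\bm^-]=\sum_{v\in V}\Dcal_v[\bm^+]$. Combined with the pointwise inequality $\Dcal_v[\bm^-]\leq\Dcal_v[\bm^+]$ that follows from $\bm^-\leq_\lru\bm^+$ and monotonicity of $\Dcal_v$, this forces $\Dcal_v[\bm^-]=\Dcal_v[\bm^+]$ for every $v$. Finally, since all messages involved are log-concave and contain $0$ in their support (as they are images under $\Rcal_G^{(\lambda)}$ starting from $\delta_0$, hence supported on intervals containing $0$), the strict monotonicity clause of Proposition~\ref{prop: monotonicity of local operators} rules out $\bm^-<_\lru\bm^+$ on any incoming edge of any vertex, and we conclude $\bm^-=\bm^+$, which completes the proof.
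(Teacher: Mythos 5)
Your proof follows the same route as the paper's: start from $\delta_0$, use $\lru$-monotonicity and log-concavity preservation to produce adjacent even/odd subsequences converging to $\bm^-\leq_\lru\bm^+$ with $\bm^+=\Rcal_G^{(\lambda)}[\bm^-]$ and $\bm^-=\Rcal_G^{(\lambda)}[\bm^+]$; then establish the global identity $\sum_v\Dcal_v[\bm^-]=\sum_v\Dcal_v[\bm^+]$ by rewriting $\Dcal_v$ in terms of incoming message times outgoing $\Rcal$-image and reindexing the double sum over $(v,u)$; and finally invoke strict monotonicity of $\Dcal_v$ on log-concave inputs containing $0$ in their support to force $\bm^-=\bm^+$, with minimality of $\delta_0$ giving uniqueness and convergence from any initialization. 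One small slip: the sandwiching $\bm^{2t}\leq_\lru\bm'^{2t}\leq_\lru\bm^{2t+1}$ does not hold as stated (for $t=0$ the arbitrary $\bm'^0$ has no upper bound, and for $t\geq 1$ the correct upper bound is $\bm^{2t-1}$, not $\bm^{2t+1}$; indeed $\bm'^{2t}\leq_\lru\bm^{2t-1}$ does not imply $\bm'^{2t}\leq_\lru\bm^{2t+1}$ since the odd sequence is decreasing), but the squeeze still goes through with the corrected index and does not affect the substance of the argument. You also elide the normalization $\lambda^{\N}$-reweighting when writing $\Rcal_{\overrightarrow{vu}}[\bm^-]=m^+_{\overrightarrow{vu}}$ (strictly this is $m^+_{\overrightarrow{vu}}=\Rcal^{(\lambda)}_{\overrightarrow{vu}}[\bm^-]=\lambda^{\N}\centerdot\Rcal_{\overrightarrow{vu}}[\bm^-]$), but since the $\lambda^{\N}$ factor cancels in the ratio once you substitute both $m^-_{\overrightarrow{uv}}$ and $\Rcal_{\overrightarrow{vu}}[\bm^-]$, the identity you need is correct, as in the paper.
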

\begin{proof}
As $\bm^-\leq_\lru\bm^+$ and $\Dcal_v$ is non-decreasing for all $v\in V$, it follows $\Dcal_v[\bm^-]\leq\Dcal_v[\bm^+]$ for all $v\in V$. Then, summing over all vertices of $G$ we get
\begin{align*}
\sum_{v\in V}\Dcal_v[\bm^-]&=\sum_{v\in V}\sum_{u\sim v}\frac{\sum_{x\in\N}xm_{\overrightarrow{uv}}^-(x)\Rcal_{\overrightarrow{vu}}[\bm^-](x)}{\sum_{x\in\N}m_{\overrightarrow{uv}}^-(x)\Rcal_{\overrightarrow{vu}}[\bm^-](x)}\\
&=\sum_{v\in V}\sum_{u\sim v}\frac{\sum_{x\in\N}x\Rcal_{\overrightarrow{uv}}[\bm^+](x)m_{\overrightarrow{vu}}^+(x)}{\sum_{x\in\N}\Rcal_{\overrightarrow{uv}}[\bm^+](x)m_{\overrightarrow{vu}}^+(x)}\\
&=\sum_{u\in V}\sum_{v\sim u}\frac{\sum_{x\in\N}x\Rcal_{\overrightarrow{uv}}[\bm^+](x)m_{\overrightarrow{vu}}^+(x)}{\sum_{x\in\N}\Rcal_{\overrightarrow{uv}}[\bm^+](x)m_{\overrightarrow{vu}}^+(x)}
=\sum_{u\in V}\Dcal_u[\bm^+]
\end{align*}

Hence, in fact, $\Dcal_v[\bm^-]=\Dcal_v[\bm^+]$ for all $v\in V$. As $\Dcal_v$ is strictly increasing for these inputs, it follows $\bm^-=\bm^+=\bm^{(\lambda)}$.

Finally, if $\bm\in\Pcal^\orE$ is another solution of $\bm=\Rcal_G^{(\lambda)}[\bm]$. Then $\bm\geq_\lru\bm^0$, and thus by iterating $\Rcal_G^{(\lambda)}$ we obtain $\bm^-\leq_\lru\bm\leq_\lru\bm^+$. Thus, $\bm=\bm^{(\lambda)}$.
\end{proof}

We do not include a proof of the following proposition, as it is not part of the development towards the main theorems, and is only mentionned here as a related result having interesting algorithmic consequences.
\begin{proposition*}[Correctness for finite bipartite graphs; Proposition~\ref{prop: Chertkov}]
In finite bipartite graphs,
\begin{align*}
\frac{1}{2}\lim\uparrow_{\lambda\to\infty}\sum_{v\in V}\Dcal_v[\bm^{(\lambda)}]=M(G).
\end{align*}
\end{proposition*}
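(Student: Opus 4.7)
The plan is to adapt Chertkov's argument for exactness of BP on bipartite weighted matching to the present setting with arbitrary integer vertex and edge capacities. The key ingredients will be (i) concentration of $\mu_G^\lambda$ on maximum allocations as $\lambda\to\infty$, (ii) exactness of BP on finite trees (stated earlier), combined with a computation-tree unrolling, and (iii) the bipartite structure of $G$, which through the evenness of all cycles / total unimodularity of the incidence matrix will force the BP estimate to agree with the true marginal in the large-$\lambda$ limit.

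First I would establish the elementary double-counting identity
$$
\sum_{v\in V}\mu_G^\lambda\Bigl(\sum_{e\in\d v}X_e\Bigr)=2\,\mu_G^\lambda(|\bX|),
$$
and note that $\mu_G^\lambda(|\bX|)\uparrow M(G)$ as $\lambda\to\infty$ because $\mu_G^\lambda$ concentrates on the uniform measure over maximum allocations. The theorem then reduces to showing
$$
\lim_{\lambda\to\infty}\sum_{v\in V}\Bigl(\Dcal_v[\bm^{(\lambda)}]-\mu_G^\lambda\bigl({\textstyle\sum_{e\in\d v}}X_e\bigr)\Bigr)=0.
$$

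Next I would introduce, for each root $v\in V$ and each depth $t\geq0$, the computation tree $T_v^{(t)}$ obtained by unrolling the BP recursion from $v$ to depth $t$ in $G$, inheriting all vertex and edge capacities from $G$. Exactness of BP on trees gives $\Dcal_v\bigl[\bm^{(\lambda,t)}\bigr]=\mu_{T_v^{(t)}}^\lambda\bigl(\sum_{e\in\d v}X_e\bigr)$, where $\bm^{(\lambda,t)}$ denotes the BP messages after $t$ synchronous updates starting from $\delta_0$. Using Proposition~\ref{prop: unique fixed-point in finite graphs} and continuity of $\Dcal_v$ on $\tPcallc$ (Proposition~\ref{prop: continuity for log-concave inputs and limiting operator}), I would pass to the limit $t\to\infty$ to express $\Dcal_v[\bm^{(\lambda)}]$ as a limit of root marginals of Gibbs measures on $T_v^{(t)}$.

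The bipartite assumption enters in comparing these computation-tree marginals to the graph marginals in the limit $\lambda\to\infty$. The idea, following Chertkov, is the following: given an allocation $\bx$ on $T_v^{(t)}$, project it to a fractional configuration on $G$ by averaging over the copies of each edge; conversely, given an allocation $\bx^\star$ on $G$, lift it to $T_v^{(t)}$. Because $G$ is bipartite, every cycle in $G$ has even length, so the symmetric difference of the lift of $\bx^\star$ and any allocation on $T_v^{(t)}$ decomposes into alternating paths and even alternating circuits that can be flipped to transport weight without violating any capacity. Making this transport quantitative (using the log-concavity of the fixed-point messages guaranteed by Proposition~\ref{prop: monotonicity of local operators}) shows that the root occupancy on $T_v^{(t)}$ differs from the corresponding quantity on $G$ by a term whose contribution, summed over $v$, vanishes as $\lambda\to\infty$.

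The main obstacle will be extending Chertkov's augmenting-path argument from unit edge capacities to arbitrary integer capacities $c_e$. In the unit-capacity case the symmetric difference of two allocations is literally a union of alternating paths and cycles; for general $c_e$ one must instead work in the multigraph obtained by replicating each edge $c_e$ times, where the analogous decomposition exists but requires a more careful Euler-tour-style argument to keep track of vertex capacities $b_v$. Once this combinatorial step is in place, log-concavity of $\bm^{(\lambda)}$ supplies the quantitative $\lambda^{-1}$ bound needed to conclude that the discrepancy vanishes in the limit, completing the proof.
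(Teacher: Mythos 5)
The paper does not actually prove this proposition: the appendix explicitly says ``We do not include a proof of the following proposition, as it is not part of the development towards the main theorems,'' and merely attributes the result to an adaptation of \cite{Chertkov08}. So there is no proof in the paper against which to compare your proposal line by line; I will instead assess the proposal on its own merits.

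Your plan --- computation-tree unrolling, exactness on trees, and a bipartite augmenting-path comparison between the computation tree and $G$ --- is indeed the shape of Chertkov's original argument, and the reduction via the double-counting identity and $\mu_G^\lambda(|\bX|)\uparrow M(G)$ is correct (the monotonicity follows since $\partial_{\log\lambda}\,\mu_G^\lambda(|\bX|)$ is a variance). But as written the proposal has two genuine gaps. First, you say ``Exactness of BP on trees gives $\Dcal_v[\bm^{(\lambda,t)}]=\mu_{T_v^{(t)}}^\lambda(\sum_{e\in\d v}X_e)$'' where $\bm^{(\lambda,t)}$ starts from $\delta_0$; this identity only holds if the leaves of $T_v^{(t)}$ carry boundary conditions that reproduce $\delta_0$ as the incoming message. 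A free leaf of capacity $b_w$ produces $\Rcal_\ore$ of an empty product, which is proportional to $\lambda^x\ind(x\le \min(b_w,c_e))$ and is not $\delta_0$ in general. You need to either attach artificial zero-capacity leaves or control the boundary discrepancy through the adjacent sandwiching $\bm^{2t}\le_\lru\bm^{(\lambda)}\le_\lru\bm^{2t+1}$ established in the proof of Proposition~\ref{prop: unique fixed-point in finite graphs}. Second --- and you flag this yourself --- the combinatorial heart of the argument, decomposing the discrepancy between a lifted allocation on $T_v^{(t)}$ and an arbitrary allocation there into flippable alternating structures that respect \emph{both} $c_e$ and $b_v$ constraints, is only asserted for general integer capacities. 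Saying ``an Euler-tour-style argument'' and ``log-concavity supplies the quantitative $\lambda^{-1}$ bound'' names the hoped-for ingredients but does not supply the lemma; until that decomposition is actually proved the argument does not close.

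A route more in line with the machinery the paper actually develops would be to invoke Proposition~\ref{prop: BP estimate in finite graphs}, which already gives
$\lim\uparrow_{\lambda\to\infty}\sum_v\Dcal_v[\bm^{(\lambda)}]=\inf_{\balpha=\Scal_G\circ\Scal_G(\balpha)}\sum_v F_v(\balpha)$
with $F_v(\balpha)=\min(b_v,|\balpha_\ordv|)+(b_v-|\balpha_\oldv|)^+$, and then prove directly that this infimum equals $2M(G)$ on bipartite $G$. One shows $\sum_v F_v(\balpha)\ge 2M(G)$ for every fixed point $\balpha$ (an LP-weak-duality-style bound using the $\Scal$-fixed-point relations), and exhibits, for bipartite $G$, a fixed point achieving equality (e.g.\ built from a maximum allocation by orienting slack along the bipartition). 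This avoids computation trees and the $\lambda\to\infty$ quantitative bound entirely, and confines the bipartite structure to one clean combinatorial construction; it is the argument I would encourage you to carry out rather than reproducing Chertkov's temperature-indexed comparison for general capacities.
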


\begin{proposition*}[Limit of $\lambda\to\infty$ (Proposition~\ref{prop: limit of $0$ temperature})]
$\bm^{(\lambda)}$ is non-decreasing in $\lambda$, and $\bm^{(\infty)}=\lim\uparrow_{\lambda\to\infty}\bm^{(\lambda)}\in\Pcallc^\orE$ is the minimal solution (for the $\lru$-order) of $\bm^{(\infty)}=\Qcal_G\circ\Rcal_G[\bm^{(\infty)}]$.
\end{proposition*}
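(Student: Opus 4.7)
The cornerstone is the identity $(\Rcal_G^{(\lambda)})^2 = \Qcal_G^{(\lambda)}\circ\Rcal_G$, which follows directly from unwinding $\Rcal_\ore^{(\lambda)}[\bm]=\lambda^\N\centerdot\Rcal_\ore[\bm]$ and $\Qcal_\ore^{(\lambda)}[\bn]=\Rcal_\ore^{(\lambda)}[\lambda^\N\centerdot\bn]$. Writing $T_\lambda:=\Qcal_G^{(\lambda)}\circ\Rcal_G$, Proposition~\ref{prop: unique fixed-point in finite graphs} thus yields $\bm^{(\lambda)}=T_\lambda[\bm^{(\lambda)}]$, and the even iterates $(\bm^{2k})$ constructed in its proof are exactly $T_\lambda^k[\delta_0]$, so $T_\lambda^k[\delta_0]$ increases in the $\lru$-order to $\bm^{(\lambda)}$. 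Moreover $T_\lambda$ is non-decreasing in its input (a composition of two $\lru$-non-increasing operators, using Proposition~\ref{prop: monotonicity of local operators} for $\Rcal_G$ and Lemma~\ref{lemma: reweighting and shifted reversal with lru-order} for the reweighting $\bn\mapsto\lambda^\N\centerdot\bn$ built into $\Qcal_G^{(\lambda)}$) and non-decreasing in $\lambda$ at fixed input by Proposition~\ref{prop: monotonicity in lambda}.

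\textbf{Monotonicity in $\lambda$ and existence of $\bm^{(\infty)}$.} For $\lambda_1\leq\lambda_2$, a straightforward induction on $k$ combining the two monotonicities of $T_\lambda$ gives $T_{\lambda_1}^k[\delta_0]\leq_\lru T_{\lambda_2}^k[\delta_0]$; sending $k\to\infty$ yields $\bm^{(\lambda_1)}\leq_\lru\bm^{(\lambda_2)}$. Messages live in the compact simplex of distributions on $\{0,\ldots,c_e\}$, so the $\lru$-increasing family admits a supremum $\bm^{(\infty)}$. Log-concavity is preserved under pointwise limits of $p(i)p(i+2)\leq p(i+1)^2$, hence $\bm^{(\infty)}\in\Pcallc^\orE$.

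\textbf{Fixed-point equation at infinity.} To pass to the limit in $\bm^{(\lambda)}=\Qcal_G^{(\lambda)}[\Rcal_G[\bm^{(\lambda)}]]$ I plan to use, for $\lambda_0\leq\lambda$, the sandwich
\[
\Qcal_G^{(\lambda_0)}\!\left[\Rcal_G[\bm^{(\lambda)}]\right]\leq_\lru\bm^{(\lambda)}\leq_\lru\Qcal_G^{(\lambda)}\!\left[\Rcal_G[\bm^{(\infty)}]\right],
\]
where the left inequality is the $\lambda$-monotonicity of $\Qcal_G^{(\lambda)}$ (Proposition~\ref{prop: monotonicity in lambda}), and the right follows from $\Rcal_G[\bm^{(\lambda)}]\geq_\lru\Rcal_G[\bm^{(\infty)}]$ combined with non-increasingness of $\Qcal_G^{(\lambda)}$ in its input. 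Sending $\lambda\to\infty$ with $\lambda_0$ fixed, continuity of $\Rcal_G$ on log-concave inputs and of $\Qcal_G^{(\lambda_0)}=\lambda_0^\N\centerdot\Rcal_G^{(\lambda_0)}[\lambda_0^\N\centerdot\,\cdot\,]$ (reducing to continuity of $\Rcal_G$ by Proposition~\ref{prop: continuity for log-concave inputs and limiting operator}) together with $\Qcal_G^{(\lambda)}[\Rcal_G[\bm^{(\infty)}]]\uparrow\Qcal_G[\Rcal_G[\bm^{(\infty)}]]$ from the same proposition give $\Qcal_G^{(\lambda_0)}[\Rcal_G[\bm^{(\infty)}]]\leq_\lru\bm^{(\infty)}\leq_\lru\Qcal_G[\Rcal_G[\bm^{(\infty)}]]$; finally $\lambda_0\to\infty$ pinches the left-hand side up to $\Qcal_G[\Rcal_G[\bm^{(\infty)}]]$, establishing $\bm^{(\infty)}=\Qcal_G\circ\Rcal_G[\bm^{(\infty)}]$.

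\textbf{Minimality, and the main obstacle.} For any other fixed point $\bm'$ of $\Qcal_G\circ\Rcal_G$, the pointwise bound $\Qcal_G^{(\lambda)}\leq_\lru\Qcal_G$ yields $T_\lambda[\bm']\leq_\lru\Qcal_G\circ\Rcal_G[\bm']=\bm'$; iterating using monotonicity of $T_\lambda$ gives $T_\lambda^k[\delta_0]\leq_\lru T_\lambda^k[\bm']\leq_\lru\bm'$, hence $\bm^{(\lambda)}\leq_\lru\bm'$, and taking $\lambda\to\infty$ concludes $\bm^{(\infty)}\leq_\lru\bm'$. The only non-routine step is the double limit in the previous paragraph: everything else is monotone manipulation, but interchanging $\lambda_0\to\infty$ and $\lambda\to\infty$ is precisely what forces the careful separation, in the earlier propositions, between the monotonicity of $\Qcal_G^{(\lambda)}$ in $\lambda$ (Proposition~\ref{prop: monotonicity in lambda}) and the continuity of the fixed-$\lambda$ operators at log-concave inputs (Proposition~\ref{prop: continuity for log-concave inputs and limiting operator}).
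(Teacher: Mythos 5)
Your proof is correct and follows the paper's strategy closely: monotonicity of $\bm^{(\lambda)}$ in $\lambda$ by induction on the BP iterates (the paper tracks $\bm^k$ and $\lambda^{-\N}\centerdot\bm^k$ jointly, you equivalently track $T_\lambda^k[\delta_0]=\bm^{2k}$ using the identity $(\Rcal_G^{(\lambda)})^2=\Qcal_G^{(\lambda)}\circ\Rcal_G$), passage to the limit in the fixed-point equation, and minimality by sandwiching $(\Qcal_G^{(\lambda)}\circ\Rcal_G)^k$-iterates between $\delta_0$ and a generic fixed point. One genuine merit of your write-up is that you make explicit, via the $\lambda_0\le\lambda$ sandwich, the double limit $\Qcal_G^{(\lambda)}[\Rcal_G[\bm^{(\lambda)}]]\to\Qcal_G[\Rcal_G[\bm^{(\infty)}]]$ (simultaneous limit in the operator index and its argument), a step the paper compresses into a single ``passing to the limit'' sentence.
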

\begin{proof}
We first show that $\lambda\mapsto\bm^{(\lambda)}$ and $\lambda\mapsto\lambda^{-\N}\centerdot\bm^{(\lambda)}$ are respectively non-decreasing and non-increasing, where $\lambda^{-\N}(x)=\lambda^{-x}$ for $x\in\N$. We proceed by induction: $m_\ore^0=\delta_0$ for all $\ore\in\orE$, hence $\lambda\mapsto\bm^0$ and $\lambda\mapsto\lambda^{-\N}\centerdot\bm^0$ are constant functions. Suppose now that $\lambda\mapsto\bm^k$ and $\lambda\mapsto\lambda^{-\N}\centerdot\bm^k$ are respectively non-decreasing and non-increasing, for some $k\in\N$. $\lambda^{-\N}\centerdot\bm^{k+1}=\Rcal_G[\bm^k]$. As $\lambda\mapsto\bm^k$ is non-increasing and $\Rcal_G$ is non-increasing, it follows $\lambda\mapsto\lambda^{-\N}\centerdot\bm^{k+1}$ is non-increasing. Similarly, let $\lambda'\geq\lambda$ and call $\bm'$ the messages associated with $\lambda'$. We have $\bm^{k+1}=\Qcal_G^{(\lambda)}[\lambda^{-\N}\centerdot\bm^k]\leq_\lru\Qcal_G^{(\lambda)}[{\lambda'}^{-\N}\centerdot\bm'^k]$ because $\bm^k$ is non-increasing in $\lambda$ and $\Qcal_G^{(\lambda)}$ is non-decreasing. Also, $\Qcal^{(\lambda)}$ is non-decreasing in $\lambda$, so $\Qcal_G^{(\lambda)}[{\lambda'}^{-\N}\centerdot\bm'^k]\leq_\lru\Qcal_G^{(\lambda')}[{\lambda'}^{-\N}\centerdot\bm'^k]=\bm'^{k+1}$.

Taking the limit $k\to\infty$, we obtain that $\lambda\mapsto\bm^{(\lambda)}$ and $\lambda\mapsto\lambda^{-\N}\centerdot\bm^{(\lambda)}$ are respectively non-decreasing and non-increasing. This allows us to define $\bm^{(\infty)}=\lim\uparrow_{\lambda\to\infty}\bm^{(\lambda)}\in\Pcallc^\orE$ and $\bn^{(\infty)}=\lim\downarrow_{\lambda\to\infty}\lambda^{-\N}\centerdot\bm^{(\lambda)}\in\tPcallc^\orE$. Passing to the limit $\lambda\to\infty$ in $\lambda^{-\N}\centerdot\bm^{(\lambda)}=\Rcal_G[\bm^{(\lambda)}]$ and in $\bm^{(\lambda)}=\Qcal_G^{(\lambda)}[\lambda^{-\N}\centerdot\bm^{(\lambda)}]$, we obtain
\begin{eqnarray}\label{eqn: two-step fixed-point}
\bn^{(\infty)}=\Rcal_G[\bm^{(\infty)}]&\text{ and }&\bm^{(\infty)}=\Qcal_G[\bn^{(\infty)}].
\end{eqnarray}

Let $\bm\in\Pcal^\orE$ be another solution of the two-step equation given by~(\ref{eqn: two-step fixed-point}). We have $m_\ore\geq_\lru\delta_0$, hence $\bm^{2k}\leq_\lru\left(\Qcal_G^{(\lambda)}\circ\Rcal_G\right)^k[\bm]\leq_\lru\bm$, where the first inequality is obtained by applying the non-decreasing operator $\Qcal_G^{(\lambda)}\circ\Rcal_G$ $k$ times and the second one follows from the fact $\Qcal_G^{(\lambda)}\circ\Rcal_G[\bm]$ is non-decreasing in $\lambda$, thus $\left(\Qcal_G^{(\lambda)}\circ\Rcal\right)^k[\bm]\leq_\lru\left(\Qcal_G\circ\Rcal\right)^k[\bm]=\bm$. Taking the limit $k\to\infty$ yields $\bm^{(\lambda)}\leq_\lru\bm$, and then $\lambda\to\infty$ gives $\bm^{(\infty)}\leq_\lru\bm$.
\end{proof}

The case of infinite unimodular graphs being more general, the proof of the following result is included only in this context (see Proposition~\ref{prop: BP estimate in unimodular random graphs}).
\begin{proposition*}[BP estimate in finite graphs; Proposition~\ref{prop: BP estimate in finite graphs}]
In a finite graph $G$, we have
\begin{align*}
\lim\uparrow_{\lambda\to\infty}\sum_{v\in V}\Dcal_v[\bm^{(\lambda)}]=\sum_{v\in V}\Dcal_v[\bm^{(\infty)}]=\sum_{v\in V}F_v(\balpha^{(\infty)})=\inf_{\balpha=\Scal_G\circ\Scal_G(\balpha)} \sum_{v\in V}F_v(\balpha),
\end{align*}
where $F_v(\balpha)=\min(b_v,|\balpha_\ordv|)+(b_v-|\balpha_\oldv|)^+$.
\end{proposition*}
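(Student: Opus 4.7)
The plan is to establish the four-term chain of equalities in order, one step per equality. For the first equality, the argument is essentially soft: by the preceding proposition on the limit of $\lambda\to\infty$, $\bm^{(\lambda)}$ is $\lru$-nondecreasing in $\lambda$ and $\bm^{(\lambda)}\uparrow\bm^{(\infty)}\in\Pcallc^\orE$; combined with the $\lru$-monotonicity of $\Dcal_v$ (Proposition on monotonicity of local operators) and its continuity on log-concave inputs (Proposition on continuity for log-concave inputs), this forces $\Dcal_v[\bm^{(\lambda)}]$ to be nondecreasing in $\lambda$ and to converge to $\Dcal_v[\bm^{(\infty)}]$ for each $v\in V$. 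Summing over the finite vertex set then yields the first equality.

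For the second equality I would compute $\Dcal_v[\bm^{(\infty)}]$ in closed form. Introduce $\bn^{(\infty)}:=\Rcal_G[\bm^{(\infty)}]$, so that $\bm^{(\infty)}=\Qcal_G[\bn^{(\infty)}]$. By the support descriptions that accompany the definitions of $\Rcal_\ore$ and $\Qcal_\ore$, each $\bm^{(\infty)}_\ore$ is log-concave on $\{\alpha^{(\infty)}_\ore,\ldots,c_e\}$ and each $\bn^{(\infty)}_\ore$ is log-concave on $\{0,\ldots,\Scal_\ore(\balpha^{(\infty)})\}$, with $\balpha^{(\infty)}$ a fixed point of $\Scal_G\circ\Scal_G$. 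I plug the explicit $\Qcal$-formula for $\bm^{(\infty)}_\ordv$ into the ratio defining $\Dcal_v$; after cancellation between numerator and denominator, the mean reduces to a combination of boundary sums. A case analysis on whether $|\balpha^{(\infty)}_\ordv|\le b_v$ and whether $|\balpha^{(\infty)}_\oldv|\le b_v$ then simplifies the resulting expression to $\min(b_v,|\balpha^{(\infty)}_\ordv|)+(b_v-|\balpha^{(\infty)}_\oldv|)^+=F_v(\balpha^{(\infty)})$.

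For the third equality, the $\lru$-minimality of $\bm^{(\infty)}$ (preceding proposition on the $\lambda\to\infty$ limit) transfers to componentwise minimality of $\balpha^{(\infty)}$ among solutions of $\balpha=\Scal_G\circ\Scal_G(\balpha)$. It remains to show that $\sum_v F_v$ attains its infimum on the fixed-point set at this componentwise minimum. I plan to do this by a telescoping/exchange argument: moving from $\balpha^{(\infty)}$ up to any other fixed-point $\balpha'\ge\balpha^{(\infty)}$, the nondecreasing contribution $\sum_v\min(b_v,|\balpha_\ordv|)$ and the nonincreasing contribution $\sum_v(b_v-|\balpha_\oldv|)^+$ are tightly linked through the identities $\alpha_\ore=[b_v-|\Scal(\balpha)_\orde|]_0^{c_e}$ forced on each component by the fixed-point relation; reindexing the two sums by directed edges and comparing term by term shows that the net change is non-negative.

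The main obstacle is Step~2, the closed-form computation. Unlike the scalar-message setting of previous works, the $\lambda\to\infty$ limit does not collapse $\bm^{(\infty)}_\ore$ to a Dirac mass, and the marginal of interest at $v$ must be expanded using the piecewise extended definitions~(\ref{eqn: def Rcal}),(\ref{eqn: def Dcal}),(\ref{eqn: def Qcal}), which branch depending on whether the incoming infima saturate $b_v$. Identifying the resulting closed form with $F_v$ forces one to separate saturated from unsaturated vertices in both the $\ordv$ and $\oldv$ directions and to reconcile the two contributions via the $\Scal\circ\Scal$ fixed-point relation. Step~3 is comparatively softer but still nontrivial since $F_v$ is non-monotone in $\balpha$ off the fixed-point set, so the exchange argument must exploit the fixed-point constraint essentially.
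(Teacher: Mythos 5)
Your Step~1 (continuity of $\Dcal_v$ on $\tPcallc$-inputs plus monotone convergence of $\bm^{(\lambda)}\uparrow\bm^{(\infty)}$) is exactly what the paper does, and is fine.

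Step~2, however, contains a genuine gap: the equality $\Dcal_v[\bm^{(\infty)}]=F_v(\balpha^{(\infty)})$ does \emph{not} hold vertexwise, only after summing over $V$. Notice that $\Dcal_v$ is a functional of the messages $\bm_{\ordv}$ only, i.e.\ of the incoming distributions, whereas $F_v$ involves \emph{both} $|\balpha_\ordv|$ and $|\balpha_\oldv|$. No amount of case analysis on the extended definitions~(\ref{eqn: def Rcal})--(\ref{eqn: def Qcal}) can produce the term $(b_v-|\balpha_\oldv|)^+$ at a single vertex, since $\Dcal_v$ never sees the outgoing messages. A concrete counterexample: take the path $u-v-w$ with $b_u=b_v=b_w=1$ and all $c_e=1$. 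Then $\bm^{(\infty)}_{\overrightarrow{uv}}=\bm^{(\infty)}_{\overrightarrow{wv}}=\delta_1$ while $\bm^{(\infty)}_{\overrightarrow{vu}}=\bm^{(\infty)}_{\overrightarrow{vw}}$ is uniform on $\{0,1\}$, giving $\alpha^{(\infty)}_{\overrightarrow{uv}}=\alpha^{(\infty)}_{\overrightarrow{wv}}=1$ and $\alpha^{(\infty)}_{\overrightarrow{vu}}=\alpha^{(\infty)}_{\overrightarrow{vw}}=0$. One computes $\Dcal_u[\bm^{(\infty)}]=\Dcal_w[\bm^{(\infty)}]=\tfrac12$ and $\Dcal_v[\bm^{(\infty)}]=1$, whereas $F_u(\balpha^{(\infty)})=F_w(\balpha^{(\infty)})=0$ and $F_v(\balpha^{(\infty)})=2$. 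The per-vertex values disagree everywhere; only the sums agree (both equal $2$). The identity you are after is $\sum_v\Dcal_v=\sum_v F_v$, and it is obtained by first decomposing $\Dcal_v[\bm]=\min(b_v,|\balpha_\ordv|)+\ind(|\balpha_\ordv|<b_v)\sum_{u\sim v}\frac{\sum_x(x-\alpha(m_{\overrightarrow{uv}}))m_{\overrightarrow{uv}}(x)\Rcal_{\overrightarrow{vu}}[\bm](x)}{\sum_x m_{\overrightarrow{uv}}(x)\Rcal_{\overrightarrow{vu}}[\bm](x)}$ and then reindexing the double sum over directed edges (the finite-graph analogue of the Mass Transport Principle used in Lemma~\ref{lemma: computing degree}), which transfers the residual term from one endpoint to the other before it can be identified with $(b_v-|\balpha_\oldv|)^+$ via Lemma~\ref{lemma: computation trick}. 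Without this reindexing step there is no route to $F_v$.

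For Step~3 you also diverge from the paper's route: the paper starts from $\delta_{\alpha_\ore}$ for an arbitrary fixed point $\balpha$, iterates $\Qcal_G\circ\Rcal_G$ to a limiting fixed-point $\bm$ with $\balpha(\bm)\geq\balpha$, and compares $\int\Dcal_r[\bm]$ with $\int\Dcal_r[\bm^{(\infty)}]$ using the same Lemma~\ref{lemma: computing degree} in both inequality forms (the lemma is stated as a pair of inequalities precisely for this reason). Your proposed direct exchange argument on $\sum_v F_v$ is not a priori doomed, but it is unsupported as stated: $F_v$ is non-monotone in $\balpha$ (the two summands move in opposite directions), and you would still need the same global reindexing-by-directed-edges bookkeeping to make the telescoping work. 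In any case, since Step~2 fails, the proof does not go through as proposed.
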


\subsection{Infinite unimodular graphs}
\begin{proposition*}[Proposition~\ref{prop: unique fixed-point in unimodular graphs}]
Let $\rho\in\Ucal$ with $\overline b(\rho)<\infty$. Then, the fixed point equation $\bm=\Rcal^{(\lambda)}[\bm]$ admits a unique solution $\balpha^{(\lambda)}$ for any $\lambda\in\R^+$ for $\rho$-almost every marked graph $G$.
\end{proposition*}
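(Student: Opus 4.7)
The plan is to mirror the finite-graph proof of Proposition~\ref{prop: unique fixed-point in finite graphs}, replacing summation over vertices by integration against $\rho$ via the Mass-Transport Principle. First I initialize synchronous BP at $\bm^0=(\delta_0)_{\ore\in\orE}\in\tPcallc^\orE$. Since $\delta_0$ is minimal for $\lru$, and $\Rcal_G^{(\lambda)}$ is non-increasing, preserves log-concavity, and is continuous on log-concave inputs (Propositions~\ref{prop: monotonicity of local operators} and \ref{prop: continuity for log-concave inputs and limiting operator}), the adjacency argument from the finite case produces monotone pointwise limits $\bm^{(\lambda,-)}:=\lim\uparrow_t\bm^{2t}$ and $\bm^{(\lambda,+)}:=\lim\downarrow_t\bm^{2t+1}$ in $\tPcallc^\orE$, satisfying $\bm^{(\lambda,-)}\leq_\lru\bm^{(\lambda,+)}$ and the two-cycle relations $\bm^{(\lambda,\pm)}=\Rcal_G^{(\lambda)}[\bm^{(\lambda,\mp)}]$. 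Measurability of $\bm^{(\lambda,\pm)}$ on $(\Gcal_*,\rho)$ follows since they are monotone limits of measurable iterates, so the MTP will apply.

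The heart of the proof is the identity
\[
\int \Dcal_r[\bm^{(\lambda,-)}]\,d\rho([G,r]) = \int \Dcal_r[\bm^{(\lambda,+)}]\,d\rho([G,r]).
\]
To establish it, I expand $\Dcal_r[\bm^{(\lambda,-)}]=\sum_{v\sim r}f(G,r,v)$ with
\[
f(G,r,v)=\frac{\sum_x x\,m^{(\lambda,-)}_{\overrightarrow{vr}}(x)\,\Rcal_{\overrightarrow{rv}}[\bm^{(\lambda,-)}](x)}{\sum_x m^{(\lambda,-)}_{\overrightarrow{vr}}(x)\,\Rcal_{\overrightarrow{rv}}[\bm^{(\lambda,-)}](x)},
\]
i.e.\ the expected occupancy of edge $\{r,v\}$ computed from $r$'s viewpoint. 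Applying MTP turns this into $\int\sum_{v\sim r}f(G,v,r)\,d\rho$, and the two-cycle relations $m^{(\lambda,\pm)}_\ore\propto\lambda^{\cdot}\,\Rcal_\ore[\bm^{(\lambda,\mp)}]$ (a direct consequence of $\Rcal^{(\lambda)}=\lambda^{\N}\centerdot\Rcal$) yield the proportionality $m^{(\lambda,-)}_{\overrightarrow{rv}}(x)\,\Rcal_{\overrightarrow{vr}}[\bm^{(\lambda,-)}](x)\propto m^{(\lambda,+)}_{\overrightarrow{vr}}(x)\,\Rcal_{\overrightarrow{rv}}[\bm^{(\lambda,+)}](x)$; the $\lambda^x$ factor and the normalising constants cancel inside the ratio defining $f$. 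Hence $f(G,v,r)$ equals the edge-$\{r,v\}$ contribution to $\Dcal_r[\bm^{(\lambda,+)}]$, and summing over $v\sim r$ recovers $\Dcal_r[\bm^{(\lambda,+)}]$. Both integrals are finite because $\Dcal_r[\cdot]\leq b_r$ and $\overline b(\rho)<\infty$.

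Combining this equality of integrals with the pointwise inequality $\Dcal_r[\bm^{(\lambda,-)}]\leq\Dcal_r[\bm^{(\lambda,+)}]$ coming from monotonicity of $\Dcal_r$ for $\lru$, I obtain $\Dcal_r[\bm^{(\lambda,-)}]=\Dcal_r[\bm^{(\lambda,+)}]$ for $\rho$-a.e.\ rooted graph. Since $\bm^{(\lambda,\pm)}_{\overrightarrow{\partial r}}\in\tPcallc^{\overrightarrow{\partial r}}$ and each coordinate has $0$ in its support (both being limits of iterates of $\delta_0$), the strict monotonicity clause of Proposition~\ref{prop: monotonicity of local operators} forces $\bm^{(\lambda,-)}_{\overrightarrow{\partial r}}=\bm^{(\lambda,+)}_{\overrightarrow{\partial r}}$ $\rho$-a.s. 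Lemma~2.3 of \cite{Aldous07} propagates this from the root to every vertex of $\rho$-a.e.\ marked graph, yielding $\bm^{(\lambda,-)}=\bm^{(\lambda,+)}=:\bm^{(\lambda)}$. Uniqueness among all solutions in $\Pcal^\orE$ follows as in the finite case: any other fixed point $\bm'$ satisfies $\bm^0\leq_\lru\bm'$, so alternating applications of the non-increasing operator $\Rcal_G^{(\lambda)}$ sandwich $\bm'$ between $\bm^{(\lambda,-)}$ and $\bm^{(\lambda,+)}$, forcing $\bm'=\bm^{(\lambda)}$.

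The main obstacle is the careful bookkeeping behind the MTP step: one must verify that, after using the two-cycle identities, the function $f(G,v,r)$ obtained by swapping $r$ and $v$ genuinely equals the $\bm^{(\lambda,+)}$-version of the edge contribution, which relies on a cancellation of $\lambda^x$-weights and normalising constants inside the ratio. Beyond this algebraic check, the only subtleties are the measurability of $\bm^{(\lambda,\pm)}$ (immediate from monotone limits) and the integrability needed to pass from equality of integrals plus pointwise inequality to $\rho$-almost sure equality, which is handled by the assumption $\overline b(\rho)<\infty$.
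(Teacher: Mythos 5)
Your proof is correct and follows essentially the same strategy as the paper: the same initialization at $\delta_0$, the same construction of the adjacent limits $\bm^{(\lambda,\pm)}$ satisfying the two-cycle relations, the same Mass-Transport argument (with the same choice of transport function $f$, up to whether one applies the cycle identity before or after MTP), the same invocation of strict monotonicity of $\Dcal_r$ on $\tPcallc$, and the same appeal to Lemma~2.3 of Aldous--Lyons to propagate from the root to all vertices. The only additions you make beyond the paper are making the algebraic $\lambda^x$-cancellation explicit and the closing sandwiching argument for full uniqueness among arbitrary fixed points, both of which are correct.
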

\begin{proof}
The proof differs from that in the finite graph case in that we cannot sum $\Dcal_v$ over all the vertices of $v$ anymore. Instead, we use the MTP for $f(G,r,v)=\frac{\sum_{x\in\N}xm_{\overrightarrow{vr}}^-(x)\Rcal_{\overrightarrow{rv}}[\bm^-](x)}{\sum_{x\in\N}m_{\overrightarrow{vr}}^-(x)\Rcal_{\overrightarrow{rv}}[\bm^-](x)}$:
\begin{align*}
\int\Dcal_r[\bm^-]d\rho([G,r])&=\int\sum_{v\sim r}\frac{\sum_{x\in\N}xm_{\overrightarrow{vr}}^-(x)\Rcal_{\overrightarrow{rv}}[\bm^-](x)}{\sum_{x\in\N}m_{\overrightarrow{vr}}^-(x)\Rcal_{\overrightarrow{rv}}[\bm^-](x)}d\rho([G,r])\\
&=\int\sum_{v\sim r}\frac{\sum_{x\in\N}x\Rcal_{\overrightarrow{vr}}[\bm^+](x)m_{\overrightarrow{rv}}^+(x)}{\sum_{x\in\N}\Rcal_{\overrightarrow{vr}}[\bm^-](x)m_{\overrightarrow{rv}}^-(x)}d\rho([G,r])\\
&=\int\sum_{v\sim r}\frac{\sum_{x\in\N}x\Rcal_{\overrightarrow{rv}}[\bm^+](x)m_{\overrightarrow{vr}}^+(x)}{\sum_{x\in\N}\Rcal_{\overrightarrow{rv}}[\bm^-](x)m_{\overrightarrow{vr}}^-(x)}d\rho([G,r])\\
&=\int\Dcal_r[\bm^+]d\rho([G,r])
\end{align*}

Because $\overline b(\rho)<\infty$, these expectations are finite, and as $\Dcal_v$ is strictly increasing for all $v\in V$ it yields that $\bm_{\overrightarrow{\d r}}^-=\bm_{\overrightarrow{\d r}}^+$, $\rho$-almost surely. By Lemma~2.3 \cite{Aldous07}, this result extends to the edges incoming to $\rho$-almost every vertex in $G$, hence $\bm^-=\bm^+=\bm^{(\lambda)}$ $\rho$-a.s.
\end{proof}

\begin{lemma}\label{lemma: computation trick}
Let $\bm_\ordv\in\Pcal^\ordv$. We have
\begin{align*}
\ind(|\bbeta_\ordv|>b_v)\sum_{\ore\in\ordv}\frac{\sum_x (x-\Scal_\ole(\bbeta))\Qcal_\ole[\bn](x) n_\ore(x)}{\sum_x \Qcal_\ole[\bn](x) n_\ore(x)}=\left(b_v-|\bScal_\oldv(\bbeta)|\right)^+
\end{align*}
\end{lemma}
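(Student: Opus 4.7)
The approach is to unfold $\Qcal_\ole[\bn]$, recognize each summand as a conditional expectation against the product measure $\bn_\ordv$, sum over $\ore$ so that the coordinates collapse to $b_v$, and then reconcile the indicator on the left with the positive part on the right by a case analysis on $|\bbeta_\ordv|$ versus $b_v$.

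Concretely, for $\ore \in \ordv$ with reverse $\ole\in\oldv$, the non-degenerate form of $\Qcal_\ole[\bn](x)$ (which applies as soon as $|\bbeta_{\partial\ole}| \geq b_v - c_e$) together with the identity $n_\ore(x)\bn_{\partial\ole}(\by) = \bn_\ordv(\mathbf z)$ for the configuration $\mathbf z$ with $z_\ore = x$ and $\mathbf z_{\partial\ole} = \by$ lets me rewrite, after cancellation of the $\Qcal_\ole$-normalizer,
\[
\frac{\sum_x x\,\Qcal_\ole[\bn](x)\,n_\ore(x)}{\sum_x \Qcal_\ole[\bn](x)\,n_\ore(x)} \;=\; \frac{\sum_{\mathbf z \le \bbeta,\,|\mathbf z|=b_v} z_\ore\,\bn_\ordv(\mathbf z)}{\sum_{\mathbf z \le \bbeta,\,|\mathbf z|=b_v} \bn_\ordv(\mathbf z)}.
\]
The denominator is positive exactly when $|\bbeta_\ordv| \geq b_v$, and in that regime $|\bbeta_{\partial\ole}| = |\bbeta_\ordv|-\beta_\ore \geq b_v - c_e$ is automatic from $\beta_\ore \leq c_e$, so the manipulations are legitimate. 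Summing over $\ore \in \ordv$ on the right exchanges sum and ratio and makes $\sum_\ore z_\ore$ collapse to $|\mathbf z| = b_v$; hence, whenever $|\bbeta_\ordv| > b_v$, the entire left-hand side of the lemma equals $b_v - |\bScal_\oldv(\bbeta)|$.

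It then remains to match this against $(b_v - |\bScal_\oldv(\bbeta)|)^+$. When $|\bbeta_\ordv| > b_v$, setting $d = |\bbeta_\ordv| - b_v > 0$, the formula $\Scal_\ole(\bbeta) = [\beta_\ore - d]_0^{c_e}$ reduces, thanks to $\beta_\ore \leq c_e$, to $(\beta_\ore - d)^+ = \beta_\ore - \min(\beta_\ore,d)$; and $\sum_\ore \min(\beta_\ore,d) \geq d$ (either some $\beta_\ore \geq d$, or else $\sum_\ore \beta_\ore = |\bbeta_\ordv| \geq d$) yields $|\bScal_\oldv(\bbeta)| \leq b_v$, so the expression already coincides with its positive part. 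When $|\bbeta_\ordv| \leq b_v$, the indicator kills the left-hand side, and one needs $|\bScal_\oldv(\bbeta)| \geq b_v$; writing $\Scal_\ole(\bbeta) = \beta_\ore + \min(b_v - |\bbeta_\ordv|,\,c_e - \beta_\ore)$ reduces this to $\sum_\ore \min(b_v-|\bbeta_\ordv|,\,c_e - \beta_\ore) \geq b_v - |\bbeta_\ordv|$, which I would settle under the natural feasibility hypothesis $|\bc_\ordv| \geq b_v$ (outside of which the vertex constraint at $v$ is vacuous and the setting is uninteresting).

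The main obstacle is precisely this last step: the positive-part equality in the regime $|\bbeta_\ordv| \leq b_v$ is not a formal consequence of the computation but requires $|\bc_\ordv| \geq b_v$, an assumption natural to the underlying allocation context and implicit in the way the lemma is invoked, but absent from the bare statement.
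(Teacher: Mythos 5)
Your proof follows essentially the same route as the paper's: unfold the non-degenerate form of $\Qcal_\ole[\bn]$, merge $\Qcal_\ole[\bn](x)\,n_\ore(x)$ into the product measure $\bn_\ordv$ restricted to the slice $\{|\mathbf z|=b_v\}$ (with the $\Qcal$-normalizer cancelling in the ratio), telescope the sum over $\ore\in\ordv$ so that $\sum_\ore z_\ore$ collapses to $b_v$, and then verify that the positive-part sign on the right is harmless. In the regime $|\bbeta_\ordv|>b_v$ your check $|\bScal_\oldv(\bbeta)|\leq b_v$ via $\Scal_\ole(\bbeta)=\beta_\ore-\min(\beta_\ore,d)$ and $\sum_\ore\min(\beta_\ore,d)\geq d$ is exactly what the paper's ``it is then easy to check'' alludes to, so here you have simply filled in a detail.

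Where you and the paper diverge is the regime $|\bbeta_\ordv|\leq b_v$. The paper opens its proof with the flat assertion ``Both sides are equal to $0$ unless $|\bbeta_\ordv|>b_v$'' and never returns to justify that the right-hand side vanishes. You correctly observe that this is not automatic, and that the reduction $|\bScal_\oldv(\bbeta)|\geq b_v \iff \sum_\ore\min\bigl(b_v-|\bbeta_\ordv|,\,c_e-\beta_\ore\bigr)\geq b_v-|\bbeta_\ordv|$ only closes under the feasibility hypothesis $\sum_{e\in\d v}c_e\geq b_v$, which is not in the lemma's statement. The gap is real: take $\d v=\{e\}$ with $c_e=1$, $b_v=5$, $\beta_\ore=1$; then $|\bbeta_\ordv|=1\leq b_v$, so the left side is $0$, while $\Scal_\ole(\bbeta)=[\,5-0\,]_0^1=1$ gives a right side of $(5-1)^+=4$. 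The paper's downstream application (the final equality $\ind(|\bbeta_{\overrightarrow{\d r}}(\bn)|>b_r)(b_r-|\bScal_{\overleftarrow{\d r}}(\bbeta(\bn))|)^+=(b_r-|\balpha_{\overleftarrow{\d r}}(\bm')|)^+$ in the proof of Lemma~\ref{lemma: computing degree}) does rely on the right side vanishing in this regime, so the missing hypothesis is genuinely load-bearing. Your diagnosis and the suggested repair --- assume $\sum_{e\in\d v}c_e\geq b_v$, which is natural since otherwise the vertex constraint at $v$ is vacuous, or else keep the indicator alongside the positive part in the conclusion --- are the right way to patch it. In short, your proposal matches the paper's approach and is more careful than the source at the one point where the source is too terse.
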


\begin{proof}
Both sides are equal to $0$ unless $|\bbeta_\ordv|>b_v$. We have
\begin{align*}
&\ind(|\bbeta_\ordv|>b_v)\sum_{\ore\in\ordv}\frac{\sum_x (x-\Scal_\ole(\bbeta))\Qcal_\ole[\bn](x) n_\ore(x)}{\sum_x \Qcal_\ole[\bn](x) n_\ore(x)}\\
&=\ind(|\bbeta_\ordv|>b_v)\frac{\sum_{|\bx|=b_v}(|\bx|-|\bScal_\oldv(\bbeta)|)\bn_\ordv(\bx)}{\sum_{|\bx|=b_v}\bn_\ordv(\bx)}\\
&=(b_v-|\bScal_\oldv(\bbeta)|)\ind(|\bbeta_\ordv|>b_v)\\
&=(b_v-|\bScal_\oldv(\bbeta)|)^+
\end{align*}
where the last line follows from the fact $\Scal_\ole(\bbeta)=[b_v-|\bbeta_\ordv|+\beta_\ore]_0^{c_e}\leq\beta_\ore$ if $|\bbeta_\ordv|>b_v$, the inequality being strict whenever $\beta_\ore>0$, and it is then easy to check that $\sum_{\ole\in\oldv}\Scal_\ole(\bbeta)\leq\sum_{\ore\in\ordv}\beta_\ore-(|\bbeta_\ordv|-b_v)\leq b_v$.
\end{proof}

\begin{lemma}\label{lemma: computing degree}
Let $F_v(\balpha)=\min(b_v,|\balpha_\ordv|)+(b_v-|\balpha_\oldv|)^+$, $\forall v\in V$. Consider $\bm'=\Qcal_G\circ\Rcal_G[\bm]$ such that $\balpha(\bm)=\balpha(\bm')$.
\begin{itemize}
\item If $\bm'\geq_\lru\bm$, then $\int\Dcal_r[\bm]d\rho([G,r])\leq\int F_r(\balpha(\bm))d\rho([G,r])$.
\item If $\bm'\leq_\lru\bm$, then $\int\Dcal_r[\bm]d\rho([G,r])\geq\int F_r(\balpha(\bm))d\rho([G,r])$.
\end{itemize}
\end{lemma}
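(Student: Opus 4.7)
The plan is to combine the Mass Transport Principle (MTP) with Lemma~\ref{lemma: computation trick} and the fact that reweighting preserves the $\lru$-order (Lemma~\ref{lemma: reweighting and shifted reversal with lru-order}). Set $\bn:=\Rcal_G[\bm]$, so that $\bm'=\Qcal_G[\bn]$; the hypothesis $\balpha(\bm)=\balpha(\bm')$ translates into $\Scal_\ole(\bbeta(\bn))=\alpha(m'_\ole)=\alpha_\ole$ for every directed edge $\ole$, which is exactly the shift appearing in Lemma~\ref{lemma: computation trick}.

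First I would express $\Dcal_r[\bm]$ as a sum over neighbors, via the identity derived in the proof of Proposition~\ref{prop: unique fixed-point in finite graphs},
\begin{equation*}
\Dcal_r[\bm]=\sum_{\ore\in\ordv}\frac{\sum_x x\,m_\ore(x)\,n_\ole(x)}{\sum_x m_\ore(x)\,n_\ole(x)},
\end{equation*}
and then apply MTP to the integrand (with $f(G,r,v)$ taken equal to the $\ore=\overrightarrow{vr}$ summand) to swap the roles of the root and its neighbor:
\begin{equation*}
\int\Dcal_r[\bm]d\rho([G,r])=\int\sum_{\ore\in\ordv}\frac{\sum_x x\,m_\ole(x)\,n_\ore(x)}{\sum_x m_\ole(x)\,n_\ore(x)}d\rho([G,r]).
\end{equation*}
The summand now involves the outgoing message $m_\ole$ from $r$ and the incoming message $n_\ore$ to $r$, which is precisely the configuration required to compare with Lemma~\ref{lemma: computation trick}.

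Next, exploit the hypothesis $\bm'\geq_\lru\bm$ (the reverse case is symmetric). Since $m'_\ole\geq_\lru m_\ole$ edgewise and reweighting by $n_\ore$ preserves the $\lru$-order, log-concavity of all messages guaranteed by Proposition~\ref{prop: monotonicity of local operators} gives the corresponding mean inequality
\begin{equation*}
\frac{\sum_x x\,m_\ole(x)\,n_\ore(x)}{\sum_x m_\ole(x)\,n_\ore(x)}\leq\frac{\sum_x x\,m'_\ole(x)\,n_\ore(x)}{\sum_x m'_\ole(x)\,n_\ore(x)}=\alpha_\ole+B_\ore,
\end{equation*}
where $B_\ore:=\frac{\sum_x(x-\alpha_\ole)m'_\ole(x)n_\ore(x)}{\sum_x m'_\ole(x)n_\ore(x)}$. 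Summing over $\ore\in\ordv$ and integrating yields $\int\Dcal_r[\bm]d\rho\leq\int|\balpha_\oldv|d\rho+\int\sum_\ore B_\ore d\rho$. A further application of MTP, now to $f(G,r,v)=\alpha_{\overrightarrow{vr}}$, gives $\int|\balpha_\oldv|d\rho=\int|\balpha_\ordv|d\rho$, and Lemma~\ref{lemma: computation trick} identifies $\int \ind(|\bbeta(\bn)_\ordv|>b_r)\sum_\ore B_\ore d\rho$ with $\int(b_r-|\balpha_\oldv|)^+d\rho$. A case split against the trivial bound $\Dcal_r[\bm]\leq b_r$ (which already gives $\Dcal_r\leq F_r$ pointwise whenever $|\balpha_\ordv|>b_r$, since then $\Dcal_r=b_r\leq b_r+(b_r-|\balpha_\oldv|)^+=F_r$) then matches $\int|\balpha_\ordv|d\rho$ with $\int\min(b_r,|\balpha_\ordv|)d\rho$ in the complementary unsaturated regime, closing the comparison with $\int F_r(\balpha)d\rho$.

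The main obstacle is the saturation bookkeeping: the indicator $\ind(|\bbeta(\bn)_\ordv|>b_r)$ appearing in Lemma~\ref{lemma: computation trick} does not literally coincide with $\ind(|\balpha_\ordv|>b_r)$, so combining the $\lru$-based mean inequality above with Lemma~\ref{lemma: computation trick} and the trivial bound $\Dcal_r\leq b_r$ requires splitting the integration domain and verifying, via the extended definitions of $\Rcal$, $\Qcal$, $\Dcal$ from equations~(\ref{eqn: def Rcal})--(\ref{eqn: def Qcal}), that the degenerate (``$\delta$-function'') cases contribute consistently on each side. The reverse implication $\bm'\leq_\lru\bm\Rightarrow\int\Dcal_r[\bm]d\rho\geq\int F_r(\balpha)d\rho$ follows by reversing every $\lru$-inequality above, each step being monotone in the $\lru$-order.
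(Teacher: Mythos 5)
Your proposal identifies the correct ingredients (the $\lru$-based mean comparison between $m$ and $m'$ reweighted by $n$, the Mass-Transport Principle, Lemma~\ref{lemma: computation trick}, and the key simplification $\Scal_\ole(\bbeta(\bn))=\alpha_\ole$), but the order in which you deploy them leaves a gap that you acknowledge without closing, and that in fact cannot be closed in the order you propose.

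The first step already fails: the identity
\begin{align*}
\Dcal_r[\bm]=\sum_{\ore\in\ordv}\frac{\sum_x x\,m_\ore(x)\,n_\ole(x)}{\sum_x m_\ore(x)\,n_\ole(x)},\qquad \bn=\Rcal_G[\bm],
\end{align*}
is only valid on the non-degenerate event $\{|\balpha_\ordv(\bm)|<b_r\}$. On the complement, equation~(\ref{eqn: def Dcal}) declares $\Dcal_r[\bm]=b_r$, while for each $\ore$ one has $\Scal_\ole(\balpha(\bm))\leq\alpha_\ore$, so the supports of $m_\ore$ and $n_\ole$ meet in at most the single point $\alpha_\ore$ and the ratio is either $0$ or an indeterminate $0/0$; the sum does not equal $b_r$. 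So the integrand you feed to the MTP is not $\Dcal_r[\bm]$ on the degenerate part of $\Gcal_*$. Since the MTP is an identity of integrals over \emph{all} of $\Gcal_*$, you cannot patch this afterwards by a pointwise case split on the root-dependent event $\{|\balpha_\ordv|>b_r\}$: restricting the integration domain destroys the symmetry the MTP provides, and the bound $\Dcal_r\leq b_r\leq F_r$ you invoke there does not combine with the MTP-manipulated quantities, which at that stage refer to a mixture of root and neighbour data. Concretely, after your chain of inequalities you are left with $\int|\balpha_\ordv|d\rho$ where $F_r$ contributes only $\int\min(b_r,|\balpha_\ordv|)d\rho$, and an extra term $\int\ind(|\bbeta_\ordv(\bn)|\leq b_r)\sum_\ore B_\ore\,d\rho\geq 0$ that Lemma~\ref{lemma: computation trick} does not absorb; neither discrepancy is cancelled by the ``trivial bound'' argument.

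The paper's proof avoids all this by decomposing before anything else:
\begin{align*}
\Dcal_r[\bm]=\min\bigl(b_r,|\balpha_\ordv|\bigr)+\ind\bigl(|\balpha_\ordv|<b_r\bigr)\sum_{v\sim r}\frac{\sum_x\bigl(x-\alpha(m_{\overrightarrow{vr}})\bigr)m_{\overrightarrow{vr}}(x)\,\Rcal_{\overrightarrow{rv}}[\bm](x)}{\sum_x m_{\overrightarrow{vr}}(x)\,\Rcal_{\overrightarrow{rv}}[\bm](x)},
\end{align*}
which is an exact identity including the degenerate cases, and in which the correction term is automatically zero precisely where the ratios become problematic (the numerator is non-zero only when $\alpha_\ore<\Scal_\ole(\balpha)$). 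It is to this correction term, after the $\lru$-bound replacing $m$ by $m'$, that one applies the MTP \emph{once} and then Lemma~\ref{lemma: computation trick}; the term $\min(b_r,|\balpha_\ordv|)$ is carried along untouched and matches the first summand of $F_r$ directly. Your second MTP on $\alpha_{\overrightarrow{vr}}$ is then unnecessary, and the saturation bookkeeping you flag as the main obstacle disappears. To repair the proposal, perform this decomposition first and only then apply the $\lru$-inequality, the MTP, and Lemma~\ref{lemma: computation trick}.
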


\begin{proof}
Let $\bn=\Rcal_G[\bm]$. For any $v\in V$, $|\balpha_\ordv(\bm)|\leq b_v$ is equivalent to $|\bbeta_\oldv(\bn)|\geq b_v$. We have
\begin{align*}
\int\Dcal_r[\bm]d\rho([G,r])&=\int\Bigg(\min(b_r,|\balpha_{\overrightarrow{\d r}}(\bm)|)\\
&+\ind(|\balpha_{\overrightarrow{\d r}}(\bm)|<b_r)\sum_{v\in\d r}\frac{\sum_{x\in\N}(x-\alpha(m_{\overrightarrow{vr}}))m_{\overrightarrow{vr}}(x)\Rcal_{\overrightarrow{rv}}[\bm](x)}{\sum_{x\in\N}m_{\overrightarrow{vr}}(x)\Rcal_{\overrightarrow{rv}}[\bm](x)}\Bigg)d\rho([G,r])
\end{align*}

Furthermore, for any $v\sim r$, $\sum_{x\in\N}(x-\alpha(m_{\overrightarrow{vr}}))m_{\overrightarrow{vr}}(x)\Rcal_{\overrightarrow{rv}}[\bm](x)$ is non-zero only if $\alpha(m_{\overrightarrow{vr}})<\Scal_{\overrightarrow{rv}}(\balpha(\bn))$, which also implies $|\balpha_{\overrightarrow{\d r}}(\bm)|<b_r$. Thus, the second term in the expression of $\int\Dcal_r[\bm]d\rho([G,r])$ above is equal to
\begin{align*}
\int\sum_{v\sim r}\frac{\sum_{x\in\N}(x-\alpha(m_{\overrightarrow{vr}}))m'_{\overrightarrow{vr}}(x)\Rcal_{\overrightarrow{rv}}[\bm](x)}{\sum_{x\in\N}m'_{\overrightarrow{vr}}(x)\Rcal_{\overrightarrow{rv}}[\bm](x)}\ind(\alpha_{\overrightarrow{vr}}<\Scal_{\overrightarrow{rv}}(\balpha(\bn)))d\rho([G,r])
\end{align*}

Suppose first that $\bm'\geq_\lru\bm$. Then, for any $v\sim r$, we have $m'_{\overrightarrow{vr}}\centerdot\Rcal_{\overrightarrow{rv}}[\bm]\geq_\lru m_{\overrightarrow{vr}}\centerdot\Rcal_{\overrightarrow{rv}}[\bm]$, as also $\alpha(m'_{\overrightarrow{vr}})=\alpha(m_{\overrightarrow{vr}})$. Hence, we have
\begin{align*}
\frac{\sum_{x\in\N}(x-\alpha(m_{\overrightarrow{vr}}))m'_{\overrightarrow{vr}}(x)\Rcal_{\overrightarrow{rv}}[\bm](x)}{\sum_{x\in\N}m'_{\overrightarrow{vr}}(x)\Rcal_{\overrightarrow{rv}}[\bm](x)}\geq\frac{\sum_{x\in\N}(x-\alpha(m_{\overrightarrow{vr}}))m_{\overrightarrow{vr}}(x)\Rcal_{\overrightarrow{rv}}[\bm](x)}{\sum_{x\in\N}m_{\overrightarrow{vr}}(x)\Rcal_{\overrightarrow{rv}}[\bm](x)}
\end{align*}

It follows
\begin{align*}
&\int\sum_{v\sim r}\frac{\sum_{x\in\N}(x-\alpha(m_{\overrightarrow{vr}}))m'_{\overrightarrow{vr}}(x)\Rcal_{\overrightarrow{rv}}[\bm](x)}{\sum_{x\in\N}m'_{\overrightarrow{vr}}(x)\Rcal_{\overrightarrow{rv}}[\bm](x)}\ind(\alpha(m_{\overrightarrow{vr}})<\Scal_{\overrightarrow{rv}}(\balpha(\bn)))d\rho([G,r])\\
&\geq\int\sum_{v\sim r}\frac{\sum_{x\in\N}(x-\alpha(m_{\overrightarrow{vr}}))m'_{\overrightarrow{vr}}(x)\Rcal_{\overrightarrow{rv}}[\bm](x)}{\sum_{x\in\N}m'_{\overrightarrow{vr}}(x)\Rcal_{\overrightarrow{rv}}[\bm](x)}\ind(\alpha(m'_{\overrightarrow{vr}})<\Scal_{\overrightarrow{rv}}(\balpha(\bn)))d\rho([G,r])\\
&=\int\sum_{v\sim r}\frac{\sum_{x\in\N}(x-\Scal_{\overrightarrow{vr}}(\bbeta(\bn)))\Qcal_{\overrightarrow{vr}}[\bn](x)n_{\overrightarrow{rv}}(x)}{\sum_{x\in\N}\Qcal_{\overrightarrow{vr}}[\bn](x)n_{\overrightarrow{rv}}(x)}\ind(\Scal_{\overrightarrow{vr}}(\bbeta(\bn))<\beta(n_{\overrightarrow{rv}}))d\rho([G,r])\\
&=\int\sum_{v\sim r}\frac{\sum_{x\in\N}(x-\Scal_{\overrightarrow{rv}}(\bbeta(\bn)))\Qcal_{\overrightarrow{rv}}[\bn](x)n_{\overrightarrow{vr}}(x)}{\sum_{x\in\N}\Qcal_{\overrightarrow{rv}}[\bn](x)n_{\overrightarrow{vr}}(x)}\ind(\Scal_{\overrightarrow{rv}}(\bbeta(\bn))<\beta(n_{\overrightarrow{vr}}))d\rho([G,r]),
\end{align*}
where the last equality follows from by the Mass-Transport Principle. Given that $\Scal_{\overrightarrow{rv}}(\bbeta(\bn))<\beta(n_{\overrightarrow{vr}})$ implies $|\bbeta_{\overrightarrow{\d r}}(\bn)|>b_r$, $\Qcal_{\overrightarrow{rv}}[\bn](x)n_{\overrightarrow{vr}}(x)$ is non-zero only if $\Scal_{\overrightarrow{rv}}(\bbeta(\bn))\leq\beta(n_{\overrightarrow{vr}})$, and $|\bbeta_{\overrightarrow{\d r}}(\bn)|>b_r$ implies $\Scal_{\overrightarrow{rv}}(\bbeta(\bn))\leq\beta(n_{\overrightarrow{vr}})$, we have in fact
\begin{align*}
&\sum_{v\sim r}\frac{\sum_{x\in\N}(x-\Scal_{\overrightarrow{rv}}(\bbeta(\bn)))\Qcal_{\overrightarrow{rv}}[\bn](x)n_{\overrightarrow{vr}}(x)}{\sum_{x\in\N}\Qcal_{\overrightarrow{rv}}[\bn](x)n_{\overrightarrow{vr}}(x)}\ind(\Scal_{\overrightarrow{rv}}(\bbeta(\bn))<\beta(n_{\overrightarrow{vr}}))\\
&=\ind(|\bbeta_{\overrightarrow{\d r}}(\bn)|>b_r|)\sum_{v\sim r}\frac{\sum_{x\in\N}(x-\Scal_{\overrightarrow{rv}}(\bbeta(\bn)))\Qcal_{\overrightarrow{rv}}[\bn](x)n_{\overrightarrow{vr}}(x)}{\sum_{x\in\N}\Qcal_{\overrightarrow{rv}}[\bn](x)n_{\overrightarrow{vr}}(x)}\\
&=\ind(|\bbeta_{\overrightarrow{\d r}}(\bn)|>b_r|)(b_r-|\bScal_{\overleftarrow{\d r}}(\bbeta(\bn))|)^+=(b_r-|\balpha_{\overleftarrow{\d r}}(\bm')|)^+,
\end{align*}
according to Lemma~\ref{lemma: computation trick}. We thus obtained that
\begin{align*}
\int\Dcal_r[\bm]d\rho([G,r])\leq\int F_r(\balpha(\bm))d\rho([G,r])\text{ if }\bm'\geq_\lru\bm.
\end{align*}

The proof for the case $\bm'\leq_\lru\bm$ is identical.
\end{proof}

\begin{proposition*}[BP estimate in unimodular random graphs; Proposition~\ref{prop: BP estimate in unimodular random graphs}]
Let $\rho\in\Ucal$ with $\overline b(\rho)<\infty$, we have
\begin{align*}
\lim\uparrow_{\lambda\to\infty}\int\Dcal_r[\bm^{(\lambda)}]d\rho([G,r])&=\int\Dcal_r[\bm^{(\infty)}]d\rho([G,r])=\int F_r(\balpha^{(\infty)})d\rho([G,r])\\
&=\inf_{\balpha=\Scal_G\circ\Scal_G(\balpha)}\int F_r(\balpha)d\rho([G,r]),
\end{align*}
where $F_v(\balpha)=\min(b_v,|\balpha_\ordv|)+(b_v-|\balpha_\oldv|)^+$.
\end{proposition*}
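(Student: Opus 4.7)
The proposition contains three equalities, which I plan to establish in order: passing the $\lambda$-limit inside the integral, identifying the value at $\lambda=\infty$ with $\int F_r(\balpha^{(\infty)})\,d\rho$, and showing that $\balpha^{(\infty)}$ attains the infimum.

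For the first equality, I will use Proposition~\ref{prop: limit of $0$ temperature} to get $\bm^{(\lambda)}\uparrow_\lru\bm^{(\infty)}$ as $\lambda\to\infty$. Combining monotonicity of $\Dcal_r$ for the $\lru$-order (Proposition~\ref{prop: monotonicity of local operators}) with its continuity at the limiting support (both branches $|\balpha_\ordv|\leq b_v$ and $|\balpha_\ordv|>b_v$ of Proposition~\ref{prop: continuity for log-concave inputs and limiting operator}), one gets $\Dcal_r[\bm^{(\lambda)}]\uparrow\Dcal_r[\bm^{(\infty)}]$ pointwise, and since $\Dcal_r\leq b_r$ with $\overline b(\rho)<\infty$, monotone convergence closes the step. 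The second equality is a direct application of Lemma~\ref{lemma: computing degree} with $\bm=\bm^{(\infty)}$: the relation $\bm^{(\infty)}=\Qcal_G\circ\Rcal_G[\bm^{(\infty)}]$ makes both hypotheses $\bm'\geq_\lru\bm$ and $\bm'\leq_\lru\bm$ trivial, so the lemma yields the desired equality. The fact that $\balpha^{(\infty)}$ is itself a fixed point of $\Scal_G\circ\Scal_G$ (hence a legitimate candidate in the infimum) follows at once from the support identities $\beta(\Rcal_\ore[\bm])=\Scal_\ore(\balpha(\bm))$ and $\alpha(\Qcal_\ore[\bn])=\Scal_\ore(\bbeta(\bn))$ read off from (\ref{eqn: def Rcal})--(\ref{eqn: def Qcal}).

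For the third equality, the plan is to show that for any $\balpha$ with $\Scal_G\circ\Scal_G(\balpha)=\balpha$ one has $\int F_r(\balpha^{(\infty)})\,d\rho\leq\int F_r(\balpha)\,d\rho$. The route is to associate to such a $\balpha$ a fixed point $\bm^*$ of $\Qcal_G\circ\Rcal_G$ with $\balpha(\bm^*)=\balpha$, then invoke the minimality of $\bm^{(\infty)}$. Concretely, I would initialize $\bm^{(0)}=(\delta_{\alpha_\ore})_{\ore\in\orE}$ and iterate $\bm^{(k+1)}=\Qcal_G\circ\Rcal_G[\bm^{(k)}]$. The support identities combined with the hypothesis $\Scal_G\circ\Scal_G(\balpha)=\balpha$ force $\balpha(\bm^{(k)})=\balpha$ for every $k$, and the suprema of the supports stabilize for $k\geq 1$, so the iterates live in a common finite-dimensional simplex. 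The base comparison $\bm^{(0)}\leq_\lru\bm^{(1)}$ rests on the elementary fact that a point mass $\delta_a$ is $\leq_\lru$ any distribution whose support has infimum $\geq a$; monotonicity of $\Qcal_G\circ\Rcal_G$ for $\lru$ (composition of two non-increasing operators, by Proposition~\ref{prop: monotonicity of local operators}) then propagates this to an $\lru$-non-decreasing sequence, which converges in $L_1$ on the above simplex to some log-concave $\bm^*$, and continuity of the operators on this stable-support regime ensures $\bm^*=\Qcal_G\circ\Rcal_G[\bm^*]$. Applying Lemma~\ref{lemma: computing degree} to $\bm^*$ yields $\int\Dcal_r[\bm^*]\,d\rho=\int F_r(\balpha)\,d\rho$. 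Minimality of $\bm^{(\infty)}$ among $\Qcal_G\circ\Rcal_G$-fixed points (Proposition~\ref{prop: limit of $0$ temperature}) then forces $\bm^{(\infty)}\leq_\lru\bm^*$, and monotonicity of $\Dcal_r$ followed by integration (integrable because $\overline b(\rho)<\infty$) gives $\int\Dcal_r[\bm^{(\infty)}]\,d\rho\leq\int\Dcal_r[\bm^*]\,d\rho$, closing the chain.

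The main obstacle will be the third paragraph, specifically the support-preservation and monotonicity of the iterates $\bm^{(k)}$: the equality $\balpha(\bm^{(k)})=\balpha$ for all $k$ is exactly where the hypothesis $\Scal_G\circ\Scal_G(\balpha)=\balpha$ is used, and the $\lru$-monotonicity of the sequence starting from a point-mass boils down to the elementary comparison $\delta_a\leq_\lru m'$ whenever $\alpha(m')\geq a$, which seeds the induction then driven by global monotonicity of $\Qcal_G\circ\Rcal_G$. In the unimodular setting one has additionally to check measurability of $\bm^*$ in the marked graph and move from pointwise to integral inequalities, but this proceeds exactly as for the integrations already carried out in the proofs of Propositions~\ref{prop: unique fixed-point in unimodular graphs} and~\ref{prop: BP estimate in finite graphs}, using the MTP for the direction reversal.
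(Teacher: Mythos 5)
Your first two equalities track the paper exactly: monotone convergence together with continuity of $\Dcal_r$ on $\tPcallc$ for the $\lambda$-limit, and Lemma~\ref{lemma: computing degree} applied to the fixed point $\bm^{(\infty)}$ for the identification with $\int F_r(\balpha^{(\infty)})\,d\rho$. The iteration $\bm^{(k+1)}=\Qcal_G\circ\Rcal_G[\bm^{(k)}]$ seeded at $\bm^{(0)}_\ore=\delta_{\alpha_\ore}$, the support identity $\balpha(\bm^{(k)})=\balpha$ for each finite $k$, the $\lru$-monotonicity, and the appeal to minimality of $\bm^{(\infty)}$ are all the same ingredients the paper uses.

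There is, however, a genuine gap in the last step. You apply Lemma~\ref{lemma: computing degree} to the $\lru$-increasing limit $\bm^*=\lim\uparrow_k\bm^{(k)}$ and write $\int\Dcal_r[\bm^*]\,d\rho = \int F_r(\balpha)\,d\rho$. But the lemma only gives $\int\Dcal_r[\bm^*]\,d\rho = \int F_r(\balpha(\bm^*))\,d\rho$, and there is no reason that $\balpha(\bm^*)=\balpha$. The equality $\balpha(\bm^{(k)})=\balpha$ holds at every finite $k$, but an $\lru$-increasing sequence can lose mass at the lower end of its support in the limit: for instance $m_k=(1/k,\,1-1/k)$ on $\{0,1\}$ is $\lru$-increasing and converges to $\delta_1$, whose support has shed the point $0$. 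The paper is explicit on this point, asserting only $\balpha(\bm)\geq\balpha$ for the iteration limit, and $F_r$ is not monotone in $\balpha$ (the term $\min(b_v,|\balpha_\ordv|)$ increases while $(b_v-|\balpha_\oldv|)^+$ decreases), so $\int F_r(\balpha(\bm^*))\,d\rho$ need bear no prescribed relation to $\int F_r(\balpha)\,d\rho$. The fix, which is what the paper actually does, is to apply Lemma~\ref{lemma: computing degree} at finite $k$: since $\bm^{(k+1)}\geq_\lru\bm^{(k)}$, the first bullet of the lemma gives $\int\Dcal_r[\bm^{(k)}]\,d\rho\leq\int F_r(\balpha(\bm^{(k)}))\,d\rho=\int F_r(\balpha)\,d\rho$, and then one lets $k\to\infty$ on the left (using monotone convergence and $\lru$-monotonicity of $\Dcal_r$) to obtain $\int\Dcal_r[\bm^*]\,d\rho\leq\int F_r(\balpha)\,d\rho$, which you then combine with minimality of $\bm^{(\infty)}$ exactly as you intended. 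This sidesteps any claim about $\balpha(\bm^*)$.
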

\begin{proof}
For any $v\in V$, as $\Dcal_v$ is continuous for inputs in $\tPcallc$, such as $\bm^{(\lambda)}$, hence we have $\lim\uparrow_{\lambda\to\infty}\Dcal_v[\bm^{(\lambda)}]=\Dcal_v[\bm^{(\infty)}]$ and the first equality follows by monotone convergence. Lemma~\ref{lemma: computing degree} shows
\begin{align*}
\int\Dcal_r[\bm^{(\infty)}]d\rho([G,r])=\int F_r(\balpha(\bm^{(\infty)}))d\rho([G,r]).
\end{align*}

It is clear that $\balpha(\bm^{(\infty)})$ satisfies $\balpha(\bm^{(\infty)})=\Scal_G\circ\Scal_G(\balpha(\bm^{(\infty)}))$, as $\bm^{(\infty)}$ satisfies equation~(\ref{eqn: two-step fixed-point}). Consider now $\balpha\in\N^\orE$ such that $\balpha=\Scal_G\circ\Scal_G(\balpha)$ and the measure $\rho$ with marks $\balpha$ is unimodular. We define $\bm^{(0)}_\ore=\delta_{\alpha_\ore}$ and $\bm^{(k+1)}=\Qcal_G\circ\Rcal_G[\bm^{(k)}]$. We have $\balpha(\bm^{(k)})=\balpha$ for any $k\in\N$, and then necessarily $\bm^{(k)}\geq_\lru\bm^{(0)}$ (it suffices to look at the support of $m_\ore^{(k)}$ and $m_\ore^{(0)}$ to check this). It follows, because $\Qcal_G\circ\Rcal_G$ is non-decreasing, that $\left(\bm^{(k)}\right)_{k\in\N}$ is a non-decreasing sequence, and thus $\int\Dcal_r[\bm^{(k)}]d\rho([G,r])$ is non-decreasing in $k$ (as $\Dcal_r$ is non-decreasing).

We define $\bm=\lim\uparrow_{k\in\N}\bm^{(k)}$. Clearly, $\balpha(\bm)\geq\balpha$ and $\bm=\Qcal_G\circ\Rcal_G(\bm)$. Moreover, by monotone convergence, we have
\begin{align*}
\lim\uparrow_{k\to\infty}\int\Dcal_r[\bm^{(k)}]d\rho([G,r])&=\int\Dcal_r[\bm]d\rho([G,r])=\int F_r(\balpha(\bm))d\rho([G,r])\\
&\geq\int\Dcal_r[\bm^{(\infty)}]d\rho([G,r])=\int F_r(\balpha(\bm^{(\infty)}))d\rho([G,r]),
\end{align*}
where we used Lemma~\ref{lemma: computing degree} for $\bm$ and for $\bm^{(\infty)}$ and Proposition~\ref{prop: limit of $0$ temperature} together with the fact $\Dcal_r$ is non-decreasing to get the inequality. For any $k\in\N$ we have $\bm^{(k+1)}=\Qcal_G\circ\Rcal_G[\bm^{(k)}]\geq_\lru\bm^{(k)}$, thus applying Lemma~\ref{lemma: computing degree} we obtain that
\begin{align*}
\int F_r(\balpha)d\rho([G,r])&=\int F_r(\balpha(\bm^{(k)}))d\rho([G,r])\geq\int\Dcal_r[\bm^{(k)}]d\rho([G,r])\\
&\nearrow_{k\to\infty}\int\Dcal_r[\bm^{(\infty)}]d\rho([G,r])\geq\int F_r(\balpha(\bm^{(\infty)}))d\rho([G,r]),
\end{align*}
which completes the proof.
\end{proof}

\subsection{From finite graphs to infinite unimodular trees}
The following proposition is not proved here. The interested reader can refer to \cite{bls11}, \cite{Salez11} and \cite{Lelarge12}, where similar results appear.
\begin{proposition*}[Asymptotic correctness for large, sparse random graphs; Proposition~\ref{prop: asymptotic correctness for unimodular random trees}]
Let $G_n=(V_n,E_n)_n$ be a sequence of finite marked graphs with random weak limit $\rho$ concentrated on unimodular trees, with $\overline b(\rho)<\infty$. Then,
\begin{align*}
\lim_{n\to\infty}\frac{2M_n}{|V_n|}=\int\Dcal_r[\bm^{(\infty)}]d\rho([G,r])=\inf_{\balpha=\Scal_G\circ\Scal_G(\balpha)}\int F_r(\balpha)d\rho([G,r]).
\end{align*}
\end{proposition*}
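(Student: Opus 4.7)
The plan is the standard interchange-of-limits argument used in~\cite{bls11,Salez11,Lelarge12}. The starting point is equation~(\ref{eqn: why marginals?}), which gives
$$\frac{2M(G_n)}{|V_n|}=\lim_{\lambda\to\infty}\EE\left[\mu^\lambda_{G_n}\Big(\sum_{e\in\d R_n}X_e\Big)\right],$$
with $R_n$ a uniformly random vertex in $G_n$. The target is to send $n\to\infty$ first at fixed $\lambda$, then $\lambda\to\infty$, and finally to justify swapping these with the outer $\lim_\lambda$ coming from $M(G_n)/|V_n|$.

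First I would fix $\lambda>0$ and show
$$\lim_{n\to\infty}\EE\left[\mu^\lambda_{G_n}\Big(\sum_{e\in\d R_n}X_e\Big)\right]=\int\Dcal_r[\bm^{(\lambda)}]\,d\rho([G,r]).$$
Run $t$ synchronous BP iterations on $G_n$ starting from $\bm^0=\delta_0$; by the proof of Proposition~\ref{prop: unique fixed-point in finite graphs} the iterates satisfy the two-sided $\lru$-sandwich $\bm^{2t}\le_\lru\bm^{(\lambda)}\le_\lru\bm^{2t+1}$. The key point is that the messages incident to $R_n$ after $t$ iterations are a function only of the depth-$(t+1)$ neighborhood of $R_n$, and that on any finite tree BP is exact (proposition of Section~\ref{sec:msg}), so this truncated estimate is the actual Gibbs marginal on the corresponding tree neighborhood. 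Continuity of $\Dcal_v$ on $\tPcallc$ (Proposition~\ref{prop: continuity for log-concave inputs and limiting operator}) then transfers the $\lru$-sandwich into a numerical sandwich on $\EE[\Dcal_{R_n}[\bm^{(\lambda)}]]$, and local weak convergence $G_n\Rightarrow\rho$ together with this bounded-continuous-functional representation passes to the limit. Letting $t\to\infty$ tightens the sandwich, the limit being identified with $\int\Dcal_r[\bm^{(\lambda)}]\,d\rho$ via Proposition~\ref{prop: unique fixed-point in unimodular graphs}.

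Next I would interchange the outer $\lambda$-limit with the $n$-limit. Both $\mu^\lambda_{G_n}(\sum_{e\in\d R_n}X_e)$ and $\Dcal_{R_n}[\bm^{(\lambda)}]$ are non-decreasing in $\lambda$---the former by a direct stochastic-domination argument for the Gibbs family $(\mu^\lambda_G)_\lambda$, the latter by Propositions~\ref{prop: monotonicity in lambda} and~\ref{prop: limit of $0$ temperature} combined with $\lru$-monotonicity of $\Dcal_v$---and both are pointwise dominated by $b_{R_n}$, which is uniformly integrable in $n$ thanks to $\overline b(\rho)<\infty$ and local weak convergence. Monotone convergence then yields
$$\lim_{n\to\infty}\frac{2M_n}{|V_n|}=\lim_{\lambda\to\infty}\int\Dcal_r[\bm^{(\lambda)}]\,d\rho([G,r])=\int\Dcal_r[\bm^{(\infty)}]\,d\rho([G,r]),$$
the last equality by one more application of monotone convergence using $\bm^{(\lambda)}\uparrow\bm^{(\infty)}$ from Proposition~\ref{prop: limit of $0$ temperature}. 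The variational identification with $\inf_{\balpha=\Scal_G\circ\Scal_G(\balpha)}\int F_r(\balpha)\,d\rho$ is then precisely Proposition~\ref{prop: BP estimate in unimodular random graphs}.

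The main obstacle is Step~1: proving a uniform-in-$n$ form of weak spatial mixing at finite $\lambda$, so that the depth-$t$ BP sandwich really controls the true Gibbs marginal at $R_n$ on the loopy $G_n$. This is exactly what the $\lru$-machinery of Section~\ref{sec:local} supplies, since strict monotonicity of $\Dcal_v$ on log-concave inputs forces the sandwich $\bm^{2t}\le_\lru\bm^{(\lambda)}\le_\lru\bm^{2t+1}$ to collapse as $t\to\infty$ on every tree neighborhood, and log-concavity is preserved by $\Rcal$ throughout the iteration. The other technical nuisance is verifying that the bound by $b_{R_n}$ plus $\overline b(\rho)<\infty$ actually gives uniform integrability of $\mu^\lambda_{G_n}(\sum_{e\in\d R_n}X_e)$ along the converging sequence $G_n\Rightarrow\rho$; this follows the same pattern as Proposition~6 of~\cite{Lelarge12}.
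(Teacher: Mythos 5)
The paper does not actually prove this proposition; the appendix explicitly defers to \cite{bls11}, \cite{Salez11} and \cite{Lelarge12}. Your outline follows the right general strategy (interchange of the $n$- and $\lambda$-limits via BP approximation), but it has a genuine gap at the interchange step that monotone convergence alone cannot repair. Set $f_n(\lambda):=\EE_{R_n}\bigl[\mu^\lambda_{G_n}\bigl(\sum_{e\in\d R_n}X_e\bigr)\bigr]$, which is nondecreasing in $\lambda$ with $\lim_{\lambda\to\infty}f_n(\lambda)=2M_n/|V_n|$, and suppose as in your Step~1 that $f_n(\lambda)\to f(\lambda):=\int\Dcal_r[\bm^{(\lambda)}]\,d\rho$ for each fixed $\lambda$. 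Then $\liminf_n 2M_n/|V_n|\ge \sup_\lambda f(\lambda)=\int\Dcal_r[\bm^{(\infty)}]d\rho$ follows directly (pick $\lambda_0$ near the sup, use $f_n(\lambda_0)\le 2M_n/|V_n|$, and send $n\to\infty$). But the matching upper bound $\limsup_n 2M_n/|V_n|\le\sup_\lambda f(\lambda)$ does not follow from pointwise convergence plus monotonicity in $\lambda$ plus uniform integrability of $b_{R_n}$: the inequality $\limsup_n\sup_\lambda f_n(\lambda)\le\sup_\lambda\lim_n f_n(\lambda)$ is false in general (e.g.\ $f_n(\lambda)=\min(\lambda/n,1)$ has $\sup_\lambda f_n\equiv1$ while $\lim_n f_n\equiv0$). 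You need an \emph{additional} ingredient here, either a uniform-in-$n$ quantitative bound on $2M_n/|V_n|-f_n(\lambda)$ so that a diagonal $\lambda_n\to\infty$ works, or a direct combinatorial upper bound on $M(G_n)$ (an LP-dual / fractional-cover certificate, which is what the fixed points $\balpha=\Scal_G\circ\Scal_G(\balpha)$ encode via Proposition~\ref{prop: BP estimate in finite graphs}). This upper-bound argument is the substantive content of Proposition~6 in \cite{Lelarge12}, and it is missing from your proposal.

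A secondary issue is in Step~1 itself. The $\lru$-sandwich $\bm^{2t}\le_\lru\bm^{(\lambda)}\le_\lru\bm^{2t+1}$ controls the \emph{BP fixed point} on $G_n$, not the \emph{Gibbs marginal} $\mu^\lambda_{G_n}(\sum_{e\in\d R_n}X_e)$; on a loopy graph these are not equal. Closing this requires showing that the true Gibbs marginal on $G_n$, restricted to the depth-$t$ ball around $R_n$ (a.a.s.\ a tree), is itself sandwiched between the free-boundary and wired-boundary tree Gibbs marginals, which in turn agree with the BP iterates from $\delta_0$ and from the maximal message. The $\lru$-monotonicity of $\Rcal_\ore$ is indeed the correct tool, but it must be rephrased as monotonicity of the tree Gibbs marginal in the boundary condition, and one must note that the conditional law of $\mu^\lambda_{G_n}$ on $B_t(R_n)$ is a mixture of such boundary-conditioned tree measures. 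You gesture at this (``weak spatial mixing'') and claim it is ``exactly what the $\lru$-machinery supplies,'' but as written the proposal uses the sandwich on BP iterates, which is a statement about the wrong object; the bridge from BP iterates to the actual Gibbs marginal on the loopy graph needs to be made explicit.
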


\subsection{Galton-Watson trees}
The main theorem follows quite straightforwardly from Propositions~\ref{prop: BP estimate in unimodular random graphs} and \ref{prop: asymptotic correctness for unimodular random trees}. The missing steps are standard and can be found in \cite{Lelarge12}; they resemble much the computation done in the proof of Proposition~\ref{prop: BP estimate in unimodular random graphs}.
\begin{theorem*}[Maximum allocation for bipartite Galton-Watson limits; \ref{th: maximum allocation for bipartite Galton-Watson limits}]
Provided $\EE[W^A]$ and $\EE[W^B]$ are finite, the limit $\Mcal(\Phi^A,\Phi^B):=\lim_{n\to\infty}M(G_n)/|A_n|$ exists and equals
$$
\begin{array}{ll}
\Mcal(\Phi^A,\Phi^B)=&\inf
\left\{\EE\left[\min\left\{W^A,\sum_{i=1}^{D^A}X_i(C_i^A)\right\}\right]\right.\\
&+\left. \frac{\EE[D^A]}{\EE[D^B]}\EE\left[\left(W^B-\sum_{i=1}^{D^B}\left[W^B-\sum_{j\neq i}Y_j(C_j^B)\right]_0^{C_i^B}\right)^+\ind\left({W^B<\sum_{i=1}^{D^B}C_i^B}\right]\right)\right\}
\end{array}
$$
where for all $i$, $\left(X_i(c),Y_i(c)\right)_{c\in\N}$ is an independent copy of $\left(X(c),Y(c)\right)_{c\in\N}$, and the infimum is taken over distributions for $\left(X(c),Y(c)\right)_{c\in\N}$ satisfying the RDE
\begin{align*}
Y(c)=\left\{\left[\widetilde W^A-\sum_{i=1}^{\widetilde D^A}X_i(\widetilde C_i^A)\right]_0^c\Bigg|C_0^A=c\right\}; X(c)=\left\{\left[\widetilde W^B-\sum_{i=1}^{\widetilde D^B}Y_i(\widetilde C_i^B)\right]_0^c\Bigg|C_0^B=c\right\}.
\end{align*}
\end{theorem*}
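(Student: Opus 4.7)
The plan is to deduce the theorem by specialising Proposition~\ref{prop: asymptotic correctness for unimodular random trees} to the bipartite Galton-Watson limit, and then rewriting the variational formula $\inf_{\balpha=\Scal_G\circ\Scal_G(\balpha)}\int F_r(\balpha)d\rho([G,r])$ in the language of Recursive Distributional Equations. The first step is essentially bookkeeping: the sequence $(G_n)$ converges locally weakly to the bipartite Galton-Watson tree $G=(A\cup B,E)$ described by $(\Phi^A,\Phi^B)$, which is a unimodular measure $\rho$ with $\overline b(\rho)=\max(\EE[W^A],\EE[W^B])<\infty$, so Proposition~\ref{prop: asymptotic correctness for unimodular random trees} applies. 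One just has to switch normalisation from $|V_n|$ to $|A_n|$, using that $|B_n|/|A_n|\to\EE[D^A]/\EE[D^B]$ by the consistency relation on edge capacities.

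The heart of the proof is to convert the tree fixed-point equation $\balpha=\Scal_G\circ\Scal_G(\balpha)$ on the bipartite Galton-Watson tree into the stated RDE. Because of the Markovian structure of $G$, the distribution of $\alpha_\ore$ for an edge $\ore$ pointing towards an $A$-vertex (resp.~$B$-vertex) depends only on the capacity $c$ of that edge, and we define $Y(c)$ (resp.~$X(c)$) as this conditional law. The recursion $\alpha_\ore=\Scal_\ore(\bScal_\orde(\balpha))=[b_v-\sum_{\ore'\in\orde}\Scal_{\ore'}(\balpha)]_0^{c_e}$ applied at a vertex in $A$ reached through an edge of capacity $c$ involves the off-spring structure $\widetilde\Phi^A(\cdot|c)$, yielding exactly
\[
Y(c)\stackrel{d}{=}\left\{\left[\widetilde W^A-\sum_{i=1}^{\widetilde D^A}X_i(\widetilde C_i^A)\right]_0^c\;\Big|\;C_0^A=c\right\},
\]
and symmetrically for $X(c)$. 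Every unimodular fixed-point $\balpha$ whose law is shift-invariant on the tree thus corresponds to a distributional solution of the RDE, and conversely any distributional solution can be propagated down the Galton-Watson tree to define a tree-indexed fixed-point.

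Next, the integral $\int F_r(\balpha)d\rho([G,r])$ is rewritten by conditioning on whether the root $r$ belongs to $A$ or $B$. When $r\in A$, writing $F_r(\balpha)=\min(W^A,|\balpha_{\ordv}|)+(W^A-|\balpha_{\oldv}|)^+$, the first term contributes $\EE\bigl[\min\{W^A,\sum_{i=1}^{D^A}X_i(C_i^A)\}\bigr]$ since the $D^A$ incoming messages are i.i.d.~copies of $X(C_i^A)$; the second term contributes zero on the $A$-side because the RDE ensures $|\balpha_{\oldv}|\ge b_v$ whenever the truncation at $W^A$ bites, so the $(\cdot)^+$ vanishes (this is the subtle point encoded in the use of $\Scal_\ore\circ\Scal_\orde$ and in Lemma~\ref{lemma: computation trick}). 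Symmetrically, when $r\in B$ only the $(W^B-|\balpha_{\oldv}|)^+$ term survives, and after another round of the RDE it gives exactly
\[
\EE\left[\left(W^B-\sum_{i=1}^{D^B}\left[W^B-\sum_{j\ne i}Y_j(C_j^B)\right]_0^{C_i^B}\right)^+\ind\Bigl(W^B<\sum_{i=1}^{D^B}C_i^B\Bigr)\right].
\]
Weighting the second contribution by $|B_n|/|A_n|\to\EE[D^A]/\EE[D^B]$ reproduces the stated formula.

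The main obstacle is the bookkeeping in the previous step: one has to check that the infimum over RDE solutions and the infimum over tree fixed-points $\balpha$ agree, and that the two-sided decomposition of $F_r$ really telescopes into the claimed two terms (rather than double-counting edges). This is exactly where the identity in Lemma~\ref{lemma: computation trick} combined with the MTP is invoked, mirroring the computation in the proof of Proposition~\ref{prop: BP estimate in unimodular random graphs}; having done it once in the unimodular setting, its translation to the Galton-Watson case is routine and is the content of the ``missing steps'' alluded to by the authors.
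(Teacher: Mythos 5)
Your high-level plan is the right one and matches the paper's: specialize Proposition~\ref{prop: asymptotic correctness for unimodular random trees} to the bipartite Galton-Watson limit, convert the tree fixed-point equation $\balpha=\Scal_G\circ\Scal_G(\balpha)$ into the RDE using the Markovian branching structure, and pass to the $|A_n|$-normalisation using the edge-count consistency relation. But the key step --- how the two summands of $F_v(\balpha)=\min(b_v,|\balpha_\ordv|)+(b_v-|\balpha_\oldv|)^+$ distribute between the $A$-side and the $B$-side of the final formula --- is not what you claim it is.

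You assert that the $(b_v-|\balpha_\oldv|)^+$ term vanishes at $A$-vertices and that the $\min$ term ``does not survive'' at $B$-vertices. Neither is true pointwise at a fixed point (e.g.\ the $\min$ term at a $B$-vertex is clearly not identically zero), and if it were true, your decomposition would produce the wrong constant: writing $\int F_rd\rho$ as the $\rho^A$-weighted $\min$ term plus the $\rho^B$-weighted $(\cdot)^+$ term, and then dividing by $2|A_n|/|V_n|$, leaves a spurious factor of $\tfrac12$ in front of both terms compared to the theorem's formula. The actual mechanism, implemented in the paper via Lemma~\ref{lemma: computing degree}, Lemma~\ref{lemma: computation trick} and the Mass-Transport Principle, is a \emph{redistribution}, not a vanishing: starting from $\int\Dcal_r[\bm^{(\infty)}]d\rho^A$, the sum over incoming edges that appears in $\Dcal_r$ is transported by the MTP to the neighbouring vertices, which are all in $B$, and it is only there that it simplifies (via Lemma~\ref{lemma: computation trick}) to $(b_v-|\bScal_\oldv(\bbeta)|)^+$. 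Both terms of $F_v$ survive at every vertex; what the MTP does is re-express $\int\Dcal_r d\rho^A$ as $\int\min(b_r,|\balpha_\ordv|)d\rho^A+\frac{\EE[D^A]}{\EE[D^B]}\int(b_r-|\balpha_\oldv|)^+d\rho^B$, with the correct weight $\EE[D^A]/\EE[D^B]$ and no factor of $\tfrac12$. This is a genuine gap: without carrying out the MTP transport as in the proof of Proposition~\ref{prop: BP estimate in unimodular random graphs} (now for $\rho^A$ rather than $\rho$), the bookkeeping does not close.

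A secondary point: you also need, as the paper records, the one-to-one correspondence between unimodular tree-indexed solutions of $\balpha=\Scal_G\circ\Scal_G(\balpha)$ and RDE solutions (Lemma~6 of \cite{Aldous05}). This is required to justify that the two infima agree; you mention it only implicitly under ``every unimodular fixed-point corresponds to a distributional solution and conversely,'' which is the right statement but should be cited or proved.
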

\begin{proof}
Propositions~\ref{prop: BP estimate in unimodular random graphs} and \ref{prop: asymptotic correctness for unimodular random trees} together give that
\begin{align*}
\lim_{n\to\infty}\frac{2M(G_n)}{|A_n|+|B_n|}=\inf_{\balpha=\Scal_G\circ\Scal_G(\balpha)}\int F_r(\balpha)d\rho([G,r])=\int\Dcal_r[\bm^{(\infty)}]d\rho([G,r])
\end{align*}

We introduce the probability measures $\rho^A$ and $\rho^B$ on $\Ucal$ by conditioning on the root being in $A$ or $B$: $\rho^A([G,r])=\rho([G,r])\ind(r\in A)\frac{\EE[D^A]+\EE[D^B]}{\EE[D^B]}$, and similarly for $\rho^B$.

For $\lambda\in\R^+$, applying the MTP to $\rho\in\Ucal$ with $f^A(G,r,v)=\frac{\sum_{x\in\N}xm_{\overrightarrow{vr}}^{(\lambda)}(x)\Rcal_{\overrightarrow{rv}}[\bm^{(\lambda)}](x)}{\sum_{x\in\N}m_{\overrightarrow{vr}}^{(\lambda)}(x)\Rcal_{\overrightarrow{rv}}[\bm^{(\lambda)}](x)}\ind(r\in A)$, we obtain
\begin{align*}
\int\Dcal_r[\bm^{(\lambda)}]d\rho^A([G,r])&=\frac{\EE[D^A]+\EE[D^B]}{\EE[D^B]}\int\sum_v f^A(G,r,v)d\rho([G,r])\\
&=\frac{\EE[D^A]+\EE[D^B]}{\EE[D^B]}\int\sum_v f^A(G,v,r)d\rho([G,r])\\
&=\frac{\EE[D^A]+\EE[D^B]}{\EE[D^B]}\int\sum_v f^B(G,r,v)d\rho([G,r])\\
&=\frac{\EE[D^A]}{\EE[D^B]}\int\Dcal_r[\bm^{(\lambda)}]d\rho^B([G,r])
\end{align*}

Letting $\lambda\to\infty$ yields $\int\Dcal_r[\bm^{(\infty)}]d\rho^A([G,r])=\int\Dcal_r[\bm^{(\infty)}]d\rho^B([G,r])$, which shows
\begin{align*}
\lim_{n\to\infty}\frac{M(G_n)}{|A_n|}=\int\Dcal_r[\bm^{(\infty)}]d\rho^A([G,r]).
\end{align*}

We then follow exactly the steps in the proof of Proposition~\ref{prop: BP estimate in unimodular random graphs} for $\rho^A$ instead of $\rho$. This gives
\begin{align*}
\int\Dcal_r[\bm^{(\infty)}]d\rho^A([G,r])=\inf_{\balpha=\Scal_G\circ\Scal_G(\balpha)}&\left\{\int\min(b_r,|\balpha_{\overrightarrow{\d r}}|)d\rho^A([G,r])\right.\\
&+\left.\frac{\EE[D^A]}{\EE[D^B]}\int(b_r-|\balpha_{\overleftarrow{\d r}}|)d\rho^B([G,r])\right\}
\end{align*}

As $G$ is an unimodular tree, for any vertex $v\in V$, all the components of $\balpha_\ordv$ can be chosen independently (as they are independent in $\balpha^{(\infty)}_\ordv$, which achieves the infimum). Then, for $\ore$ incoming to $v$, $\alpha_\ore$ is determined only from the subtree stemming from the tail of $\ore$; furthermore it satisfies $\alpha_\ore=\Scal_\ore\circ\Scal_\orde[\balpha]$. However, the distribution of the subtree at the tail of an $\ore'$ which is an input to $\Scal_\orde$ is the same as that of the subtree at the tail of $\ore$, by the two-step branching property of the bipartite Galton-Watson tree $G$. This implies that, for $\ore$ incoming to a root $r\in A$, $\alpha_\ore$ is solution of the two-step RDE given in the statement of the theorem. As detailed in Lemma~6 of \cite{Aldous05}, there is actually a one-to-one mapping between the solutions of $\balpha=\Scal_G\circ\Scal_G[\balpha]$ on a Galton-Watson tree $G$ and the solutions of the RDE considered here. This completes the proof.
\end{proof}

\subsection{Cuckoo hashing}
\begin{theorem*}[Threshold for $(k,l,r)$-orientability of $h$-uniform hypergraphs; Theorem~\ref{th: application to cuckoo hashing}]
Let $h,k,l,r$ be positive integers such that $k,l\geq r$, $(h-1)r\geq l$ and $k+(h-2)r-l>0$ (i.e. at least one of the inequalities among $k\geq r$ and $(h-1)r\geq l$ is strict). We define $\Phi^A$ and $\Phi^B_\tau$ by $(h,l,\{r\})\sim\Phi^A$ and $(\operatorname{Poi}(\tau h),k,\{r\})\sim\Phi^B_\tau$, and
\begin{align*}
\tau^*_{h,k,l,r}=\sup\left\{\tau:\Mcal(\Phi^A,\Phi^B_\tau)<l\right\}.
\end{align*}

Then,
\begin{align*}
\lim_{n\to\infty}\PP\left(H_{n,\lfloor\tau n\rfloor,h}\text{ is }(k,l,r)\text{-orientable}\right)=\left\{\begin{array}{ll}
 1\text{ if }\tau<\tau^*_{h,k,l,r}\\
 0\text{ if }\tau>\tau^*_{h,k,l,r}
\end{array}\right.
\end{align*}
\end{theorem*}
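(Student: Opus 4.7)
The plan is to translate $(k,l,r)$-orientability of the random hypergraph $H_{n,\lfloor\tau n\rfloor,h}$ into a maximum-allocation problem on a companion bipartite graph and then to read off the threshold directly from Theorem~\ref{th: maximum allocation for bipartite Galton-Watson limits}. First I would construct $G_n=(A_n\cup B_n,E_n)$ by letting $A_n$ index the $\lfloor\tau n\rfloor$ hyperedges, $B_n$ index the $n$ vertices, and placing an edge $ab\in E_n$ whenever $b$ is an endpoint of the hyperedge $a$. Attach the vertex-constraint $l$ to every $a\in A_n$, the vertex-constraint $k$ to every $b\in B_n$, and the edge-constraint $r$ to every edge. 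Interpreting the coordinate $x_{ab}$ of an allocation $\bx$ as the mark placed on the $b$-endpoint of the hyperedge $a$, the $(k,l,r)$-orientations of $H_{n,\lfloor\tau n\rfloor,h}$ are exactly the allocations of $G_n$ saturating the vertex-constraint $l$ at every $a\in A_n$. Since $M(G_n)\leq l\cdot|A_n|$ trivially, $H_{n,\lfloor\tau n\rfloor,h}$ is $(k,l,r)$-orientable if and only if $M(G_n)=l\cdot|A_n|$.

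Next I would identify the local weak limit of $G_n$. Standard random-hypergraph estimates (see e.g.~\cite{Kim08}) show that a uniformly random vertex of $B_n$ is asymptotically incident to $\operatorname{Poi}(\tau h)$ hyperedges, while a uniformly random element of $A_n$ is deterministically incident to $h$ vertices; by construction all vertex- and edge-constraints are the deterministic values $k$, $l$, $r$. Hence $G_n$ converges locally weakly to the bipartite Galton--Watson tree with root-laws $\Phi^A$ and $\Phi^B_\tau$ as given in the statement, and the consistency relation required in Section~\ref{section: main result} is trivially satisfied since every edge has capacity $r$. The structural hypotheses $k,l\geq r$, $(h-1)r\geq l$ and $k+(h-2)r-l>0$ exclude the degenerate regimes in which the RDE of Theorem~\ref{th: maximum allocation for bipartite Galton-Watson limits} would collapse (a single hyperedge must be $(l,r)$-orientable in isolation, and a single vertex must have non-trivial leftover capacity). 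Applying Theorem~\ref{th: maximum allocation for bipartite Galton-Watson limits} then yields $M(G_n)/|A_n|\to\Mcal(\Phi^A,\Phi^B_\tau)$.

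One direction of the threshold statement is now immediate: whenever $\Mcal(\Phi^A,\Phi^B_\tau)<l$, the ratio convergence forces $M(G_n)<l\cdot|A_n|$ whp, so $H_{n,\lfloor\tau n\rfloor,h}$ is not $(k,l,r)$-orientable whp. The reverse direction is where the main obstacle lies: in the regime where $\Mcal(\Phi^A,\Phi^B_\tau)=l$, Theorem~\ref{th: maximum allocation for bipartite Galton-Watson limits} only delivers $M(G_n)/|A_n|\to l$, which still permits a deficit of $o(|A_n|)$ unsaturated hyperedges. To upgrade this to the exact equality $M(G_n)=l\cdot|A_n|$ whp, I would follow the strategy of~\cite{Lelarge12}: combine concentration of $M(G_n)$ around its mean (via Azuma applied to the hyperedge-revealing martingale, whose increments are $O(h)$) with a local-surgery argument showing that in the Poisson hypergraph model the sub-hypergraphs able to obstruct global orientability are asymptotically negligible, so that any $o(|A_n|)$ residual deficit can be resolved by short augmenting swaps along bounded-length alternating paths that exist whp by short-cycle sparsity. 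This last step is the genuinely non-automatic ingredient: it is not implied by the allocation asymptotics of Theorem~\ref{th: maximum allocation for bipartite Galton-Watson limits} alone and relies crucially on the Poisson structure. For the unit-capacity case $r=1$ it is carried out in~\cite{Lelarge12}; the extension to arbitrary $r$ should go through with only cosmetic modifications to allow integer-valued marks on each edge.
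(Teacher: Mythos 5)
Your reduction to the allocation problem, the identification of the local weak limit, and the observation that $\tau>\tau^*_{h,k,l,r}$ gives $\lim M(G_n)/|A_n|<l$ and hence non-orientability whp, are all exactly as in the paper. You also correctly locate the genuinely hard step: upgrading $M(G_n)/|A_n|\to l$ to $M(G_n)=l|A_n|$ whp when $\tau<\tau^*$.

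However, your sketch of that hard step does not match a workable argument, and I think it actually goes in the wrong direction. You propose to resolve the $o(|A_n|)$ deficit ``by short augmenting swaps along bounded-length alternating paths that exist whp by short-cycle sparsity.'' But if $\bx$ is a \emph{maximum} allocation, by definition there are no augmenting alternating paths at all, of any length, from an uncovered hyperedge-vertex $v$; otherwise $\bx$ would not be maximum. The point is precisely the opposite: for each uncovered $v$ one forms the ``blocking'' subgraph $K(v)$ consisting of $v$ together with all alternating paths from $v$ (unsaturated edges at $A$-vertices, positive-weight edges at $B$-vertices), and one proves all $B$-vertices of $K(v)$ are covered, so $K(v)$ is a dead end. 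The paper then establishes that every vertex of $K(v)$ has degree $\geq 2$ and that among every $2r$ consecutive vertices on an alternating path at least one has degree $\geq 3$ in $K(v)$ --- this is where the conditions $k,l\ge r$, $(h-1)r\ge l$ and the strictness $k+(h-2)r-l>0$ are used, via the estimate $\tilde x_{e_{2(i+1)+1}}+\left(k+(h-2)r-l\right)\le \tilde x_{e_{2i+1}}$ along the path. This gives density $|E(K(v))|/|V(K(v))|\geq\frac{1}{1-1/(6r)}>1$, so $K(v)$ must occupy a linear fraction of the graph whp. One then couples $H_{n,p,h}$ to $H_{n,\tilde p,h}$ for an intermediate $\tilde\tau\in(\tau,\tau^*)$, removes $\Theta(n)$ hyperedges one at a time, and observes (Chernoff) that enough of these land in $\bigcup_v K(v)$ to drive the deficit $\operatorname{gap}_n=l|\tilde A_n|-M(\tilde G_n)=o(n)$ down to zero whp. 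Neither Azuma-type concentration nor short-cycle sparsity plays a role, and the $2r$-window density argument is a genuine extension, not a ``cosmetic modification'' of the $r=1$ case in \cite{Lelarge12}: for $r=1$ the edge weights are $\{0,1\}$ and the estimate is trivial, whereas for general $r$ the bookkeeping over $2r$ steps is exactly where the hypotheses on $(h,k,l,r)$ enter. As stated, your proposal leaves the reverse direction unproved.
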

\begin{proof}
For any $h$-uniform hypergraph $H_n$ on $n$ vertices, we let $G_n=(A_n\cup B_n,E_n)$ be the associated bipartite graph, where $B_n$ contains the vertices of $H_n$ and $A_n$ the hyperedges of $H_n$. Let $|B_n|=n$, and $|A_n|=m=\lfloor\tau n\rfloor$ for some $\tau$. First-of-all, it is clear by coupling that $\tau\mapsto\Mcal(\Phi^A,\Phi_\tau^B)$ as defined in Theorem~\ref{th: maximum allocation for bipartite Galton-Watson limits}, is a non-decreasing function. Let then $\tau>\tau^*_{h,k,l,r}$. Then, by Theorem~\ref{th: maximum allocation for bipartite Galton-Watson limits}, we have
\begin{eqnarray*}
\lim_{n\to\infty}\frac{M(G_n)}{|A_n|}<l,
\end{eqnarray*}
which immediately implies that $G_n$ is a.a.s. not $(k,l,r)$-orientable.

Let now $\tau<\tau^*_{h,k,l,r}$. According to Theorem~\ref{th: maximum allocation for bipartite Galton-Watson limits} again, we have $\lim_{n\to\infty}\frac{M(G_n)}{|A_n|}=l$ but there may still exist $\operatorname o(n)$ hyperedges which are not $(l,r)$-oriented. We will then rely on specific properties of $H_{n,m,h}$ to show that a.a.s. all hyperedges are $(l,r)$-oriented. We follow here a similar path as in \cite{Gao10,Lelarge12}. It is easier to work with a different model of hypergraphs, that we call $H_{n,p,h}$, and that is essentially equivalent to the $H_{n,\lfloor\tau n\rfloor,h}$ model \cite{Kim08}: each possible $h$-hyperedge is included independently with probability $p$, with $p=\tau h/\binom{n-1}{h-1}$.

We let $\tilde\tau$ be such that $\tau<\tilde\tau<\tau^*_{h,k,l,r}$, and consider the bipartite graph $\tilde G_n=(\tilde A_n\cup B_n,\tilde E_n)$ obtained from $H_{n,\tilde p,h}$ with $\tilde p=\tilde\tau h/\binom{n-1}{h-1}$. Consider a maximum allocation $\tilde\bx\in\N^{\tilde E_n}$ of $\tilde G_n$. We say that a vertex of $w\in\tilde A_n$ (resp. a vertex $w\in B_n$) is covered if $\sum_{e\in\d w}\tilde x_e=l$ (resp. $\sum_{e\in\d w}\tilde x_e=k$); we also say that an edge $e\in\tilde E_n$ is saturated if $\tilde x_e=c$.

Let $v$ be a vertex in $\tilde A_n$ that is not covered. We define $K(v)$ as the minimum subgraph of $\tilde G_n$ such that:
\begin{itemize}
\item $v$ belongs to $K(v)$;
\item all the unsaturated edges adjacent to a vertex in $\tilde A_n\cap K(v)$ belong to $K(v)$ (and thus their endpoints in $B_n$ also belongs to $K(v)$);
\item all the edges $e$ for which $\tilde x_e>0$ and that are adjacent to a vertex in $B_n\cap K(v)$ belong to $K(v)$ (and so do their endpoints in $\tilde A_n$).
\end{itemize}
The subgraph $K(v)$ defined in this way is in fact constitued of $v$ and all the paths starting from $v$ and alternating between unsaturated edges and edges $e$ with $\tilde x_e>0$ (we call such a path an alternating path). It is then easy to see that all the vertices in $B_n\cap K(v)$ must be covered, otherwise we could obtain a strictly larger allocation by applying the following change: take the path $(e_1,\ldots,e_{2t+1})$ between $v$ and an unsaturated vertex in $B_n\cap K(v)$; add $1$ to each $\tilde x_{e_i}$ for $i$ odd, and remove $1$ from each $\tilde x_{e_i}$ for $i$ even; all these changes are possible due to the way the edges in $K(v)$ have been chosen, and the resulting allocation has size larger by $1$ than $|\tilde\bx|$.

We will now show that the subgraph $K(v)$ is dense, in the sense that the average induced degree of its vertices is strictly larger than $2$. We first show that all the vertices in $K(v)$ have degree at least $2$. We have $(h-1)r\geq l$ and $v$ is not covered, hence $v$ has at least two adjacent edges in $\tilde G_n$ which are not saturated, thus the degree of $v$ in $K(v)$, written $\operatorname{deg}_{K(v)}v$, is at least $2$. Let $w$ be a vertex in $B_n\cap K(v)$. By definition, there is an edge $e\in\d w\cap K(v)$ through which $w$ is reached from $v$ in an alternating path, and $\tilde x_e<r$. Then, because $\sum_{e\in\d w}\tilde x_e=k$ and $k\geq r$ there must be another edge $e'$ adjacent to $w$ such that $\tilde x_{e'}>0$; such an edge belongs to $K(v)$ and thus $w$ is at least of degree $2$ in $K(v)$. Let now $w$ be a vertex in $\tilde A_n\cap K(v)$, $w\neq v$. By definition, there must exist an edge $e\in\d w\cap K(v)$ such that $\tilde x_e>0$. Because $(h-1)r\geq l$ and $\tilde x_e>0$ there must be another edge $e'$ adjacent to $w$ such that $\tilde x_{e'}<r$; $e'$ belongs to $K(v)$ and thus $\operatorname{deg}_{K(v)}w\geq2$.

Consider a path $\left(e_1=(v_1v_2),\ldots,e_t=(v_tv_{t+1})\right)$ in $K(v)$ such that $v_1\in\tilde A_n\cap K(v)$ and any two consecutive edges in the path are distinct. We will show that at least one vertex out of $2r$ consecutive vertices along this path must have degree at least $3$ in $K(v)$, by showing that $\tilde x_{e_{2(i+1)+1}}<\tilde x_{e_{2i+1}}$ provided $v_{2(i+1)}$ and $v_{2(i+1)+1}$ have degree $2$ in $K(v)$ for all $i$. $v_{2(i+1)}\in B_n\cap K(v)$ must be covered, so if $\operatorname{deg}_{K(v)}v_{2(i+1)}=2$ we must have $\tilde x_{e_{2(i+1)}}=k-\tilde x_{e_{2i+1}}$. Then, if $\operatorname{deg}_{K(v)}v_{2(i+1)+1}=2$, all the edges adjacent to $v_{2(i+1)+1}$ except $e_{2(i+1)}$ and $e_{2(i+1)+1}$ must be saturated, thus we must also have $(h-2)r+\tilde x_{e_{2(i+1)}}+\tilde x_{e_{2(i+1)+1}}\leq l$. This immediately yield $\tilde x_{e_{2(i+1)+1}}+\left\{k+(h-2)r-l\right\}\leq\tilde x_{e_{2i+1}}$, and thus $\tilde x_{e_{2(i+1)+1}}<\tilde x_{e_{2i+1}}$ as claimed. But $\tilde x_{e_{2i+1}}<r$ and so $\tilde x_{e_{2i+2r+1}}\leq-1$ if the hypothesis that all the vertices encountered meanwhile have degree $2$ in $K(v)$ is correct, which is thus not possible. Note that we did not need to assume that the path considered was vertex-disjoint, hence it is not possible that $K(v)$ is reduced to a single cycle.

We will now count vertices and edges of $K(v)$ in a way that clearly shows that the number of edges in $K(v)$ is at least $\gamma$ times its number of vertices, with $\gamma>1$. We can always see $K(v)$ as a collection $P$ of edge-disjoint paths, with all vertices interior to a path of degree $2$ in $K(v)$ and the extremal vertices of a path having degree at least $3$ in $K(v)$. To form $K(v)$ we would simply need to merge the extremal vertices of some of these paths. We have shown before that each path in $P$ has at most $2r$ vertices. Let $p=\left(e_1=(v_1v_2),\ldots,e_t=(v_tv_{t+1})\right)$ be a path in $P$, we let $\theta_E(p)=t$ be the number of edges in $p$ and $\theta_V(p)=\sum_{e_i\in p}\frac{1}{\operatorname{deg}_{K(v)}v_i}+\frac{1}{\operatorname{deg}_{K(v)}v_{i+1}}$ be a partial count of the vertices in $p$ (all the interior vertices are counted as $1$ but the extremal vertices are only partially counted in $\theta_V(p)$, as they belong to many different paths). We have $\theta_V(p)=t-1+\frac{1}{\operatorname{deg}_{K(v)}v_1}+\frac{1}{\operatorname{deg}_{K(v)}v_{t+1}}\leq t-1+\frac{2}{3}$. Hence,
\begin{eqnarray*}
\frac{\theta_E(p)}{\theta_V(p)}\geq\frac{t}{t-1+\frac{2}{3}}\geq\frac{1}{1-\frac{1}{6r}}>1.
\end{eqnarray*}

Furthermore, it is easy to see that
\begin{eqnarray*}
\sum_{p\in P}\theta_E(p)&=&\text{ number of edges in }K(v),\\
\sum_{p\in P}\theta_V(p)&=&\text{ number of vertices in }K(v),
\end{eqnarray*}
which shows that the number of edges in $K(v)$ is at least $\gamma=\frac{1}{1-\frac{1}{6r}}>1$ times the number of vertices in $K(v)$.

Now, it is classical that any subgraph of a sparse random graph like $\tilde G_n$ with a number of edges equal to at least $\gamma>1$ times its number of vertices must contain at least a fraction $\epsilon>0$ of the vertices of $\tilde G_n$, with probability tending to $1$ as $n\to\infty$ (see \cite{Kim08,Gao10}). Therefore, $K(v)$ contains at least a fraction $\epsilon'>0$ of the vertices in $\tilde A_n$.

There exists a natural coupling between $H_{n,p,h}$ and $H_{n,\tilde p,h}$: we can obtain $H_{n,p,h}$ from $H_{n,\tilde p,h}$ by removing independently each hyperedge with probability $\tilde p-p>0$. This is equivalent to removing independently with probability $\tilde p-p$ each vertex in $\tilde A_n$. We let $\operatorname{gap}_n=l|\tilde A_n|-M(\tilde G_n)=\operatorname o(n)$. For any uncovered vertex $v$ in $\tilde A_n$ we can construct a subgraph $K(v)$ as above. If we remove a vertex $w$ in $\tilde A_n\cap K(v)$ for such a $v$, then either this vertex $w$ is itself uncovered, and then $\operatorname{gap}_n$ is decreased by at least $1$, or $w$ is covered and then it must belong to an alternating path starting from $v$ and we can construct a new allocation with size equal to that of $\tilde x$ and in which $w$ is uncovered and there is one more unit of weight on one of the edges adjacent to $v$, hence removing $w$ will also reduce $\operatorname{gap}_n$ by $1$. We proceed as follows: we attach independently to each hyperedge $a$ of $H_{n,\tilde p,h}$ a uniform $[0,1]$ random variable $U_a$. To obtain $H_{n,p,h}$ we remove all hyperedges $a$ such that $U_a\leq \tilde p-p$. This can be done sequentlially by removing at each step the hyperedge corresponding to the lowest remaining $U_a$. Then, at each step, assuming there are still uncovered vertices $v$ in $\tilde A_n$ we can consider the union $K$ of the subgraphs $K(v)$, which has size at least $\epsilon'\tau n$. Hence, with positive probability the hyperedge removed will decrease the value of $\operatorname{gap}_n$. By Chernoff's bound, the number of hyperedges removed is at least $\tau n\frac{\tilde p-p}{2}$ with high probability as $n\to\infty$, therefore $\operatorname{gap}_n$ will reach $0$ with high probability as $n\to\infty$ before we remove all the hyperedges that should be removed. Hence, $H_{n,p,h}$ (and thus $H_{n,\lfloor\tau m\rfloor,h}$) is $(k,l,r)$-orientable a.a.s.
\end{proof}

\end{document}